\documentclass{sl2art}

\makeatletter
\def\qed@warning{}
\makeatother
\usepackage{tikz-cd}
\usepackage{todonotes}

\usepackage{graphicx,subcaption}
\graphicspath{{fig/}}

\usepackage{enumerate}
\usepackage{mathtools}

\RequirePackage[noabbrev,capitalize]{cleveref}
\crefname{equation}{equation}{equations}
\Crefname{equation}{Equation}{Equations}

\newcommand{\ZZ}{\mathbb{Z}}

\newcommand{\CC}{\mathbb{C}}
\newcommand{\HH}{\mathbb{H}}

\newcommand{\set}[1]{\left\{ \def\given{\ \middle| \ }  #1 \right\}  }

\newcommand{\slg}{\mathrm{SL}_2(\CC)}
\newcommand{\pslg}{\mathrm{PSL}_2(\CC)}
\newcommand{\glg}{\mathrm{GL}_2(\CC)}
\newcommand{\hol}{\mathrm{Hol}}

\newcommand{\catformat}[1]{\mathsf{#1}}
\newcommand{\modc}[1]{{#1}\catformat{-Mod}}

\NewDocumentCommand{\chiset}{}{\mathsf{X}}

\newcommand{\rvar}[1]{\mathfrak{R}_{#1}}
\NewDocumentCommand{\svar}{m O{}}{\mathfrak{P}_{#1}^{#2}}
\newcommand{\avar}[1]{\mathfrak{A}_{#1}}
\newcommand{\bvar}[1]{\mathfrak{B}_{#1}}

\newcommand{\mer}{\mathfrak{m}}
\newcommand{\lon}{\mathfrak{l}}
\newcommand{\lonb}{\tilde{\mathfrak{l}}}
\newcommand{\comp}[1]{S^3 \setminus #1}
\newcommand{\extr}[1]{M_{#1}}

\NewDocumentCommand{\homol}{D[]{1} m D[]{} }{\operatorname{H}_#1(#2)^{#3}}
\NewDocumentCommand{\cohomol}{D[]{1} m D[]{} }{\operatorname{H}^#1(#2)^{#3}}

\newcommand{\crossrat}[4]{\left[ #1 : #2 : #3 : #4 \right]}

\newcommand{\Rmat}{\mathcal{R}}

\NewDocumentCommand{\qsl}{O{\xi}}{\mathcal{U}_{#1}(\mathfrak{sl}_2)}
\NewDocumentCommand{\weyl}{O{\xi}}{\mathcal{W}_{#1}}
\renewcommand{\hbar}{\hslash}

\NewDocumentCommand{\W}{}{\mathcal{W}}

\newcommand{\defeq}{:=}
\DeclareMathOperator{\tr}{tr}
\DeclareMathOperator{\id}{id}
\DeclareMathOperator{\op}{op}

\newcommand{\fib}[1]{\operatorname{Fib}_{#1}}

\NewDocumentCommand{\kinv}{ O{N}m }{\mathrm{V}_{#1}(#2)}
\NewDocumentCommand{\kinvname}{O{N}}{\mathrm{V}_{#1}}

\hyphenation{
  com-ple-ments
  con-ju-ga-cy
  con-ju-ga-tion
  com-pu-ta-tion
  con-struc-tion
  con-ven-tion
  con-ven-tions
  diff-i-cul-ty
  de-comp-o-si-tion
  de-vel-op-ing
  e-qua-tion
  e-qua-tions
  ge-o-me-tri-cal-ly
  hy-per-bo-lic
  math-e-ma-tic-a
  math-e-mat-ics
  po-ly-no-mi-al
  po-ly-no-mi-als
  quan-tum
  re-pre-sen-ta-tion
  re-pre-sen-ta-tions
  struc-ture
  struc-tures
  straight-for-ward
  sys-te-ma-tic-ally
  trans-for-ma-tion
  trans-for-ma-tions
  tri-ang-u-la-tion
  tri-ang-u-la-tions
  top-ol-o-gy
}

\declaretheorem[style=theorem]{proposition}
\declaretheorem[style=theorem,sibling=proposition]{theorem}
\declaretheorem[style=theorem,sibling=proposition]{lemma}

\declaretheorem[style=definition,sibling=proposition]{definition}
\declaretheorem[style=definition,sibling=proposition]{remark}
\declaretheorem[style=definition,sibling=proposition]{example}
\declaretheorem[style=definition,numberwithin=,title={Problem}]{problem}

\usepackage{biblatex}
\addbibresource{sources.bib}

\title{Hyperbolic structures on link complements, octahedral decompositions, and quantum \(\mathfrak{sl}_2\)}
\author{Calvin McPhail-Snyder}
\address{Duke University}
\email{calvin@sl2.site}

\subjclass{57K32, 57K10, 20G42}
\keywords{octahedral decomposition, biquandle, Kac-de Concini quantum group}

\begin{document}

\begin{abstract}
  Hyperbolic structures on link complements (equivalently, representations of the fundamental group into $\operatorname{SL}_2(\mathbb{C})$) can be described algebraically by using the \defemph{octahedral decomposition} determined by a link diagram.
  The decomposition (like any ideal triangulation) gives a set of \defemph{gluing equations} in \defemph{shape parameters} whose solutions are hyperbolic structures.
  We show that these equations can be obtained from Kashaev-Reshetikhin's braiding on the \defemph{Kac-de Concini quantum group} $\mathcal{U}_\xi(\mathfrak{sl}_2)$ at a root of unity $\xi$.
  This braiding gives coordinates on the $\operatorname{SL}_2(\mathbb{C})$ representation variety of a link and our work shows how to interpret these geometrically.
\end{abstract}

\maketitle

\tableofcontents
\section{Introduction}
For \(L\) a link in \(S^3\) a representation \(\rho : \pi_1(S^3 \setminus L) \to \slg\) is a (generalized) hyperbolic structure because the isometry  \(\operatorname{Isom}(\HH^3) = \pslg\) of hyperbolic \(3\)-space is double-covered by \(\slg\).
It is frequently useful to describe the hyperbolic structure by ideally triangulating \(S^{3} \setminus L\) and geometrizing the tetrahedra in terms of shape parameters \cite{ThurstonNotes}.

Given a diagram \(D\) of a link \(L\) one can define an ideal triangulation of \(S^{3} \setminus L\) minus two points called the \defemph{octahedral decomposition} \cite{ThurstonDNotes,Kashaev1995}.
This triangulation is convenient when working with link diagrams, as in the Reshetikhin-Turaev construction in quantum topology.
Understanding its geometry is frequently relevant to the Volume Conjecture.
\textcite{Hikami2015} showed how to determine the shape parameters of the octahedral decomposition in terms of cluster variables, with crossings of the diagram corresponding to cluster mutations.
\textcite{Kim2016,Kim2019} further studied the geometry of the octahedral decomposition and showed how to parametrize its hyperbolic structures using a simpler, more convenient set of variables; in our paper we call these \defemph{\(\chi\)-colorings} of link diagrams.
We can view \(\chi\)-colorings as a coordinate system on the \defemph{representation variety}, the space of representations \(\rho : \pi_{1}(\comp{L}) \to \slg\).

In this paper we show that \(\chi\)-colorings arise naturally in quantum topology.
\citeauthor{Kashaev2004} showed how to define a braiding on \(\qsl\) at \(q = \xi\) a root of unity.
This braiding is really a family of braidings parametrized by central characters of \(\qsl\), so it leads to invariants tangles with a choice of representation \(\rho\) of the complement into \(\slg\) \cite{Kashaev2005}.
Later \citeauthor{Blanchet2018} \cite{Blanchet2018} showed how to use these to define link invariants.
A significant technical problem in this construction is that one must express \(\rho\) in terms of a nonstandard coordinate system on the \(\slg\) representation variety, or in the language of \cite{Blanchet2018} a \defemph{generic biquandle factorization} of (the conjugation quandle of) \(\slg\).

In this paper we show that the generic biquandle factorization is natural in the context of the octahedral decomposition: by expressing \(\qsl\) as a subalgebra of a Weyl algebra (i.e.\ by using cluster-type coordinates) the factorized coordinates are essentially equivalent to the octahedral coordinates of \cite{Kim2016}.
This shows more directly the relationship between the BGPR invariant and hyperbolic geometry.
This perspective was originally discovered by the author and Resheitkhin in the course of our study of the BGPR braiding, and it proved crucial to resolving some normalization ambiguities in the braiding \cite{McPhailSnyderAlgebra}.
The holonomy \(R\)-matrices considered there were subsequently used to a define an (algebraic) quantization of the complex Chern-Simons invariant of a tangle exterior \cite{McPhailSnyderVolume}.
Again, the connection to octahedral decompositions and hyperbolic geometry is key.
Subsequent work has shown how to derive \(\chi\)-colorings directly from the Wirtinger presentation of the fundamental group \cite{McphailSnyder2024octahedralcoordinateswirtingerpresentation}.

The goal of this paper is to explain a connection between hyperbolic knot theory and the algebraic theory of quantum groups.
As such we have included standard background material on both topics to try to make the paper accessible to readers familiar with only one of these areas.

\subsection*{Plan of the paper}
\begin{itemize}
  \item In \Cref{sec:shaped-tangles} we define \(\chi\)-colorings of tangle diagrams and their associated representations.
  \item In \Cref{sec:quantum-groups} we explain the connection to quantum groups and cluster algebras: the colors \(\chi\) can be interpreted as characters on a central Hopf subalgebra of \(\qsl\).
  \item In \Cref{sec:geometry} we explain in detail how \(\chi\)-colorings determine hyperbolic structures on the octahedral decomposition.
    Most of these results were previously shown by \citeauthor{Kim2016} \cite{Kim2016,Kim2019}.
    We re-derive them in our conventions to make the paper self-contained.
  \item In \Cref{sec:gluing} we explain how in practice one can eliminate half the variables in the \(\chi\)-colorings, recovering the segment and region equations of \cite{Kim2016}.
    As an example, we compute all \(\chi\)-colorings of \((2,2n+1)\)-torus knots inducing to irreducible holonomy representations.
\end{itemize}

\subsection*{Acknowledgements}
I would like to thank Ian Agol for several helpful conversations about hyperbolic knot theory and Seokbeom Yoon for informing me about the recurrences in \cref{rem:twist-different-eigvals}, Matthias Goerner for helping me with some subtleties about decorations and the anonymous referee for helpful comments on the structure of the paper and pointing out some technical issues.

Some of the work in this paper was carried out while I was a visiting scholar at UNC Chapel Hill and I would like to thank the Mathematics Department and David E.\ V.\ Rose for their hospitality.

\section{\texorpdfstring{\(\chi\)}{χ}-colorings of link diagrams}
\label{sec:shaped-tangles}

We begin by defining \(\chi\)-colorings of link diagrams and their associated holonomy representations.

\subsection{Basic definitions}
\begin{definition}
  Let \(L\) be a link in \(S^3\) and \(D\) a diagram of \(L\).
  We assume all link diagrams are oriented.
  Thinking of \(D\) as a decorated \(4\)-valent graph \(G\) embedded in \(S^2\), the \defemph{segments} of \(D\) are the edges%
  \note{%
    Usually these are called the ``edges'' of the diagram, but we do not want to confuse them with edges of ideal polyhedra.
  }
  of \(G\).
  A \defemph{region} of a diagram is a connected component of the complement of \(G\), equivalently a vertex of the dual graph of \(G\).
\end{definition}
  For example, \cref{fig:figure-eight-labeled} shows an (oriented) diagram with the segments labeled.
\begin{marginfigure}
\begingroup%
  \makeatletter%
  \providecommand\color[2][]{%
    \errmessage{(Inkscape) Color is used for the text in Inkscape, but the package 'color.sty' is not loaded}%
    \renewcommand\color[2][]{}%
  }%
  \providecommand\transparent[1]{%
    \errmessage{(Inkscape) Transparency is used (non-zero) for the text in Inkscape, but the package 'transparent.sty' is not loaded}%
    \renewcommand\transparent[1]{}%
  }%
  \providecommand\rotatebox[2]{#2}%
  \newcommand*\fsize{\dimexpr\f@size pt\relax}%
  \newcommand*\lineheight[1]{\fontsize{\fsize}{#1\fsize}\selectfont}%
  \ifx\svgwidth\undefined%
    \setlength{\unitlength}{132.93800354bp}%
    \ifx\svgscale\undefined%
      \relax%
    \else%
      \setlength{\unitlength}{\unitlength * \real{\svgscale}}%
    \fi%
  \else%
    \setlength{\unitlength}{\svgwidth}%
  \fi%
  \global\let\svgwidth\undefined%
  \global\let\svgscale\undefined%
  \makeatother%
  \begin{picture}(1,0.73142784)%
    \lineheight{1}%
    \setlength\tabcolsep{0pt}%
    \put(0,0){\includegraphics[width=\unitlength,page=1]{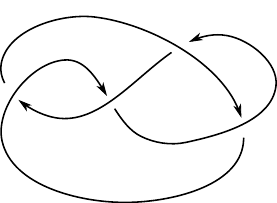}}%
    \put(0.18574584,0.53607107){\makebox(0,0)[lt]{\lineheight{1.25}\smash{\begin{tabular}[t]{l}$1$\end{tabular}}}}%
    \put(0.49378445,0.15159342){\makebox(0,0)[lt]{\lineheight{1.25}\smash{\begin{tabular}[t]{l}$2$\end{tabular}}}}%
    \put(0.86669712,0.6034901){\makebox(0,0)[lt]{\lineheight{1.25}\smash{\begin{tabular}[t]{l}$3$\end{tabular}}}}%
    \put(0.51156609,0.41261917){\makebox(0,0)[lt]{\lineheight{1.25}\smash{\begin{tabular}[t]{l}$4$\end{tabular}}}}%
    \put(0.13125042,0.24642853){\makebox(0,0)[lt]{\lineheight{1.25}\smash{\begin{tabular}[t]{l}$5$\end{tabular}}}}%
    \put(0.28308073,0.69587022){\makebox(0,0)[lt]{\lineheight{1.25}\smash{\begin{tabular}[t]{l}$6$\end{tabular}}}}%
    \put(0.82184746,0.44367152){\makebox(0,0)[lt]{\lineheight{1.25}\smash{\begin{tabular}[t]{l}$7$\end{tabular}}}}%
    \put(0.32516562,0.03348949){\makebox(0,0)[lt]{\lineheight{1.25}\smash{\begin{tabular}[t]{l}$8$\end{tabular}}}}%
  \end{picture}%
\endgroup%

  \caption{A diagram of the figure-eight knot, with the \(8\) segments indexed by \(1, \dots, 8\).}
  \label{fig:figure-eight-labeled}
\end{marginfigure}
In an oriented diagram all crossings are positive or negative, as shown in \cref{fig:crossing-types}.
Our preference is to read crossings left-to-right.
\begin{figure}
  \centering
\begingroup%
  \makeatletter%
  \providecommand\color[2][]{%
    \errmessage{(Inkscape) Color is used for the text in Inkscape, but the package 'color.sty' is not loaded}%
    \renewcommand\color[2][]{}%
  }%
  \providecommand\transparent[1]{%
    \errmessage{(Inkscape) Transparency is used (non-zero) for the text in Inkscape, but the package 'transparent.sty' is not loaded}%
    \renewcommand\transparent[1]{}%
  }%
  \providecommand\rotatebox[2]{#2}%
  \newcommand*\fsize{\dimexpr\f@size pt\relax}%
  \newcommand*\lineheight[1]{\fontsize{\fsize}{#1\fsize}\selectfont}%
  \ifx\svgwidth\undefined%
    \setlength{\unitlength}{283.55544662bp}%
    \ifx\svgscale\undefined%
      \relax%
    \else%
      \setlength{\unitlength}{\unitlength * \real{\svgscale}}%
    \fi%
  \else%
    \setlength{\unitlength}{\svgwidth}%
  \fi%
  \global\let\svgwidth\undefined%
  \global\let\svgscale\undefined%
  \makeatother%
  \begin{picture}(1,0.27643014)%
    \lineheight{1}%
    \setlength\tabcolsep{0pt}%
    \put(0,0){\includegraphics[width=\unitlength,page=1]{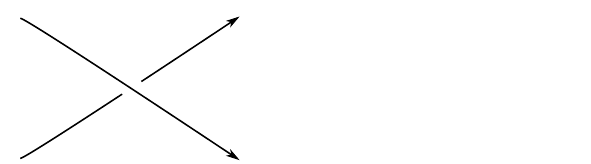}}%
    \put(-0.00351287,0.24426852){\makebox(0,0)[lt]{\lineheight{1.25}\smash{\begin{tabular}[t]{l}$1$\end{tabular}}}}%
    \put(-0.00351287,0.00621985){\makebox(0,0)[lt]{\lineheight{1.25}\smash{\begin{tabular}[t]{l}$2$\end{tabular}}}}%
    \put(0.41174984,0.24426852){\makebox(0,0)[lt]{\lineheight{1.25}\smash{\begin{tabular}[t]{l}$2'$\end{tabular}}}}%
    \put(0.41174984,0.00621985){\makebox(0,0)[lt]{\lineheight{1.25}\smash{\begin{tabular}[t]{l}$1'$\end{tabular}}}}%
    \put(0,0){\includegraphics[width=\unitlength,page=2]{crossing-types.pdf}}%
    \put(0.56563205,0.24427706){\makebox(0,0)[lt]{\lineheight{1.25}\smash{\begin{tabular}[t]{l}$1$\end{tabular}}}}%
    \put(0.56563205,0.00622839){\makebox(0,0)[lt]{\lineheight{1.25}\smash{\begin{tabular}[t]{l}$2$\end{tabular}}}}%
    \put(0.9808948,0.24427706){\makebox(0,0)[lt]{\lineheight{1.25}\smash{\begin{tabular}[t]{l}$2'$\end{tabular}}}}%
    \put(0.9808948,0.00622839){\makebox(0,0)[lt]{\lineheight{1.25}\smash{\begin{tabular}[t]{l}$1'$\end{tabular}}}}%
  \end{picture}%
\endgroup%

  \caption{Positive (left) and negative (right) crossings.}
  \label{fig:crossing-types}
\end{figure}
As shown there, we usually refer to the segments at a given crossing by \(1\), \(2\), \(1'\), and \(2'\).
We similarly refer to the regions touching the crossing as \(N\), \(S\), \(E\), and \(W\).
The labeling conventions are summarized in \cref{fig:crossing-regions}.
\begin{marginfigure}
\begingroup%
  \makeatletter%
  \providecommand\color[2][]{%
    \errmessage{(Inkscape) Color is used for the text in Inkscape, but the package 'color.sty' is not loaded}%
    \renewcommand\color[2][]{}%
  }%
  \providecommand\transparent[1]{%
    \errmessage{(Inkscape) Transparency is used (non-zero) for the text in Inkscape, but the package 'transparent.sty' is not loaded}%
    \renewcommand\transparent[1]{}%
  }%
  \providecommand\rotatebox[2]{#2}%
  \newcommand*\fsize{\dimexpr\f@size pt\relax}%
  \newcommand*\lineheight[1]{\fontsize{\fsize}{#1\fsize}\selectfont}%
  \ifx\svgwidth\undefined%
    \setlength{\unitlength}{113.47111416bp}%
    \ifx\svgscale\undefined%
      \relax%
    \else%
      \setlength{\unitlength}{\unitlength * \real{\svgscale}}%
    \fi%
  \else%
    \setlength{\unitlength}{\svgwidth}%
  \fi%
  \global\let\svgwidth\undefined%
  \global\let\svgscale\undefined%
  \makeatother%
  \begin{picture}(1,0.700154)%
    \lineheight{1}%
    \setlength\tabcolsep{0pt}%
    \put(0,0){\includegraphics[width=\unitlength,page=1]{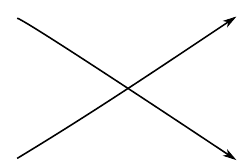}}%
    \put(-0.02199761,0.61980597){\makebox(0,0)[lt]{\lineheight{1.25}\smash{\begin{tabular}[t]{l}$1$\end{tabular}}}}%
    \put(-0.02199761,0.02494095){\makebox(0,0)[lt]{\lineheight{1.25}\smash{\begin{tabular}[t]{l}$2$\end{tabular}}}}%
    \put(1.01571146,0.61980597){\makebox(0,0)[lt]{\lineheight{1.25}\smash{\begin{tabular}[t]{l}$2'$\end{tabular}}}}%
    \put(1.01571146,0.02494095){\makebox(0,0)[lt]{\lineheight{1.25}\smash{\begin{tabular}[t]{l}$1'$\end{tabular}}}}%
    \put(0.48694249,0.45456569){\makebox(0,0)[lt]{\lineheight{1.25}\smash{\begin{tabular}[t]{l}$N$\end{tabular}}}}%
    \put(0.48694249,0.19018121){\makebox(0,0)[lt]{\lineheight{1.25}\smash{\begin{tabular}[t]{l}$S$\end{tabular}}}}%
    \put(0.68523089,0.28932538){\makebox(0,0)[lt]{\lineheight{1.25}\smash{\begin{tabular}[t]{l}$E$\end{tabular}}}}%
    \put(0.28865412,0.28932538){\makebox(0,0)[lt]{\lineheight{1.25}\smash{\begin{tabular}[t]{l}$W$\end{tabular}}}}%
  \end{picture}%
\endgroup%

  \caption{Segments and regions near a crossing.}
  \label{fig:crossing-regions}
\end{marginfigure}

\begin{definition}
  A \defemph{\(\chi\)-color} is a triple of nonzero complex numbers.
  We write \(\chiset = (\CC \setminus \{0\})^{3}\) for the set of \(\chi\)-colors.
  We usually abbreviate $(a, b, m) = \chi \in \chiset$, and when is assigned to a segment $i$ of a tangle diagram we write $\chi_i = (a_i, b_i, m_i)$.
\end{definition}

We can think of a \(\chi\)-color as:
\begin{itemize}
  \item An element of the group \(\slg^*\):
    \begin{align*}
      (a, b, m)
        &=
        \left(
          \begin{bmatrix}
            a & 0 \\
            (a - 1/m)/b & 1
          \end{bmatrix}
          ,
          \begin{bmatrix}
            1 & (a-m)b \\
            0 & a
          \end{bmatrix}
        \right)
        \\
        &\in
        \left\{
          \left(
            \begin{bmatrix}
              \kappa & 0 \\
              \phi & 1
            \end{bmatrix}
            ,
            \begin{bmatrix}
              1 & \epsilon \\
              0 & \kappa
            \end{bmatrix}
          \right)
          \middle |
          \kappa \ne 0
        \right\}
        = \slg^*
        \subseteq
        \operatorname{GL}_2(\CC)
        \times
        \operatorname{GL}_2(\CC)
    \end{align*}
    Here \(\slg^*\) is the \defemph{Poisson dual group} \cite[Section 0.1]{McPhailSnyderThesis} of \(\slg\).
  \item A character on a central subalgebra \(\mathcal{Z}_{0} \subset \qsl\) for \(\xi = e^{\pi i/ N}\) a root of unity; the full center \(\mathcal{Z}\) is a \(N\)-fold cover of \(\mathcal{Z}_{0}\).
  \item Data determining the shapes (complex dihedral angles) of the octahedral decomposition of a link diagram.
    In the language of \textcite{Kim2016} the \(b_i\) and \(m_i\) correspond to segment variables and \(a_i\) and \(m_i\) to (ratios of) region variables.
\end{itemize}

\begin{definition}
  \label{def:braiding}
  The \defemph{braiding} is the birational map \(B : \chiset^{2} \to \chiset^{2}\) given by $B(\chi_1, \chi_2) = (\chi_{2'}, \chi_{1'})$, where
  \begin{gather}
    \label{eq:a-transf-positive}
    \begin{aligned}
      a_{1'}
      &=
      a_1 A^{-1}
      \\
      a_{2'}
      &=
      a_2 A
      \\
      A &= 1 - \frac{m_1 b_1}{b_2} \left(1 - \frac{a_1}{m_1}\right)\left(1 - \frac{1}{m_2 a_2}\right)
    \end{aligned}
    \\
    \label{eq:b-transf-positive}
    \begin{aligned}
      b_{1'}
      &=
      \frac{m_2 b_2}{m_1}
      \left(
        1 - m_2 a_2 \left( 1 - \frac{b_2}{m_1 b_1} \right)
      \right)^{-1}
      \\
      b_{2'}
      &=
      b_1
      \left(
        1 - \frac{m_1}{a_1}\left( 1 - \frac{b_2}{m_1 b_1} \right)
      \right)
    \end{aligned}
    \\
    \label{eq:m-transf-positive}
    \begin{aligned}
      m_{1'}
      &= m_1
      &
      m_{2'}
      &= m_2
    \end{aligned}
  \end{gather}
  We think of $B$ as being associated to a positive crossing with incoming strands $1$ and $2$ and outgoing strands $2'$ and $1' $, as in \cref{fig:crossing-regions}.
  The inverse map $B^{-1}(\chi_1, \chi_2) = (\chi_{2'}, \chi_{1'})$ is given by
  \begin{gather}
    \label{eq:a-transf-negative}
    \begin{aligned}
      a_{1'}
      &=
      a_1 \tilde A^{-1}
      \\
      a_{2'}
      &=
      a_2 \tilde A
      \\
      \tilde A
      &=
      1 - \frac{b_2}{m_1 b_1}\left(1 - m_1 a_1 \right)\left(1 - \frac{m_2}{a_2}\right).
    \end{aligned}
    \\
    \label{eq:b-transf-negative}
    \begin{aligned}
      b_{1'}
      &=
      \frac{m_2 b_2}{m_1}
      \left(
        1 - \frac{a_2}{m_2} \left( 1 - \frac{m_1 b_1}{b_2} \right)
      \right)
      \\
      b_{2'}
      &=
      b_1
      \left(
        1 - \frac{1}{m_1 a_1} \left( 1 - \frac{m_1 b_1}{b_2} \right)
      \right)^{-1}
    \end{aligned}
    \\
    \label{eq:m-transf-negative}
    \begin{aligned}
      m_{1'}
      &= m_1
      &
      m_{2'}
      &= m_2
    \end{aligned}
  \end{gather}
\end{definition}

\(B\) is a birational map satisfying the braid relation
\[
  (B \times \id)
  (\id \times B)
  (B \times \id)
  =
  (\id \times B)
  (B \times \id)
  (\id \times B)
\]
and some related conditions so \((B, \chiset)\) is a \defemph{generically defined biquandle} as defined in \cite[Section 5]{Blanchet2018}.

\begin{definition}
  \label{def:shaping}
  We say that a tangle diagram $D$ is \defemph{\(\chi\)-colored} if its segments are assigned colors $\{\chi_i\}$ so that at each positive crossing (labeled as in \cref{fig:crossing-regions}) we have $B(\chi_1, \chi_2) = (\chi_{2'}, \chi_{1'})$, and similarly for negative crossings and $B^{-1}$.
  We also require that all of the components of \(\chi_{1'}\) and \(\chi_{2}\) lie in \(\CC^{\times}\).
  For example, this means that at a positive crossing we must assign \(\chi_1\) and \(\chi_2\) so that
  \[
    A = 
      1 - \frac{b_2}{m_1 b_1}\left(1 - m_1 a_1 \right)\left(1 - \frac{m_2}{a_2}\right)
  \]
  is not \(0\) or \(\infty\).
\end{definition}

\begin{marginfigure}
  \centering
\begingroup%
  \makeatletter%
  \providecommand\color[2][]{%
    \errmessage{(Inkscape) Color is used for the text in Inkscape, but the package 'color.sty' is not loaded}%
    \renewcommand\color[2][]{}%
  }%
  \providecommand\transparent[1]{%
    \errmessage{(Inkscape) Transparency is used (non-zero) for the text in Inkscape, but the package 'transparent.sty' is not loaded}%
    \renewcommand\transparent[1]{}%
  }%
  \providecommand\rotatebox[2]{#2}%
  \newcommand*\fsize{\dimexpr\f@size pt\relax}%
  \newcommand*\lineheight[1]{\fontsize{\fsize}{#1\fsize}\selectfont}%
  \ifx\svgwidth\undefined%
    \setlength{\unitlength}{145.25805473bp}%
    \ifx\svgscale\undefined%
      \relax%
    \else%
      \setlength{\unitlength}{\unitlength * \real{\svgscale}}%
    \fi%
  \else%
    \setlength{\unitlength}{\svgwidth}%
  \fi%
  \global\let\svgwidth\undefined%
  \global\let\svgscale\undefined%
  \makeatother%
  \begin{picture}(1,0.79602985)%
    \lineheight{1}%
    \setlength\tabcolsep{0pt}%
    \put(0,0){\includegraphics[width=\unitlength,page=1]{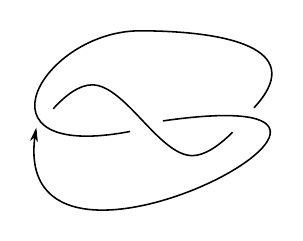}}%
    \put(0.03566389,0.17030626){\makebox(0,0)[lt]{\lineheight{1.25}\smash{\begin{tabular}[t]{l}$2$\end{tabular}}}}%
    \put(0.24367637,0.25541956){\makebox(0,0)[lt]{\lineheight{1.25}\smash{\begin{tabular}[t]{l}$4$\end{tabular}}}}%
    \put(0.56897119,0.21192431){\makebox(0,0)[lt]{\lineheight{1.25}\smash{\begin{tabular}[t]{l}$6$\end{tabular}}}}%
    \put(0.60584459,0.44884311){\makebox(0,0)[lt]{\lineheight{1.25}\smash{\begin{tabular}[t]{l}$5$\end{tabular}}}}%
    \put(0.04266923,0.54604185){\makebox(0,0)[lt]{\lineheight{1.25}\smash{\begin{tabular}[t]{l}$1$\end{tabular}}}}%
    \put(0.32519262,0.5285186){\makebox(0,0)[lt]{\lineheight{1.25}\smash{\begin{tabular}[t]{l}$3$\end{tabular}}}}%
  \end{picture}%
\endgroup%

  \caption{A diagram of the trefoil knot with labeled segments.}
  \label{fig:trefoil}
\end{marginfigure}

\begin{example}
  Consider the diagram of the trefoil in \cref{fig:trefoil} with the segments labeled by \(1, \dots, 6\). 
  A coloring \(\chi_i = (a_i, b_i, m_i), i = 1, \dots, 6\) of the diagram is valid if
  \[
    B(\chi_1, \chi_2) = (\chi_3, \chi_4)
    \text{, }
    B(\chi_3, \chi_4) = (\chi_5, \chi_6)
    \text{, and }
    B(\chi_5, \chi_6) = (\chi_1, \chi_2)
    .
  \]
  This immediately implies that \(m_1 = m_2 = \cdots = m_6 = m\); in general there is only one variable \(m\) for each component of the link.\note{Geometrically \(m\) is an eigenvalue of the holonomy of a meridian, and there is one conjugacy class of meridian per link component.}
  A family of solutions is given by
  \begin{align*}
    \chi_1 &= 
\left(\frac{b_{1} m - b_{2}}{b_{1} - b_{3}}, b_{1}, m\right)
    \\
    \chi_2 &=
\left(-\frac{b_{2}^{2} m^{2} - b_{2} b_{3} m + b_{1} b_{3}}{{\left(b_{1} m - b_{2}\right)} {\left(b_{2} m - b_{3}\right)}}, b_{2}, m\right)
    \\
    \chi_3 &=
\left(-\frac{b_{2} b_{3} m^{3} + b_{1} b_{3} m^{2} - b_{3}^{2} m^{2} - b_{1} b_{2} m + b_{1} b_{3}}{{\left(b_{2} m - b_{3}\right)} {\left(b_{1} - b_{3}\right)} m}, b_{3}, m\right)
    \\
    \chi_4 &=
\left(\frac{{\left(b_{2}^{2} m^{2} - b_{2} b_{3} m + b_{1} b_{3}\right)} m}{b_{2} b_{3} m^{3} + b_{1} b_{3} m^{2} - b_{3}^{2} m^{2} - b_{1} b_{2} m + b_{1} b_{3}}, \frac{{\left(b_{2} m - b_{3}\right)} b_{1}}{{\left(b_{2} m + b_{1} - b_{3}\right)} m}, m\right)
    \\
    \chi_5 &=
\left(-\frac{b_{2}^{2} m^{4} - b_{2} b_{3} m^{3} + b_{2} b_{3} m + b_{1} b_{3} - b_{3}^{2}}{{\left(b_{2} m - b_{3}\right)} {\left(b_{1} - b_{3}\right)} m}, \frac{b_{1} b_{2} m}{b_{2} m + b_{1} - b_{3}}, m\right)
    \\
    \chi_6 &=
\left(\frac{{\left(b_{2}^{2} m^{2} - b_{2} b_{3} m + b_{1} b_{3}\right)} m}{b_{2}^{2} m^{4} - b_{2} b_{3} m^{3} + b_{2} b_{3} m + b_{1} b_{3} - b_{3}^{2}}, -\frac{b_{1} b_{3}}{{\left(b_{2} m - b_{3}\right)} m}, m\right)
  \end{align*}
  where \(b_1, b_2, b_3\) can be freely chosen as long as none of the \(a_i\) or \(b_i\) are \(0\) or \(\infty\).
  It turns out that the choice of \(b_1, b_2, b_3\) does not affect the conjugacy class of the representation induced by the \(\chi\)-coloring.
  We show how to compute these solutions in \cref{sec:twist-region}.
\end{example}

At first glance the equations for all the \(a_i\) and \(b_i\) are difficult to solve.
We can simplify them by either eliminating the \(b_i\) and solving them in terms of the \(a_i\) or vice-versa.
For example, the solutions in the previous example were determined by first solving for the \(b_i\), then using them to determine the \(a_i\).
We discuss this in detail in \cref{sec:gluing}.
Alternatively a method to determine solutions directly from the Wirtinger presentation is given in \cite{McphailSnyder2024octahedralcoordinateswirtingerpresentation}.

\subsection{The holonomy of a \texorpdfstring{\(\chi\)}{χ}-colored diagram}

We now explain how a \(\chi\)-coloring determines a \(\slg\) representation of the link complement.
This uses a nonstandard presentation of the knot group as a groupoid.
In \cref{sec:quantum-groups} we explain how this groupoid arises from the representation theory of \(\qsl\), and in \cref{sec:geometry} we show it is natural when working with the face-pairing maps of the octahedral decomposition.

\begin{definition}
  Let \(D\) be a diagram of \(L\).
  The \defemph{fundamental groupoid} \(\Pi_1(D)\) of \(D\) has one object for each region of \(D\) and two generating morphisms \(x_j^{\pm}\) for each segment \(j\).
  These represent paths above and below the segment, as in \cref{fig:tangle-groupoid-generators}.
  Morphisms of \(\Pi_{1}(D)\) are formal composites of the generating morphisms subject to the following relations for each crossing
  \begin{equation}
    \label{eq:groupoid-relations}
    x_1^\pm x_2^\pm = x_{2'}^\pm x_{1'}^\pm
    \text{ and }
    \begin{cases}
      x_1^{-} x_{2}^{+}
      =
      x_{2'}^{+} x_{1'}^{-}
      &
      \text{ for a positive crossing, or}
      \\
      x_1^{+} x_{2}^{-}
      =
      x_{2'}^{-} x_{1'}^{+}
      &
      \text{ for a negative crossing.}
    \end{cases}
  \end{equation}
\end{definition}
\begin{marginfigure}
\begingroup%
  \makeatletter%
  \providecommand\color[2][]{%
    \errmessage{(Inkscape) Color is used for the text in Inkscape, but the package 'color.sty' is not loaded}%
    \renewcommand\color[2][]{}%
  }%
  \providecommand\transparent[1]{%
    \errmessage{(Inkscape) Transparency is used (non-zero) for the text in Inkscape, but the package 'transparent.sty' is not loaded}%
    \renewcommand\transparent[1]{}%
  }%
  \providecommand\rotatebox[2]{#2}%
  \newcommand*\fsize{\dimexpr\f@size pt\relax}%
  \newcommand*\lineheight[1]{\fontsize{\fsize}{#1\fsize}\selectfont}%
  \ifx\svgwidth\undefined%
    \setlength{\unitlength}{140.23033333bp}%
    \ifx\svgscale\undefined%
      \relax%
    \else%
      \setlength{\unitlength}{\unitlength * \real{\svgscale}}%
    \fi%
  \else%
    \setlength{\unitlength}{\svgwidth}%
  \fi%
  \global\let\svgwidth\undefined%
  \global\let\svgscale\undefined%
  \makeatother%
  \begin{picture}(1,0.37438403)%
    \lineheight{1}%
    \setlength\tabcolsep{0pt}%
    \put(0,0){\includegraphics[width=\unitlength,page=1]{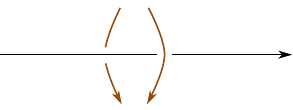}}%
    \put(0.57762114,0.26741715){\makebox(0,0)[lt]{\lineheight{1.25}\smash{\begin{tabular}[t]{l}$x_i^+$\end{tabular}}}}%
    \put(0.25672045,0.26741715){\makebox(0,0)[lt]{\lineheight{1.25}\smash{\begin{tabular}[t]{l}$x_i^-$\end{tabular}}}}%
    \put(0,0){\includegraphics[width=\unitlength,page=2]{tangle-groupoid-generators.pdf}}%
    \put(0.64180094,0.10696684){\makebox(0,0)[lt]{\lineheight{1.25}\smash{\begin{tabular}[t]{l}$i$\end{tabular}}}}%
  \end{picture}%
\endgroup%

  \caption{Generators of the fundamental groupoid \(\Pi_1(D)\) of a tangle diagram.}
  \label{fig:tangle-groupoid-generators}
\end{marginfigure}
\begin{marginfigure}
\begingroup%
  \makeatletter%
  \providecommand\color[2][]{%
    \errmessage{(Inkscape) Color is used for the text in Inkscape, but the package 'color.sty' is not loaded}%
    \renewcommand\color[2][]{}%
  }%
  \providecommand\transparent[1]{%
    \errmessage{(Inkscape) Transparency is used (non-zero) for the text in Inkscape, but the package 'transparent.sty' is not loaded}%
    \renewcommand\transparent[1]{}%
  }%
  \providecommand\rotatebox[2]{#2}%
  \newcommand*\fsize{\dimexpr\f@size pt\relax}%
  \newcommand*\lineheight[1]{\fontsize{\fsize}{#1\fsize}\selectfont}%
  \ifx\svgwidth\undefined%
    \setlength{\unitlength}{144bp}%
    \ifx\svgscale\undefined%
      \relax%
    \else%
      \setlength{\unitlength}{\unitlength * \real{\svgscale}}%
    \fi%
  \else%
    \setlength{\unitlength}{\svgwidth}%
  \fi%
  \global\let\svgwidth\undefined%
  \global\let\svgscale\undefined%
  \makeatother%
  \begin{picture}(1,0.67307788)%
    \lineheight{1}%
    \setlength\tabcolsep{0pt}%
    \put(0,0){\includegraphics[width=\unitlength,page=1]{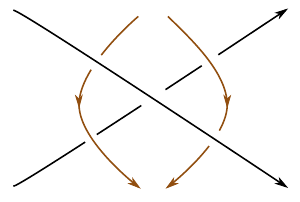}}%
    \put(-0.0032803,0.63811369){\makebox(0,0)[lt]{\lineheight{1.25}\smash{\begin{tabular}[t]{l}$1$\end{tabular}}}}%
    \put(-0.0032803,0.03553828){\makebox(0,0)[lt]{\lineheight{1.25}\smash{\begin{tabular}[t]{l}$2$\end{tabular}}}}%
    \put(0.97038872,0.04534311){\makebox(0,0)[lt]{\lineheight{1.25}\smash{\begin{tabular}[t]{l}$1'$\end{tabular}}}}%
    \put(0.97038872,0.64305345){\makebox(0,0)[lt]{\lineheight{1.25}\smash{\begin{tabular}[t]{l}$2'$\end{tabular}}}}%
    \put(0.07148831,0.32196937){\makebox(0,0)[lt]{\lineheight{1.25}\smash{\begin{tabular}[t]{l}$x_1^- x_2^+$\end{tabular}}}}%
    \put(0.79194551,0.32196937){\makebox(0,0)[lt]{\lineheight{1.25}\smash{\begin{tabular}[t]{l}$x_{2'}^+ x_{1'}^-$\end{tabular}}}}%
  \end{picture}%
\endgroup%

  \caption{Deriving the middle relation at a crossing.}
  \label{fig:tangle-groupoid-relations}
\end{marginfigure}

The standard way to study \(\pi_1(M_L)\) using a diagram of \(L\) is the Wirtinger presentation, which has one generator for each {arc} (arcs don't break at overcrossings, unlike segments) and one relation for each crossing.
We can think of \(\Pi_1(D)\) as using a greater number of more local generators.
This turns out to be more convenient when we discuss face pairings in \cref{sec:geometry}.

\begin{proposition}
  \label{prop:group-groupoid}
  For any diagram \(D\) of a link \(L\) the groupoid \(\Pi_1(D)\) is equivalent to the fundamental group \(\pi_1(M_L)\) of the link complement.
\end{proposition}
To use more abstract language, the claim is that the group \(\pi_1(M_L)\) is a skeleton of the groupoid \(\Pi_1(D)\).
A detailed proof is given in \cite[Section 3]{Blanchet2018}.
It is instructive to consider an example:
In \cref{fig:path-factorization}, we have expressed a path \(w_3 \in \pi_1(M_L)\) representing a generator of the Wirtinger presentation of \(\pi_1(M_L)\) in terms of elements of \(\Pi_1(D)\).
\begin{figure}
  \centering
\begingroup%
  \makeatletter%
  \providecommand\color[2][]{%
    \errmessage{(Inkscape) Color is used for the text in Inkscape, but the package 'color.sty' is not loaded}%
    \renewcommand\color[2][]{}%
  }%
  \providecommand\transparent[1]{%
    \errmessage{(Inkscape) Transparency is used (non-zero) for the text in Inkscape, but the package 'transparent.sty' is not loaded}%
    \renewcommand\transparent[1]{}%
  }%
  \providecommand\rotatebox[2]{#2}%
  \newcommand*\fsize{\dimexpr\f@size pt\relax}%
  \newcommand*\lineheight[1]{\fontsize{\fsize}{#1\fsize}\selectfont}%
  \ifx\svgwidth\undefined%
    \setlength{\unitlength}{388.34960175bp}%
    \ifx\svgscale\undefined%
      \relax%
    \else%
      \setlength{\unitlength}{\unitlength * \real{\svgscale}}%
    \fi%
  \else%
    \setlength{\unitlength}{\svgwidth}%
  \fi%
  \global\let\svgwidth\undefined%
  \global\let\svgscale\undefined%
  \makeatother%
  \begin{picture}(1,0.42270363)%
    \lineheight{1}%
    \setlength\tabcolsep{0pt}%
    \put(0,0){\includegraphics[width=\unitlength,page=1]{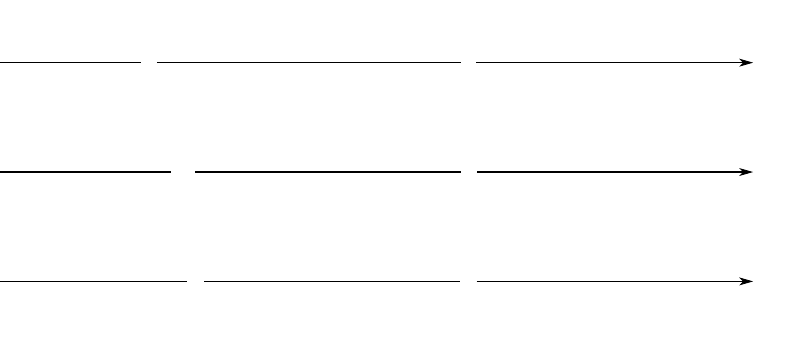}}%
    \put(0.9435563,0.34781885){\makebox(0,0)[lt]{\lineheight{1.25}\smash{\begin{tabular}[t]{l}$1$\end{tabular}}}}%
    \put(0.94355329,0.21006863){\makebox(0,0)[lt]{\lineheight{1.25}\smash{\begin{tabular}[t]{l}$2$\end{tabular}}}}%
    \put(0.9435563,0.07488117){\makebox(0,0)[lt]{\lineheight{1.25}\smash{\begin{tabular}[t]{l}$3$\end{tabular}}}}%
    \put(0,0){\includegraphics[width=\unitlength,page=2]{path-factorization.pdf}}%
    \put(0.16415622,0.00728742){\makebox(0,0)[lt]{\lineheight{1.25}\smash{\begin{tabular}[t]{l}$w_3$\end{tabular}}}}%
    \put(0.58903112,0.30663111){\makebox(0,0)[lt]{\lineheight{1.25}\smash{\begin{tabular}[t]{l}$x_1^+$\end{tabular}}}}%
    \put(0.59868735,0.17144363){\makebox(0,0)[lt]{\lineheight{1.25}\smash{\begin{tabular}[t]{l}$x_2^+$\end{tabular}}}}%
    \put(0.59868735,0.03625617){\makebox(0,0)[lt]{\lineheight{1.25}\smash{\begin{tabular}[t]{l}$x_3^+$\end{tabular}}}}%
    \put(0.42210433,0.11471753){\makebox(0,0)[lt]{\lineheight{1.25}\smash{\begin{tabular}[t]{l}$\left(x_3^-\right)^{-1}$\end{tabular}}}}%
  \end{picture}%
\endgroup%

  \caption{The path $w_3$ in $\pi_1(M_L)$ and the path $x_1^+ x_2^+ x_3^+ \left(x_3^- \right)^{-1} \left(x_2^+\right)^{-1} \left(x_1^+\right)^{-1}$ in $\Pi_1(D)$ are equivalent.}
  \label{fig:path-factorization}
\end{figure}

\begin{definition}
  \label{def:holonomy-representation-diagram}
  Let \(D\) be a \(\chi\)-colored link diagram.
  The \defemph{holonomy representation} of \(D\) is the representation
  \[
    \rho : \Pi_1(D) \to \glg
  \]
  given by
  \begin{align}
    \label{eq:holonomy-representation-diagram}
    \rho\left(x^+\right)
    &=
    \begin{bmatrix}
      a & 0 \\
      (a - 1/m)/b & 1
    \end{bmatrix},
    &
    \rho\left(x^-\right)
    &=
    \begin{bmatrix}
      1 & (a-m)b \\
      0 & a
    \end{bmatrix},
  \end{align}
  where the generators \(x^{\pm}\) are associated to a strand of \(D\) with color \(\chi = (a, b, m)\).
\end{definition}

\begin{remark}
  A link group \(\pi_1(M_L)\) has certain distinguished elements called meridians which correspond to paths around a single strand of \(L\).
  The generators of the Wirtinger presentation are meridians, and all meridians of the same component of \(L\) are conjugate.
  In terms of \(\Pi_1(D)\), the meridian around a strand with color \(\chi\) is conjugate to the matrix
  \begin{equation}
    \label{eq:meridian-factorization}
    \rho\left(x^+(x^-)^{-1}\right)
    =
    \begin{bmatrix}
      a & -(a- m)b \\
      (a-1/m)/b & m + m^{-1} - a
    \end{bmatrix}
  \end{equation}
  which has trace \(m + m^{-1}\).
  In general, the meridian is \emph{not} equal to \(\rho\left(x^{+} (x^{-})^{-1}\right)\), as shown in \cref{fig:path-factorization}.
\end{remark}

\begin{theorem}
  \label{lemma:holonomy-is-defined}
  The holonomy representation of \(\Pi_1(D)\) is well-defined and gives a representation \(\pi_1(M_L) \to \slg\) which we also denote \(\rho\).
\end{theorem}
\begin{proof}
  To make sure \(\rho\) is well-defined, we need to check that the braiding rules on the \(\chi_i\) are compatible with the relations \eqref{eq:groupoid-relations}, which is straightforward.
  Then both claims follow from \cref{prop:group-groupoid}.
\end{proof}

Conversely we say a representation \(\rho\) is \defemph{detected} by \(D\) if there is a \(\chi\)-coloring of \(D\) with holonomy representation \(\rho\).
It is natural to ask if every \(\rho\) is detected by \(D\).
The issue is that the set \(U\) of matrices of the form \eqref{eq:meridian-factorization} is a proper subset of \(\slg\) so one needs to ensure the appropriate conjugate of the image \(\rho(w_{i})\) of each meridian lies in \(U\).
It is easy to find examples where this fails, but they can always be avoided by a global conjugation of \(\rho\).

\begin{theorem}
  \label{thm:existence-blanchet}
  Every \(\slg\)-structure on \(L\) is conjugate to one detected by \(D\).
\end{theorem}

This follows from \cite[Theorem 2]{McphailSnyder2024octahedralcoordinateswirtingerpresentation}, which also gives an explicit condition for whether \(\rho\) is detected and a method to directly determine the \(\chi\)-coloring (in the language of \cite{McphailSnyder2024octahedralcoordinateswirtingerpresentation}, an \defemph{octahedral coloring}) from \(\rho\) and some auxiliary data called a shadow coloring.

\textcite{Blanchet2018} proved a similar theorem \cite[Theorem 5.5]{Blanchet2018} for a closely related representation of \(\Pi_{1}(D)\) given by 
\begin{align}
  \label{eq:holonomy-representation-diagram-BGPR}
  \rho\left(x^+\right)
  &=
  \begin{bmatrix}
    \kappa & 0 \\
    \phi & 1
  \end{bmatrix},
  &
  \rho\left(x^-\right)
  &=
  \begin{bmatrix}
    1 & \epsilon \\
    0 & \kappa
  \end{bmatrix}
\end{align}
for \(\kappa, \epsilon, \phi \in \mathbb{C}\) and \(\kappa \ne 0\).
These include the matrices of \eqref{eq:holonomy-representation-diagram} but are slightly more general; despite this difference their proof still works for \(\chi\)-colorings.
The key fact is that
\[
  U'
  =
  \set{
    \begin{bmatrix}
      \kappa & - \epsilon \\
      \phi & (1 - \epsilon \phi) / \kappa
    \end{bmatrix}
    \given
    \kappa \ne 0
  }
  \text{ and }
  U
  =
  \set{
    \begin{bmatrix}
      a & -(a- m)b \\
      (a-1/m)/b & m + m^{-1} - a
    \end{bmatrix}
    \given
    a, b, m \ne 0
  }
\]
are both Zariski dense subsets of \(\slg\).
The proof of \cite[Theorem 2]{McphailSnyder2024octahedralcoordinateswirtingerpresentation} uses a similar density argument.

\section{A connection to quantum groups}
\label{sec:quantum-groups}
Here we explain how the \(\chi\)-coordinates arise naturally in the representation theory of  \(\qsl\) at \(q = \xi = e^{\pi i / N}\) a root of unity.

\subsection{Quantum groups at roots of unity}
For any simple Lie algebra \(\mathfrak{g}\) the \defemph{quantum group} \(\mathcal{U}_{q}(\mathfrak{g})\) is a \(q\)-analogue of the universal enveloping algebra of \(\mathfrak{g}\).
Quantum groups have a number of interesting algebraic properties including the existence of an element \(\mathbf{R}\) of an appropriately completed tensor product \(\mathcal{U}_{q}(\mathfrak{g}) \mathbin{\widehat{\otimes}} \mathcal{U}_{q}(\mathfrak{g})\) called the \defemph{universal \(R\)-matrix}.
It satisfies nontrivial braid relations that are the key ingredient for constructing quantum invariants of knots and links \cite{Reshetikhin1990,Ohtsuki2001}.
This construction is universal in the sense that any choice of \(\mathcal{U}_{q}(\mathfrak{g})\)-module \(V\) determines a link invariant.
For example, if we choose \(\mathfrak{g} = \mathfrak{sl}_2\) and \(V\) the irreducible \(N\)-dimensional representation of \(\qsl[q]\) we get the \(N\)th colored Jones polynomial.

When \(q\) is not a root of unity, the representation theory of \(\mathcal{U}_q(\mathfrak{g})\) is quite similar to the classical representation theory of \(\mathfrak{g}\).
However, at a root of unity things become much more complicated and depend on exactly which form of the quantum group we use.
For the \defemph{Kac-de Concini form} \cite{de1991representations} of the quantum group, this is because the center gets much larger.

We describe this concretely for \(\qsl[q]\), which is the algebra over $\mathbb{C}[q, q^{-1}]$ with generators $K^{\pm 1}, E, F$ and relations
\[
  KK^{-1} = 1, \quad KE = q^2 EK, \quad KF = q^{-2} FK, \quad EF - FE = (q - q^{-1})(K - K^{-1}).
\]
It is a Hopf algebra with coproduct
\[
  \Delta(K) = K \otimes K, \quad \Delta(E) = E \otimes K + 1 \otimes E, \quad \Delta(F) = F \otimes 1 + K^{-1} \otimes F,
\]
counit
\[
\epsilon(K) = 1, \quad \epsilon(E) = \epsilon(F) = 0, \]
and antipode
\[
  S(E) = - EK^{-1}, \quad S(F) = - KF, \quad S(K) = K^{-1}.
\]
For \(q\) not a root of unity, the center \(Z(\qsl[q])\) is generated by the \defemph{Casimir}
\[
  \Omega = EF + q^{-1}K + q K^{-1} = FE + qK + q^{-1}K^{-1}.
\]

Let \(N \ge 2\) be an integer and set \(q = \xi = e^{\pi i / N}\).
There is now a central Hopf subalgebra \(\mathcal{Z}_{0}\) generated by \(K^{\pm N}\), \(E^N\), and \(F^N\), which we can interpret as the algebra of functions on an algebraic group.
Set
\[
  \slg^* = 
  \left\{
    \left(
      \begin{bmatrix}
        \kappa & 0 \\
        \phi & 1
      \end{bmatrix}
      ,
      \begin{bmatrix}
        1 & \epsilon \\
        0 & \kappa
      \end{bmatrix}
    \right)
    \middle |
    \kappa \ne 0
  \right\}
  \subset
  \glg \times \glg.
\]
Characters \(\chi : \mathcal{Z}_0 \to \CC\) correspond to points of \(\slg^*\) via
\[
  \chi \mapsto
  \left(
    \begin{bmatrix}
      \chi(K^N) & 0 \\
      \chi(K^N F^N) & 1
    \end{bmatrix},
    \begin{bmatrix}
      1 & \chi(E^N) \\
      0 & \chi(K^N)
    \end{bmatrix}
  \right) \in \slg^*
\]
The product is given by
\[
  (\chi_1 \cdot \chi_2)(x) \defeq (\chi_1 \otimes \chi_2)(\Delta(x))
\]
where \(\Delta\) is the coproduct of \(\qsl\).

\begin{theorem}
  The correspondence above gives an isomorphism \(\operatorname{Spec}(\mathcal{Z}_0) \to \slg^*\) of algebraic groups.
  The algebra \(\mathcal{Z}_0\) is large in the sense that \(\qsl/ \ker \chi\) has dimension \(N^3\) for any \(\chi\), and the whole center
  \[
    Z(\qsl) = \mathcal{Z}_0[\Omega]/(\text{polynomial relation})
  \]
  is generated by \(\mathcal{Z}_0\) and the Casimir element \(\Omega\), modulo a degree \(N\) polynomial relation given by a Chebyshev polynomial.
\end{theorem}
\begin{proof}
  See \cite[Chapter 0]{McPhailSnyderThesis} or \cite[Section 6]{Blanchet2018}.
  These results are due to work of \citeauthor{DeConcini1992} \cite{de1991representations,DeConcini1992}.
\end{proof}

This means that to construct quantum invariants from \(\qsl\) we must understand the group \(\slg^*\).
To see why, use Schur's Lemma: if \(V\) is any simple \(\qsl\)-module, the action of the central subalgebra \(\mathcal{Z}_0\) factors through some character \(\chi : \mathcal{Z}_0 \to \CC\).
Furthermore if \(V_1, V_2\) are two modules with characters \(\chi_1, \chi_2\), then their tensor product \(V_1 \otimes V_2\) will have central character \(\chi_1 \chi_2\).
One way to say this is that \(\modc{\qsl}\) is a \defemph{\(\slg^*\)-graded category}.\note{Strictly speaking, this is the case for the category of finite-dimensional \(\qsl\)-modules on which \(\mathcal{Z}_0\) acts diagonizably.}

One approach to dealing with this grading is to mostly eliminate it: if we take the quotient of \(\qsl\) by the relations \(K^{2N} = 1, E^N = F^N = 0\) we obtain the \defemph{small quantum group} \(\overline{\mathcal{U}}_\xi\).
This corresponds to considering only representations whose \(\mathcal{Z}_0\)-character is plus or minus the identity element of \(\slg^*\).
The category \(\modc{\overline{\mathcal{U}}_\xi}\) is not semisimple but by killing the so-called negligible morphisms (those with quantum trace \(0\)) we can obtain a \defemph{modular} category which has the necessary algebraic properties to construct a surgery TQFT \cite{Reshetikhin1991,Turaev2016}.
The corresponding link invariants are colored Jones polynomials of dimension \(1, 2, 3, \dots, N -1\) evaluated at the \(N\)th root of unity \(\xi^2\).

We want to go in a different direction and take full advantage of the \(\slg^*\)-grading.
This idea leads to \defemph{quantum holonomy invariants} \cite{Kashaev2005,Blanchet2018,McPhailSnyder2020,McPhailSnyderThesis,McPhailSnyderAlgebra,McPhailSnyderVolume}.
In the usual construction, picking a single \(\qsl\)-module \(V\) gives a link invariant; for example, choosing \(V\) to be the simple \(N\)-dimensional representation of \(\qsl\) gives the \(N\)th colored Jones polynomial at the root of unity \(\xi^2\) \cite{Murakami2001}.
For quantum holonomy invariants, we instead pick a \emph{family}%
\note{This is what \textcite{Blanchet2018} call a \defemph{representation} of a biquandle in a pivotal category.}
\(V_{\chi}\) of \(\qsl\)-modules indexed by points of \(\slg^*\).
We think of \(V_\chi\) as a deformation of \(V\) by the character \(\chi \in \slg^*\), and \(V\) is the case where \(\chi\) is the identity element.
The input to our construction is no longer a link diagram, but a link diagram with segments colored by elements of \(\slg^*\).
These must satisfy a braiding relation at the crossings, which is derived from the universal \(R\)-matrix but in a significantly different way than for generic \(q\).

\subsection{The braiding at a root of unity}

It is frequently said that \(\qsl[q]\) is a \defemph{quasitriangular} Hopf algebra, but strictly speaking this is false.
Instead this is true for a version \(\qsl[\hbar]\) defined over formal power series in \(\hbar\), where \(q = e^{\hbar}\).
Saying that the Hopf algebra \(\qsl[\hbar]\) is quasitriangular means in particular that it has a \defemph{universal \(R\)-matrix}
\begin{equation}
  \label{eq:universal-R-matrix}
  \mathbf{R} = q^{H \otimes H /2} \sum_{n=0}^{\infty} \frac{q^{n(n-1)/2}}{\{n\}!} (E \otimes F)^n \in \qsl[\hbar] \mathbin{\widehat{\otimes}} \qsl[\hbar]
\end{equation}
where \(\{n\} \defeq q^n - q^{-n}\), \(\{n\}! \defeq \{n\}\{n-1\} \cdots \{1\}\), and the tensor product is appropriately completed.
The key properties of \(\mathbf{R}\) are that it intertwines the coproduct and opposite coproduct
\[
  \mathbf{R} \Delta = \Delta^{\operatorname{op}} \mathbf{R}
\]
and satisfies the \defemph{Yang-Baxter relation}
\[
  \mathbf{R}_{12}  \mathbf{R}_{13}  \mathbf{R}_{23} = \mathbf{R}_{23}  \mathbf{R}_{13}  \mathbf{R}_{12}
\]
which is a version of the braid relation.
(Here \(\mathbf{R}_{12} = \mathbf{R} \otimes 1\) and so on.)
To make it look more like a braid relation, let \(V\) be a \(\qsl[\hbar]\)-module, write \(R\) for the action of \(\mathbf{R}\) on \(V \otimes V\), and set \(\tau(x \otimes y) = y \otimes x\).
Then \(c = \tau R\) is a map \(V \otimes V \to V \otimes V\) of \(\qsl[\hbar]\)-modules satisfying the braid relation
 \[
   (c \otimes \id)
   (\id \otimes c)
   (c \otimes \id)
   =
   (\id \otimes c)
   (c \otimes \id)
   (\id \otimes c).
\]
The map \(c\) is the braiding used to define quantum link invariants.

Usually we work with \(\qsl[q]\) even though the element \(\mathbf{R}\) involves power series in \(\hbar\).
For any finite-dimensional \(\qsl[q]\)-module \(V\) the action of \(\mathbf{R}\) converges and when suitably normalized can be written in terms of \(q\) only, which gives the \(R\)-matrices defining the colored Jones polynomials.
This works because when \(q\) is not a root of unity the elements \(E\) and \(F\) act nilpotently on any finite-dimensional representation.

Even when \(q = \xi\) is a root of unity one can choose modules for which \(E\) and \(F\) act nilpotently.
This leads to Kashaev's invariant \cite{Murakami2001} and to the ADO invariants \cite{Akutsu1992,Blanchet2016}.
However, because at least one of the off-diagonal entries in the holonomy is \(0\) these modules correspond to \(\slg\)-structures with reducible or abelian image; to capture geometrically interesting \(\slg\)-structures we need to allow \(E\) and \(F\) to act invertibly, that is to consider \defemph{cyclic} \(\qsl\)-modules.
Unfortunately the action of the \(R\)-matrix \eqref{eq:universal-R-matrix} on a tensor product of two cyclic modules diverges.
We can work around this by instead considering its conjugation action.
\textcite{Kashaev2004} showed that this still makes sense when specializing to a root of unity:

\begin{proposition}
  \label{thm:outer R-matrix}
  Consider the automorphism \(\Rmat\) of \(\qsl[\hbar]^{\otimes 2}\) defined by
  \[
    \Rmat(x) \defeq \mathbf{R} x \mathbf{R}^{-1}.
  \]
  Set \(W = 1 - K^{-N} E^N \otimes F^N K^N \in \qsl^{\otimes 2}\).
  Then \(\Rmat\) induces an algebra homomorphism
  \[
    \Rmat : \qsl^{\otimes 2} \to \qsl^{\otimes 2}[W^{-1}]
  \]
  characterized uniquely by
  \begin{align*}
    \Rmat(1 \otimes K)
    &=
    (1 \otimes K) (1 - \xi^{-1} K^{-1} E \otimes F K)
    \\
    \Rmat(E \otimes 1)
    &=
    E \otimes K
    \\
    \Rmat(1 \otimes F)
    &=
    K^{-1} \otimes F
  \end{align*}
  and \(
    \Rmat(\Delta(u))
    =
    \Delta^{\op}(u), \quad u \in \qsl.
  \)
\end{proposition}

For a tensor product \(V_1 \otimes V_2\) of \(\qsl\)-modules the \(R\)-matrix defining the braiding is no longer given by the action of \(\mathbf{R}\).
Instead we say a linear map \(R = R(V_{1}, V_{2}, V_{1'}, V_{2'})\)
\begin{equation}
  \label{eq:holonomy-R-matrix}
  R : V_1 \otimes V_2 \to V_{1'} \otimes V_{2'}
\end{equation}
is a \defemph{holonomy \(R\)-matrix} if it intertwines \(\Rmat\) in the sense that
\[
  R(x \cdot v) = \Rmat(x) \cdot R(v) \text{ for every } v \in V_1 \otimes V_2, x \in \qsl^{\otimes 2}.
\]
Unlike for an ordinary braiding \(V_{1'}\) will not be isomorphic to \(V_{1}\) and similarly for \(V_{2}\) and \(V_{2'}\)
To see this, consider the \(\mathcal{Z}_0\)-characters: if \(V_1, V_2\) have characters \(\chi_1, \chi_2\), then the action of any \(z_1 \otimes z_2 \in \mathcal{Z}_0^{\otimes 2}\) will satisfy
\begin{align*}
  (\chi_{1'} \otimes \chi_{2'})(z_1 \otimes z_2) R(v_1 \otimes v_2)
  &=
  z_1 \otimes z_2 \cdot R(v_1 \otimes v_2)
  \\
  &=
  R( \Rmat^{-1}(z_1 \otimes z_2) v_1 \cdot \otimes v_2)
  \\
  &=
  (\chi_1 \otimes \chi_2)\left(\Rmat^{-1}(z_1 \otimes z_2)\right) R(  v_1 \otimes v_2)
\end{align*}
so the characters \(\chi_{1'}, \chi_{2'}\) of the image have
\[
  \chi_{1'} \otimes \chi_{2'} = (\chi_1 \otimes \chi_2) \Rmat^{-1}.
\]
In fact, using the defining relations of \(\Rmat\) this uniquely defines \(\chi_{1'}\) and \(\chi_{2'}\), and the map
\[
  B(\chi_1,  \chi_2) = (\chi_{2'}, \chi_{1'})
\]
is braiding of \cite[Section 6]{Blanchet2018}.
It is birational because of the localization at \(W\).
As for \(\chi\)-colorings we can consider colorings of diagrams by \(\mathcal{Z}_{0}\)-characters related by \(B\) at the crossings.
These similarly define a holonomy representation via the matrices of \cref{eq:holonomy-representation-diagram-BGPR} and when trying to define quantum invariants using \(\Rmat\) one is lead to the following problem:

\begin{problem}
  \label{problem:BGPR}
  Given a representation \(\rho : \pi_{1}(\comp{L}) \to \slg\) and a diagram \(D\) of \(L\), find a coloring of the segments of \(D\) by \(\mathcal{Z}_{0}\)-characters inducing \(\rho\).
\end{problem}

Below we show that by restricting to \(\mathcal{Z}_{0}\)-characters arising from a Weyl algebra this problem is equivalent to finding \(\chi\)-colorings, hence to computing geometric structures on octahedral decompositions.

\subsection{Weyl algebras and \texorpdfstring{\(\chi\)}{χ}-colors}
\begin{definition}
  The \defemph{extended Weyl algebra} is the algebra \(\weyl[q]\) generated over \(\CC[q,q^{-1}]\) by a central invertible element \(z\) and invertible \(x,y\) subject to the relation
  \[
    xy = q^{2} yx.
  \]
\end{definition}
\begin{proposition}
  \label{prop:weyl-embedding}
  The map \(\phi : \weyl[q] \to \qsl[q]\) given by
  \begin{align*}
    K &\mapsto x 
      &
    E &\mapsto qy (z - x)
      &
    F &\mapsto y^{-1}(1 - z^{-1} x^{-1} )
  \end{align*}
  is an algebra homomorphism.
  It acts on the Casimir by
  \[
    \Omega \mapsto qz + (qz)^{-1}.
  \]
  At a \(2N\)th root of unity $q = \xi$ the center of $\weyl$ is generated by $x^N$, $y^N$, and $z$.
  The automorphism $\phi$ takes the center of $\qsl$ to the center of $\weyl$.
  Explicitly,
  \[
    \begin{aligned}
      \phi(K^N)
      &=
      x^N
      \\
      \phi(E^N)
      &=
      y^N(x^N - z^N)
      \\
      \phi(F^N)
      &=
      y^{-N}(1 - z^{-N} x^{-N}).
    \end{aligned} 
    \qedhere
  \]
\end{proposition}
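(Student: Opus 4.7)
The plan is to treat the four claims in turn: that $\phi$ is a homomorphism, that $\phi(\Omega) = qz + (qz)^{-1}$, that $Z(\weyl[\xi])$ is generated by $x^{\pm N}, y^{\pm N}, z^{\pm 1}$, and finally that the central generators of $\qsl$ map to the stated expressions.

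First I would verify the defining relations of $\qsl[q]$ directly from $xy = q^2yx$ (equivalently $yx = q^{-2}xy$, $x^{-1}y = q^{-2}yx^{-1}$) together with the centrality of $z$. The relations $KK^{-1}=1$, $KE = q^2 EK$, and $KF = q^{-2}FK$ are immediate single-line computations. The substantive relation $EF - FE = (q - q^{-1})(K - K^{-1})$ follows by conjugating $y^{\pm 1}$ past the middle of each product: one obtains
\[
  \phi(E)\phi(F) = q(z - q^{-2}x)(1 - z^{-1}x^{-1}), \qquad \phi(F)\phi(E) = q(1 - q^{-2}z^{-1}x^{-1})(z - x),
\]
and after expanding and subtracting, the $z^{\pm 1}$ terms cancel and the result collapses to $(q - q^{-1})(x - x^{-1})$. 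The Casimir identity is now immediate: reading off $\phi(EF) = qz - qx^{-1} - q^{-1}x + q^{-1}z^{-1}$ from the above, the $x^{\pm 1}$ terms cancel against $q^{-1}K + qK^{-1}$, leaving $\phi(\Omega) = qz + (qz)^{-1}$.

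For the center of $\weyl$ at $q = \xi$, I would use the PBW-like basis $\{x^a y^b z^c\}_{a,b,c \in \ZZ}$, so that any element admits a unique expansion $w = \sum_{a,b \in \ZZ} p_{a,b}(z^{\pm 1})\, x^a y^b$. Imposing $xw = wx$ forces each nonzero $p_{a,b}$ to satisfy $\xi^{-2b} = 1$, i.e.\ $N \mid b$; imposing $yw = wy$ similarly forces $N \mid a$. Since $x^{\pm N}$, $y^{\pm N}$, and $z^{\pm 1}$ manifestly commute with $x$ and $y$, this shows $Z(\weyl) = \CC[z^{\pm 1}][x^{\pm N}, y^{\pm N}]$.

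The main work is computing $\phi(E^N)$ and $\phi(F^N)$; this rests on the identity
\[
  (y(z - x))^N = y^N \prod_{k=0}^{N-1}\bigl(z - q^{2k} x\bigr),
\]
valid in $\weyl[q]$ for generic $q$. I would prove this by induction on $N$, using $xy^N = q^{2N} y^N x$ and the centrality of $z$ to compute $(z - x)y^N = y^N(z - q^{2N}x)$, which is exactly the factor appearing at the next stage. Specializing at $q = \xi$: since $\xi^2$ is a primitive $N$th root of unity, $\prod_{k=0}^{N-1}(T - \xi^{2k}) = T^N - 1$, and setting $T = zx^{-1}$ (valid since $x$ and $z$ commute) and multiplying by $x^N$ gives $\prod_{k=0}^{N-1}(z - \xi^{2k}x) = z^N - x^N$. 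Together with $\xi^N = -1$, this yields $\phi(E^N) = \xi^N y^N(z^N - x^N) = y^N(x^N - z^N)$. For $\phi(F^N)$, I would set $y' = y^{-1}$ and $v = z^{-1}x^{-1}$, observe that $vy' = q^{-2}y'v$ (the same $q$-commutation as between $x$ and $y$), and apply the analogous identity $(y'(1 - v))^N = (y')^N \prod_{k=0}^{N-1}(1 - q^{2k}v)$; the product $\prod_{k=0}^{N-1}(1 - \xi^{2k}v) = 1 - v^N$ follows from the same cyclotomic identity, giving $\phi(F^N) = y^{-N}(1 - z^{-N}x^{-N})$ with no extra scalar. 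The main obstacle is simply keeping the $q$-commutations straight in the inductive identity; once it is in hand, the rest is bookkeeping and the fact that all three images are central follows immediately since $x^{\pm N}, y^{\pm N}, z^{\pm 1}$ all lie in $Z(\weyl)$.
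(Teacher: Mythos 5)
Your proof is correct, and every computation checks out: the verification of the \(\qsl[q]\) relations, the cancellation giving \(\phi(\Omega) = qz + (qz)^{-1}\), the basis argument for \(Z(\weyl)\), and the \(q\)-binomial identity \((y(z-x))^N = y^N\prod_{k=0}^{N-1}(z-q^{2k}x)\) combined with \(\prod_{k=0}^{N-1}(T-\xi^{2k}) = T^N - 1\) and \(\xi^N = -1\). The paper itself supplies no proof of this proposition --- it is stated bare, with the presentation attributed to Faddeev--Kashaev and the author's thesis --- so your write-up is a genuine addition rather than a variant of an existing argument, and the route you take (direct verification of relations plus the cyclotomic factorization of the \(q\)-commuting product) is the standard and essentially only one. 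One harmless slip: with \(y' = y^{-1}\) and \(v = z^{-1}x^{-1}\) one has \(vy' = q^{2}y'v\), not \(q^{-2}y'v\); this does not affect your conclusion, since the identity you actually apply, \((y'(1-v))^N = (y')^N\prod_{k=0}^{N-1}(1-q^{2k}v)\), is the one consistent with the correct sign, and in any case the set \(\{\xi^{\pm 2k}\}_{k=0}^{N-1}\) of coefficients is the same either way, so the product still collapses to \(1 - v^N\).
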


\begin{remark}
  This map was obtained from one given by \citeauthor{Faddeev2000} \cite{Faddeev2000} in terms of a quantum cluster algebra with generators \(w_1, w_2, w_3, w_4\).
  These generators \(q^2\)-commute according to a certain quiver \cite[Figure 4]{Schrader2019} associated to the triangulation of a punctured disc given in \cref{fig:triangulated-braiding-0}.
  It is known \cite{Faddeev2000,Schrader2019} that this presentation explains the factorization of the \(R\)-matrix of \(\qsl[q]\) into four terms.
\end{remark}

As for \(\qsl\) consider the central subalgebra \(\mathcal{Z}_{0}^{\W}\) generated by \(x^{N}\), \(y^{N}\), and \(z^{N}\).
A character \(\chi : \mathcal{Z}_{0}^{\W} \to \mathbb{C}\) can be identified with the \(\chi\)-color
\[
  (a, b, m) =
  (
    \chi(x^{N}),
    \chi(y^{N}),
    \chi(z^{N})
  )
\]
As before central characters of \(\weyl\) are an \(N\)-fold cover of \(\mathcal{Z}_{0}^{\W}\) determined by a choice of \(N\)th root \(\chi(z)\) of \(m\).
The map \(\phi\) sends central characters of \(\weyl\) to central characters of \(\qsl\) via \(\chi \mapsto \chi \phi\).
Under this identification the color \(\chi = (a, b, m)\) corresponds to the element
\[
  \left(
    \begin{bmatrix}
      a & 0 \\
      (a - 1/m)/b & 1
    \end{bmatrix}
    ,
    \begin{bmatrix}
      1 & (a-m)b \\
      0 & a
    \end{bmatrix}
  \right)
\]
of \(\slg^{*}\) that we previously identified as the holonomy of \(\chi = (a,b,m)\).
We can similarly pull back the map \(\Rmat\) along \(\phi\) to give an automorphism \(\Rmat^{\W}\) of \(\weyl[q]^{\otimes 2}\) characterized by
\begin{align*}
  \Rmat^{\W}(x_1)&=x_1g, \\
  \Rmat^{\W}(x_2)&=g^{-1}x_2, \\
  \Rmat^{\W}(y_1^{-1})&=y_2^{-1}+(y_1^{-1}-z_2^{-1}y_2^{-1})x_2^{-1}, \\
  \Rmat^{\W}(y_2)&=\frac{z_1}{z_2}y_1+(y_2-{z_2}^{-1}y_1)x_1, \\
  \Rmat^{\W}(z_1) & = z_1\\
  \Rmat^{\W}(z_2) & = z_2
  \intertext{where}
  g&=1-x_1^{-1}y_1(z_1-x_1)y_2^{-1}(x_2-z_2^{-1}).
\end{align*}
In \cite{McPhailSnyderAlgebra} the map \(\Rmat^{\W}\) is used to compute \(R\)-matrix coefficients that are difficult to determine directly from \(\Rmat\).
This computation leads to the use of quantum dilogarithms and the connection with the Chern-Simons invariant discussed in \cite{McPhailSnyderVolume}.
Here we describe the connection to \(\chi\)-colorings and octahedral decompositions.

\begin{theorem}
  Identifying the set of central characters with \(\chiset\) as above the braiding map \(B\) of \cref{def:braiding} is characterized by 
  \[
    B(\chi_1, \chi_2) = (\chi_{2'}, \chi_{1'})
    \text{ exactly when }
    (\chi_{1'} \otimes \chi_{2'}) = (\chi_1 \otimes \chi_2) \Rmat^{-1}.
    \qedhere
  \]
\end{theorem}
\begin{proof}
  It is not hard to work out that the action of \(\Rmat^{W}\) on \(\mathcal{Z}_{0}^{W}\) is
  \begin{align*}
    \Rmat^W(x_1^N)&=x_1^NG , \\
    \Rmat^W(x_2^N)&=x_2^NG^{-1}, \\
    \Rmat^W(y_1^{-N})&= y_2^{-N}+\left(y_1^{-N} - \frac{y_2^{-N}}{z_2^{N}}\right)x_2^{-N}, \\
    \Rmat^W(y_2^N)&=\frac{z_1^N}{z_2^{N}}y_1^{N}+ \left(y_2^N - \frac{y_1^N}{z_2^N}\right)x_1^N, \\
    \intertext{where}
    G &= 1 + x_1^{-N} \frac{y_1^N}{y_2^N}(x_1^N - z_1^N)(x_2^N - z_2^{-N})
  \end{align*}
  We can then compute
  \begin{align*}
    b_{1'}^{-1} 
    &= \chi_{1'}(y^{-N})
    \\
    &=
    (\chi_1 \otimes \chi_2)
    \left(
      \frac{z_1^N}{z_2^N}y_2^{-N}+(y_1^{-N} - z_1^N y_2^{-N})x_2^{N}
    \right)
    \\
    &=
    \frac{m_1}{b_2 m_2}+\left(\frac{1}{b_1} - \frac{m_1}{b_2}\right)a_2
  \end{align*}
  which after some algebraic manipulation gives the expression for \(b_{1'}\) in \eqref{eq:b-transf-positive}.
  The other variables follow similarly.
\end{proof}

\section{The octahedral decomposition and hyperbolic geometry}
\label{sec:geometry}

Here we describe the geometric interpretation of \(\chi\)-colorings.
In particular, we recover the parametrization of hyperbolic structures on octahedral decompositions worked out by \textcite{Kim2016,Kim2019}.
We re-derive the relevant results in our conventions to make out paper more self-contained; the alternative of explaining how to translate back and forth would take as long and be less clear.

\subsection{Ideal octahedra and their shapes}
\label{sec:octahedra}
\citeauthor{ThurstonNotes} \cite{ThurstonNotes} introduced a way to combinatorially describe hyperbolic structures on link complements (more generally, on cusped \(3\)-manifolds) by using ideal triangulations.
One reference with more detail is the book written by \citeauthor{Purcell2020} \cite{Purcell2020}, which focuses on link complements .
The idea is to triangulate \(S^3 \setminus L\) with all the \(0\)-vertices ``at infinity'', that is lying on \(L\).

\begin{definition}
  An \defemph{ideal tetrahedron} is a tetrahedron \(\Delta \subseteq \HH^3\) whose \defemph{ideal vertices} lie on the boundary at infinity of \(\HH^3\).
  An \defemph{ideal triangulation} of \(M_L\) is a triangulation of \(M_L\) by ideal tetrahedra such that the ideal vertices all lie on \(L\).
\end{definition}

The hyperbolic structure on an ideal tetrahedron is summarized by a \defemph{shape parameter} \(z \in \CC\setminus \{0,1\}\) whose argument is the dihedral angle at a particular edge of the tetrahedron.
The shape parameters at the other edges are \(1/(1-z)\) and \(1 - 1/z\).
If edges \(e_i\) with shape parameters \(z_i\) are glued together in \(\mathcal{T}\), then the \defemph{gluing equation} for that edge is
\[
  \prod_{i} z_i^{k_{i}} = 1
\]
where \(z^{k_{i}}\) is one of \(z\), \(1/(1-z)\), or \(1-1/z\) depending on the combinatorics of the triangulation.
If the gluing equations for every edge are satisfied then we get a hyperbolic structure on the glued manifold.
There is an additional \defemph{completeness equation}  that ensures the restriction of \(\rho\) to the boundary tori of \(M_L\) has the right eigenvalues, or more geometrically that the induced structure on the noncompact manifold \(S^{3} \setminus L\) is complete.

One method to construct ideal triangulations of link complements systematically from diagrams is the \defemph{octahedral decomposition} \cite{ThurstonDNotes, Kashaev1995}, which decomposes the link complement into ideal octahedra.
We briefly summarize the treatment of \citeauthor{Kim2016} \cite{Kim2016}.
\begin{marginfigure}
  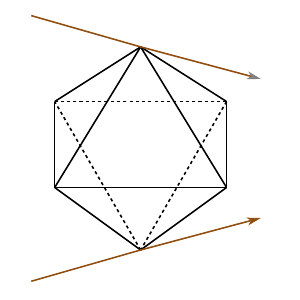
  \caption{An ideal octahedron at a positive crossing, viewed from the side. The ideal vertices \(P_+\) and \(P_+'\) are identified by pulling them above the diagram, as indicated by the grey curves.
  The strands of the link are shown in {\color{accent} gold}.}
  \label{fig:octahedron-positive}
\end{marginfigure}
Fix a diagram \(D\) of \(L\).
We put an ideal octahedron at each crossing of \(D\) with its top and bottom ideal vertices on the strands of the link, labeled as \(P_1\) and \(P_2\) in \cref{fig:octahedron-positive}.
There are four extra ideal vertices \(P_+\), \(P_-\), \(P_+'\), and \(P_-'\), which we pull above and below the diagram, in the process identifying \(P_+\) with \(P_+'\) and \(P_-\) with \(P_-'\).
The resulting simplicial complex is called a \defemph{twisted octahedron}.

The twisted octahedra have two types of edges to glue, which we call \defemph{vertical} and \defemph{horizontal} edges as in \cref{fig:octahedron-positive-edge}.
\begin{marginfigure}
  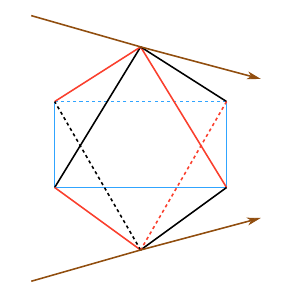
  \caption{
    Red {\color{slred} vertical} and blue {\color{slblue} horizontal} edges of an octahedron at a positive crossing.
    (Recall that the edges \(P_1 P_+\) and \(P_1 P_+'\) are glued in the twisted octahedron, and similarly for \(P_2 P_-\) and \(P_2 P_-'\).)
  }
  \label{fig:octahedron-positive-edge}
\end{marginfigure}
We can determine the gluing patterns of the horizontal edges by looking at the regions of the link diagram \(D\), while the gluing patterns of the vertical edges come from the arcs of \(D\); see \cite[Section 4]{Kim2016} for details.
The result is a decomposition of \(S^3 \setminus (L \cup \{P^+, P_-\})\) into ideal octahedra, where \(P_{\pm}\) are the two extra ideal points above and below the diagram.
These extra ideal points are not a problem in practice as their neighborhoods are balls that can be capped off canonically.
Hyperbolic structures on manifolds with extra ideal points can be understood using the \defemph{pseudo-developing maps} of \cite[Section 2]{Kim2016}.

\begin{figure}
  \centering
  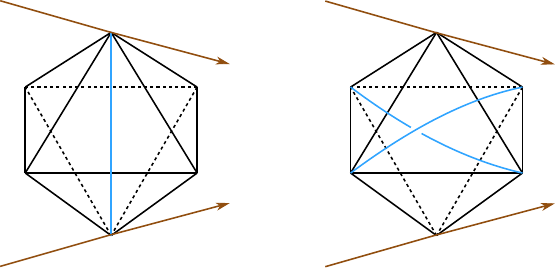
  \caption{The four-term and five-term decompositions of an octahedron.}
  \label{fig:octahedron-four-v-five}
\end{figure}

To assign hyperbolic structures to our octahedral decomposition we subdivide them into tetrahedra; there are two standard ways to do this called the \defemph{four-term} and \defemph{five-term} decompositions shown in \cref{fig:octahedron-four-v-five}.
We discuss the details of these in \cref{sec:four-term,sec:five-term}.
First we show that the shapes they assign to the octahedra glue together to give a well-defined hyperbolic structure.
Consider a twisted octahedron \(O\) at a crossing of \(D\) of sign \(\epsilon \in \set{1, -1}\).
We will choose shapes for the internal tetrahedra (in both decompositions) in such a way that the vertical and horizontal edges of \(O\) have shape parameters
\begin{align}
  \label{eq:edge-shapes-vertical}
  o_1 &= \frac{a_1}{m_1^{\epsilon}}
             &
  o_2 &= \frac{1}{m_2^{\epsilon} a_2}
             &
  o_{1'} &= \frac{m_1^{\epsilon}}{a_{1'}}
              &
  o_{2'} &= {m_2^{\epsilon} a_{2'}}
  \\
  \label{eq:edge-shapes-horizontal}
  o_{N} &= \frac{b_{2'}}{b_1}
             &
  o_{W} &= \frac{m_1 b_1}{b_2}
             &
  o_{S} &= \frac{m_2 b_{2}}{m_1 b_{1'}}
             &
  o_{E} &= \frac{m_2 b_{2'}}{b_{1'}}
\end{align}
Here by \(o_j\) we mean the shape of the vertical edge immediately below or above segment \(j\), and by \(o_{k}\) we mean the shape of the horizontal edge near region \(k\), as in \cref{fig:crossing-shapes-positive}.

\begin{figure}
  \centering
  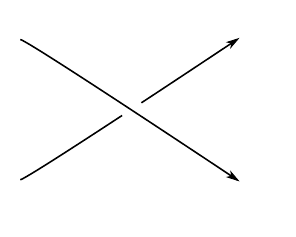
  \caption{
    Shapes of edges at a positive crossing.
    There are four horizontal edges at the four corners and four vertical edges below and above the four segments.
  }
  \label{fig:crossing-shapes-positive}
\end{figure}

\begin{marginfigure}
  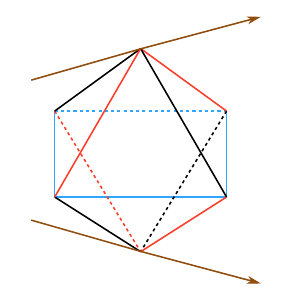
  \caption{
    Red {\color{slred} vertical} and blue {\color{slblue} horizontal} edges of an octahedron at a negative crossing.
    Notice that the vertical edges are indexed slightly differently than in the positive case.
  }
  \label{fig:octahedron-negative-edge}
\end{marginfigure}

\begin{theorem}
  \label{thm:gluing-holds}
  The shape assignments of \cref{eq:edge-shapes-vertical,eq:edge-shapes-horizontal} satisfy the gluing equations of the octahedral decomposition.
\end{theorem}
\begin{proof}
  We refer to \cite[Section 3.2]{Kim2016} for the derivation of these equations.
  There are two types to check: {region} equations and {segment} equations.
  We also need to check the behavior around the cusps (that is, the strands of the link) to make sure it matches the eigenvalues \(m_j\); doing this carefully requires considering the triangulation of the boundary induced by truncating our tetrahedra.

  The region equations \cite[eq.\@ 7]{Kim2016} say that the product of horizontal edge shapes around any region of the diagram must be \(1\).
  It is straightforward to see that this always holds, because the horizontal edge shapes are ratios of parameters assigned to the segments of the diagram, so checking this becomes a combinatorial fact about oriented planar graphs.
  We give two examples in \cref{fig:region-gluing-example}.

  The segment equations are more complicated because the vertical edges of the octahedra are glued together along the over-arcs and under-arcs of the diagram.
  This involves multiple segments at different crossings, so there is something nonlocal to check.
  A key observation of \citeauthor{Kim2016} is that the vertical edge gluing equations follow from a stronger \emph{local} condition at each segment, which they call the \defemph{\(m\)-hyperbolicity equations} \cite[eq.\@ 10]{Kim2016}.
  We give them in our conventions in \cref{def:m-hyperbolicity}.
  It is easy to check that the vertical edges at each segment satisfy the corresponding \(m\)-hyperbolicity equation and this implies that the gluing equations for the vertical edges of all the octahedra hold.

  As suggested by its name the \(m\)-hyperbolicity equation asserts that a segment with color \(\chi = (a, b, m)\) really has \(m\) as an eigenvalue of its holonomy; we discuss this further in \cref{sec:decorations}.
\end{proof}

\begin{figure}
  \centering
  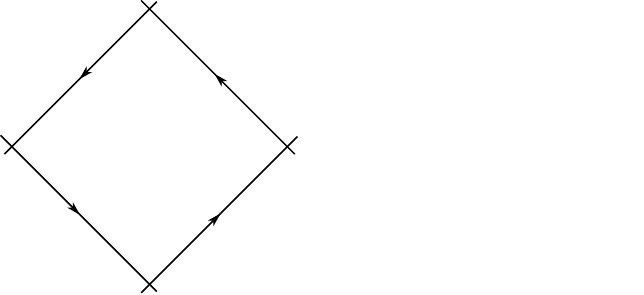
  \caption{Two examples of the region gluing equations. The product of all the parameters is \(1\) regardless of the orientation of the boundary segments.}
  \label{fig:region-gluing-example}
\end{figure}

\subsection{Conventions on ideal tetrahedra}
\label{sec:tetrahedra}
We think of the vertices of an idea tetrahedron \(\tau\) as lying on the Riemann sphere, which is the boundary at infinity of hyperbolic \(3\)-space \(\HH^3\).
By using the upper half-plane model we can identify the boundary of \(\mathbb H^3\) at infinity with the Riemann sphere \(\widehat \CC = \CC \cup \{\infty\}\).
(This is one way to compute \(\operatorname{Isom}(\HH^3) = \pslg\): isometries of \(\HH^{3}\) correspond to isometries of the boundary, which are given by the group \(\pslg\) of fractional linear transformations.)
The geometry of an ideal tetrahedron \(\tau\) is determined by the locations \(p_i \in \widehat \CC,i = 0, 1, 2, 3\) of the points.
The shape parameter of \(\tau\) is their cross-ratio.

\begin{definition}
  For \(p_0, p_1, p_2, p_3 \in \widehat \CC = \CC \cup \{\infty\}\), the \defemph{cross-ratio} is
  \[
    \crossrat{p_0}{p_1}{p_2}{p_3}
    =
    \frac{(p_0 - p_3)(p_1 - p_2)}{(p_0 - p_2)(p_1 - p_3)}.
  \]
  It is well known that \(\crossrat{p_0}{p_1}{p_2}{p_3}\) is invariant under the action of \(\pslg\) by fractional linear transformations.
\end{definition}

We follow \cite[Definition 2.6]{Cho2014} and use a slightly nonstandard convention on edges and shape parameters that is more convenient for our purposes.
Our tetrahedra have signs, and the relationship between the shape parameters and the cross-ratio depends on the sign.

\begin{definition}
  An ideal tetrahedron is \defemph{labeled} if its vertices are totally ordered by labeling them with the set \(\{0, 1, 2, 3\}\) and it is assigned a \defemph{sign} \(\epsilon \in \{1, -1\}\).
  If the vertices of a labeled tetrahedron \(\tau\) are at points \(p_0, p_1, p_2, p_3 \in \widehat \CC\), then we assign the edges \(01\) and \(23\) the shape parameter \(z^0\) given by the cross-ratio
  \[
    z^0 \defeq \crossrat{p_0}{p_1}{p_2}{p_3}^{\epsilon}.
  \]
  We assign the edges \(12\) and \(03\) the shape \(z^{1}\) and the edges \(02\) and \(13\) the shape \(z^{2}\) given by
  \[
    (z^{1})^{\epsilon} = \frac{1}{1- (z^0)^{\epsilon}}
    \text{ and }
    (z^{2})^{\epsilon} = 1- \frac{1}{(z^0)^{\epsilon}}.
  \]
  A tetrahedron is \defemph{degenerate} if one (hence all of) its shape parameters is \(0\), \(1\), or \(\infty\).
\end{definition}
\begin{marginfigure}
  \centering
  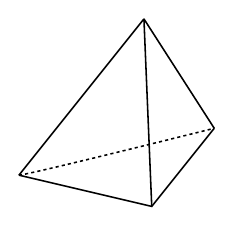
  \caption{Shape parameters assigned to the edges of a labeled tetrahedron.}
  \label{fig:tetrahedron-conventions-small}
\end{marginfigure}

This means that
\begin{align}
  z^{1} &= \frac{1}{1-z^0}
        & 
  z^{2} &= 1 - \frac{1}{z^0}
        &
  \text{for } \epsilon &= 1
  \\
  z^{1} &= 1-\frac{1}{z^0}
        & 
  z^{2} &= \frac{1}{1 - z^0}
        &
  \text{for } \epsilon &= -1
\end{align}
In general, if we index tetrahedra by a symbol \(j\), we write \(z_j^{k}\) for the \(k\)th shape parameter of tetrahedron \(j\) and \(\epsilon_j\) for its sign.
It is frequently useful to use the identity
\[
  z^{k+1}_j =
  \begin{dcases}
    \frac{1}{1-z^k_j} & \epsilon_j = 1
    \\
    1- \frac{1}{z^k_j} & \epsilon_j = -1
  \end{dcases}
\]
The index \(k\) is modulo \(3\), so \(z^3_j = z^0_j\) regardless of \(\epsilon_j\).

\subsection{The four-term decomposition}
\label{sec:four-term}
\begin{figure}
  \centering
  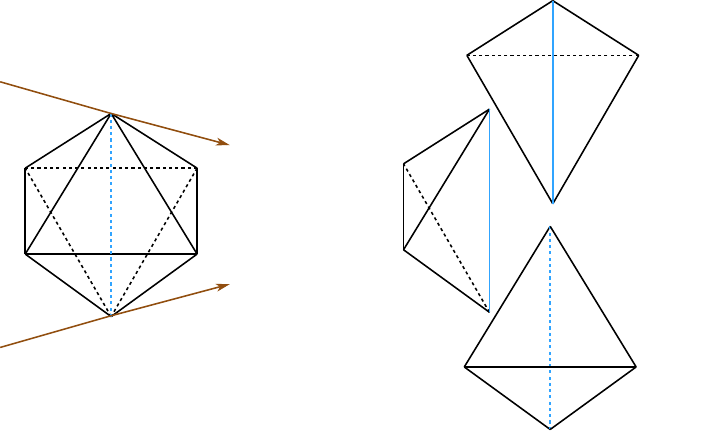
  \caption{The four-term decomposition of an ideal octahedron at a positive crossing.}
  \label{fig:four-term}
\end{figure}
We can now describe the four-term decomposition, which we will use to show that the holonomy representation of a diagram given in \cref{def:holonomy-representation-diagram} agrees with the shapes of the ideal octahedra from \cref{sec:octahedra}.
The idea is to draw a single vertical edge from \(P_1\) to \(P_2\), as in \cref{fig:four-term}.
This divides each octahedron into four tetrahedra, each of which lies between two segments of the diagram, and we label them \(N,S,E,W\) as with the regions near a crossing (see \cref{fig:crossing-regions}).

To describe shapes for the tetrahedra we label them and identify their vertices with points of \(\widehat \CC\).
At a positive crossing, our convention is that the vertices are always ordered \(P_2 P_1 P_- P_+\), that  \(\tau_{N}\) and \(\tau_{S}\) are positive, and that \(\tau_{W}\) and \(\tau_{E}\) are negative.
(Recall that \(P_+, P_+'\) and \(P_-, P_-'\) are identified.)
Geometrically \(P_-\) is located at \(0\), \(P_+\) is located at \(\infty\), and we vary the locations of \(P_1\) and \(P_2\) to give the correct shapes.
\begin{table}
  \[
    \begin{array}{c|c|ccc|c}
      & \text{vertices} & \text{sign } \epsilon & P_1 & P_2  & \text{shape } z^{0} \\
      \hline
      \tau_{N} & P_1P_2P_-'P_+ & 1 & -1/b_1 & -1/b_{2'} & b_{2'}/b_1
      \\
      \tau_{W} & P_1P_2P_-P_+ & -1 &-1/m_1 b_1 & -1/b_2 & m_1 b_1/b_2
      \\
      \tau_{S} & P_1P_2P_-P_+' & 1 &  -1/m_1 b_{1'} & -1/m_2 b_2 & m_2 b_2/ m_1 b_{1'}
      \\
      \tau_{E} & P_1P_2P_-'P_+' & -1 & -1/b_{1'} & -1/m_2 b_{2'} & b_{1'}/m_2 b_{2'}
    \end{array}
  \]
  \caption{Geometric data associated to the four-term decomposition at a positive crossing.}
  \label{table:positive-crossing-data-four}
\end{table}
At a negative crossing we flip the signs of the tetrahedra.
Both sets of conventions are summarized in \cref{table:positive-crossing-data-four,table:negative-crossing-data-four}.

\begin{definition}
  \label{def:pinched-crossing}
  A crossing (labeled as in \cref{fig:crossing-regions}) is \defemph{pinched} if if any of the equations
  \[
    b_2 = m_1 b_1
    ,\quad
    m_2 b_2 = m_1 b_{1'}
    ,\quad
    b_{2'} = b_{1}
    ,\quad
    m_2 b_{2'} = b_{1'}
  \]
  hold, in which case all of them do.
\end{definition}

\begin{table}
  \[
    \begin{array}{c|c|ccc|c}
      & \text{vertices} & \text{sign } \epsilon & P_1 & P_2  & \text{shape } z^{0} \\
      \hline
      \tau_{N} & P_2P_1P_-'P_+ & -1 & -1/b_1 & -1/b_{2'} & b_{2'}/b_1
      \\
      \tau_{W} & P_2P_1P_-P_+ & 1 &-1/m_1 b_1 & -1/b_2 & m_1 b_1/b_2
      \\
      \tau_{S} & P_2P_1P_-P_+' & -1 &  -1/m_1 b_{1'} & -1/m_2 b_2 & m_2 b_2/ m_1 b_{1'}
      \\
      \tau_{E} & P_2P_1P_-'P_+' & 1 & -1/b_{1'} & -1/m_2 b_{2'} & b_{1'}/m_2 b_{2'}
    \end{array}
  \]
  \caption{
    Geometric data associated to the four-term decomposition at a negative crossing.
    The only difference from the positive case is that all the tetrahedra have the opposite sign.
  }
  \label{table:negative-crossing-data-four}
\end{table}

\begin{theorem}
  At any non-pinched crossing the shaped tetrahedra of \cref{table:positive-crossing-data-four,table:negative-crossing-data-four} are geometrically non-degenerate.
  They glue together to give a well-defined hyperbolic structure on an ideal octahedron with edge shapes given by \cref{eq:edge-shapes-vertical,eq:edge-shapes-horizontal}.
\end{theorem}
\begin{proof}
  The non-degeneracy claim is obvious from \cref{def:pinched-crossing}, so consider the claim about gluing.
  The horizontal edges are automatic.
  For example, at a positive crossing the edge \(P_+ P_-'\) is the \(12\) edge of  \(\tau_{S}\), so it is assigned the shape
  \[
    z_{S}^{0} = \frac{m_2 b_2}{m_1 b_{1'}} = o_{S}
  \]
  as it should be.

  The significant part is checking the vertical edges.
  Again, we compute a representative case.
  Consider the edge \(P_+ P_2\) at a positive crossing.
  Its shape has contributions from \(\tau_{W}\) and \(\tau_{S}\); from \cref{table:positive-crossing-data-four} they are
  \[
    z_W^{1} z_S^{1}
    =
    \left(1 - \frac{b_2}{m_1 b_1} \right)
    \left(1 - \frac{m_2 b_2}{m_1 b_{1'}}\right)^{-1}
    =
    \frac{b_{1'}}{b_1}
    \frac{b_2 - m_1 b_1}{m_2 b_2 - m_1 b_{1'}}
  \]
  and this is equal to \(1/m_2 a_2 = o_2\) by \cref{prop:b-gluing-relations} below.
\end{proof}

\begin{lemma}
  \label{prop:b-gluing-relations}
  At a non-pinched positive crossing,
  \begin{equation}
    \label{eq:positive-a-relations}
    \begin{aligned}
      \frac{a_1}{m_1}
      &=
      \frac{1 - b_2/m_1 b_1 }{1 - b_{2'}/b_1}
      &
      \frac{a_{1'}}{m_1}
      &=
      \frac{1 - m_2 b_2 / m_1 b_{1'}}{1 - m_2 b_{2'} / b_{1'}}
      \\
      m_2 a_2
      &=
      \frac{1 - m_2 b_2 / m_1 b_{1'}}{1 - b_2 / m_1 b_1}
      &
      m_2 a_{2'}
      &=
      \frac{1 - m_2 b_{2'} / b_{1'}}{1 - b_{2'} / b_1}
    \end{aligned}
  \end{equation}
  while at a non-pinched negative crossing
  \begin{equation}
    \label{eq:negative-a-relations}
    \begin{aligned}
      m_1 a_1
      &=
      \frac{1 - m_1 b_1 / b_2 }{1 - b_{1} / b_{2'} }
      &
      m_1 a_{1'}
      &=
      \frac{1 - m_1 b_{1'} / m_2 b_2 }{1 - b_{1'} / m_2 b_{2'} }
      \\
      \frac{a_2}{m_2}
      &=
      \frac{1 - m_1 b_{1'} / m_2 b_2 }{1 - m_1 b_1 / b_2 }
      &
      \frac{a_{2'}}{m_2}
      &=
      \frac{1 - b_{1'} / m_2b_{2'} }{1 - b_1 / b_{2'} }
    \end{aligned}
  \end{equation}
\end{lemma}
\begin{proof}
  Once we know \eqref{eq:positive-a-relations} and \eqref{eq:negative-a-relations} it is easy to check them against (\ref{eq:a-transf-positive}--\ref{eq:m-transf-negative}).
\end{proof}

In \cref{sec:shaped-tangles} we used a choice of \(\chi\)-coloring to define a holonomy representation \(\rho : \Pi_1(D) \to \slg\).
Here we show that this agrees with the geometric holonomy representation induced by the four-term decomposition, justifying our name.

We determined the geometry of the tetrahedra in an ideal triangulation by choosing where on \(\partial \HH^3 = \widehat \CC\) their ideal vertices lie.
When we glue two tetrahedra together along a face they will in general disagree about where the vertices of that face are.
The \defemph{face map} \(g \in \pslg = \operatorname{Isom}(\widehat \CC)\) sends the vertices of one face to the other by a fractional linear transformation.
Together all the face maps give a representation \(\pi_1(S^{3} \setminus L ) \to \pslg\).%
\note{
  More precisely, they give a representation of the fundamental groupoid of \(S^{3} \setminus L\) with one basepoint for every tetrahedron.
}
Below we choose the locations of the points \(P_1\) and \(P_2\) so that the face-pairing maps at a crossing exactly correspond to the matrices in \cref{def:holonomy-representation-diagram}.
More precisely:

\begin{theorem}
  \label{thm:holonomies-agree}
  The holonomy representation of a \(\chi\)-colored diagram agrees with the holonomy representation generated by the face maps of the associated four-term decomposition. 
\end{theorem}
To prove the theorem it is helpful to consider a slightly different description of the ideal octahedron at a positive crossing.
This description is related to ideal triangulations of punctured discs.\note{This has something to do with cluster algebras, as discussed in \cref{sec:quantum-groups}.}
Every link $L$ can be represented as the closure of a braid $\beta$.
If we view $\beta$ as an element of the mapping class group of the $n$-punctured disc $D_n$, then the complement $M_L$ of $L$ is the mapping torus\note{If $f : \Sigma \to \Sigma$ is a homeomorphism, them mapping torus of $f$ is the space $\Sigma \times [0,1]$ modulo the relation $(x,0) \sim (f(x),1)$.} of $\beta$.
If we ideally triangulate $D_n$ and interpret the action of $\beta$ in terms of this triangulation, we can get an ideal triangulation of the mapping torus of $\beta$, that is of $M_L$.
\begin{figure}
  \centering
  \subcaptionbox{The initial triangulation.\label{fig:triangulated-braiding-0}}{ \def\svgwidth{2.5in} 
\begingroup%
  \makeatletter%
  \providecommand\color[2][]{%
    \errmessage{(Inkscape) Color is used for the text in Inkscape, but the package 'color.sty' is not loaded}%
    \renewcommand\color[2][]{}%
  }%
  \providecommand\transparent[1]{%
    \errmessage{(Inkscape) Transparency is used (non-zero) for the text in Inkscape, but the package 'transparent.sty' is not loaded}%
    \renewcommand\transparent[1]{}%
  }%
  \providecommand\rotatebox[2]{#2}%
  \newcommand*\fsize{\dimexpr\f@size pt\relax}%
  \newcommand*\lineheight[1]{\fontsize{\fsize}{#1\fsize}\selectfont}%
  \ifx\svgwidth\undefined%
    \setlength{\unitlength}{215.81824493bp}%
    \ifx\svgscale\undefined%
      \relax%
    \else%
      \setlength{\unitlength}{\unitlength * \real{\svgscale}}%
    \fi%
  \else%
    \setlength{\unitlength}{\svgwidth}%
  \fi%
  \global\let\svgwidth\undefined%
  \global\let\svgscale\undefined%
  \makeatother%
  \begin{picture}(1,1.03093509)%
    \lineheight{1}%
    \setlength\tabcolsep{0pt}%
    \put(0.366521,0.92532356){\color[rgb]{0,0,0}\makebox(0,0)[lt]{\lineheight{1.25}\smash{\begin{tabular}[t]{l}$P_+$\end{tabular}}}}%
    \put(0.38424276,0.03278304){\color[rgb]{0,0,0}\makebox(0,0)[lt]{\lineheight{1.25}\smash{\begin{tabular}[t]{l}$P_-$\end{tabular}}}}%
    \put(0.1486229,0.51022397){\color[rgb]{0,0,0}\makebox(0,0)[lt]{\lineheight{1.25}\smash{\begin{tabular}[t]{l}$P_2$\end{tabular}}}}%
    \put(0.59268972,0.51326773){\color[rgb]{0,0,0}\makebox(0,0)[lt]{\lineheight{1.25}\smash{\begin{tabular}[t]{l}$P_1$\end{tabular}}}}%
    \put(0,0){\includegraphics[width=\unitlength,page=1]{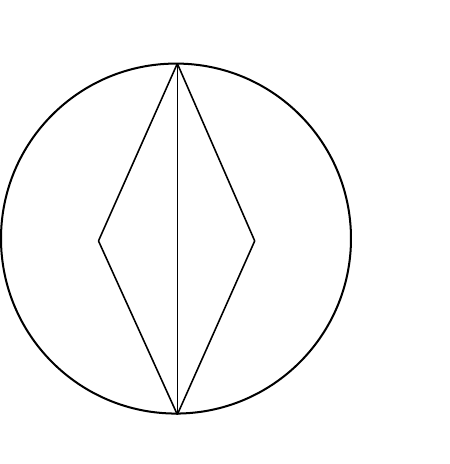}}%
  \end{picture}%
\endgroup%
 }%
  \hfill
  \subcaptionbox{Building a tetrahedron on top of a quadrilateral.\label{fig:triangulated-braiding-1}}{ \def\svgwidth{2.5in} 
\begingroup%
  \makeatletter%
  \providecommand\color[2][]{%
    \errmessage{(Inkscape) Color is used for the text in Inkscape, but the package 'color.sty' is not loaded}%
    \renewcommand\color[2][]{}%
  }%
  \providecommand\transparent[1]{%
    \errmessage{(Inkscape) Transparency is used (non-zero) for the text in Inkscape, but the package 'transparent.sty' is not loaded}%
    \renewcommand\transparent[1]{}%
  }%
  \providecommand\rotatebox[2]{#2}%
  \newcommand*\fsize{\dimexpr\f@size pt\relax}%
  \newcommand*\lineheight[1]{\fontsize{\fsize}{#1\fsize}\selectfont}%
  \ifx\svgwidth\undefined%
    \setlength{\unitlength}{215.81824493bp}%
    \ifx\svgscale\undefined%
      \relax%
    \else%
      \setlength{\unitlength}{\unitlength * \real{\svgscale}}%
    \fi%
  \else%
    \setlength{\unitlength}{\svgwidth}%
  \fi%
  \global\let\svgwidth\undefined%
  \global\let\svgscale\undefined%
  \makeatother%
  \begin{picture}(1,1.03093509)%
    \lineheight{1}%
    \setlength\tabcolsep{0pt}%
    \put(0.366521,0.92532356){\color[rgb]{0,0,0}\makebox(0,0)[lt]{\lineheight{1.25}\smash{\begin{tabular}[t]{l}$P_+$\end{tabular}}}}%
    \put(0.38424276,0.03278304){\color[rgb]{0,0,0}\makebox(0,0)[lt]{\lineheight{1.25}\smash{\begin{tabular}[t]{l}$P_-$\end{tabular}}}}%
    \put(0.1486229,0.51022397){\color[rgb]{0,0,0}\makebox(0,0)[lt]{\lineheight{1.25}\smash{\begin{tabular}[t]{l}$P_2$\end{tabular}}}}%
    \put(0.59268972,0.51326773){\color[rgb]{0,0,0}\makebox(0,0)[lt]{\lineheight{1.25}\smash{\begin{tabular}[t]{l}$P_1$\end{tabular}}}}%
    \put(0,0){\includegraphics[width=\unitlength,page=1]{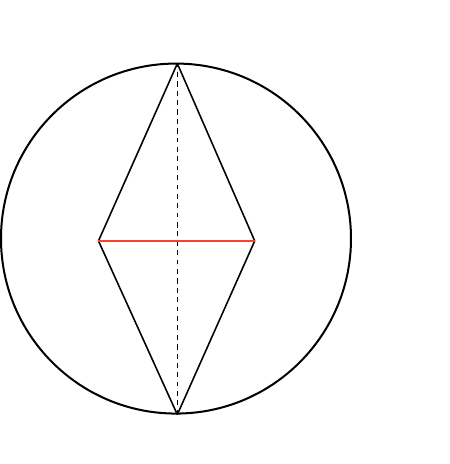}}%
  \end{picture}%
\endgroup%
 }%
  \hfill
  \subcaptionbox{Adding two more tetrahedra.\label{fig:triangulated-braiding-2}}{ \def\svgwidth{2.5in} 
\begingroup%
  \makeatletter%
  \providecommand\color[2][]{%
    \errmessage{(Inkscape) Color is used for the text in Inkscape, but the package 'color.sty' is not loaded}%
    \renewcommand\color[2][]{}%
  }%
  \providecommand\transparent[1]{%
    \errmessage{(Inkscape) Transparency is used (non-zero) for the text in Inkscape, but the package 'transparent.sty' is not loaded}%
    \renewcommand\transparent[1]{}%
  }%
  \providecommand\rotatebox[2]{#2}%
  \newcommand*\fsize{\dimexpr\f@size pt\relax}%
  \newcommand*\lineheight[1]{\fontsize{\fsize}{#1\fsize}\selectfont}%
  \ifx\svgwidth\undefined%
    \setlength{\unitlength}{215.81824493bp}%
    \ifx\svgscale\undefined%
      \relax%
    \else%
      \setlength{\unitlength}{\unitlength * \real{\svgscale}}%
    \fi%
  \else%
    \setlength{\unitlength}{\svgwidth}%
  \fi%
  \global\let\svgwidth\undefined%
  \global\let\svgscale\undefined%
  \makeatother%
  \begin{picture}(1,1.03093509)%
    \lineheight{1}%
    \setlength\tabcolsep{0pt}%
    \put(0.366521,0.92532356){\color[rgb]{0,0,0}\makebox(0,0)[lt]{\lineheight{1.25}\smash{\begin{tabular}[t]{l}$P_+$\end{tabular}}}}%
    \put(0.38424276,0.03278304){\color[rgb]{0,0,0}\makebox(0,0)[lt]{\lineheight{1.25}\smash{\begin{tabular}[t]{l}$P_-$\end{tabular}}}}%
    \put(0.1486229,0.51022397){\color[rgb]{0,0,0}\makebox(0,0)[lt]{\lineheight{1.25}\smash{\begin{tabular}[t]{l}$P_2$\end{tabular}}}}%
    \put(0.59268972,0.51326773){\color[rgb]{0,0,0}\makebox(0,0)[lt]{\lineheight{1.25}\smash{\begin{tabular}[t]{l}$P_1$\end{tabular}}}}%
    \put(0,0){\includegraphics[width=\unitlength,page=1]{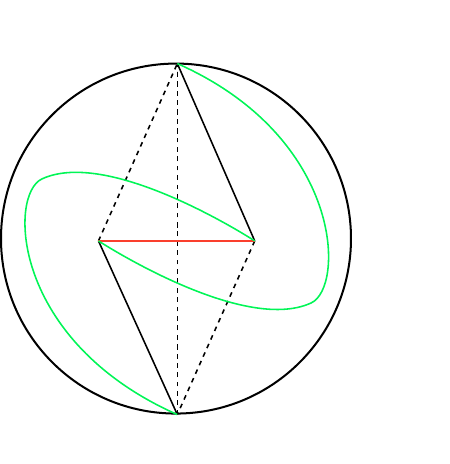}}%
  \end{picture}%
\endgroup%
 }%
  \hfill
  \subcaptionbox{The final result.\label{fig:triangulated-braiding-total}}{ \def\svgwidth{2.5in} 
\begingroup%
  \makeatletter%
  \providecommand\color[2][]{%
    \errmessage{(Inkscape) Color is used for the text in Inkscape, but the package 'color.sty' is not loaded}%
    \renewcommand\color[2][]{}%
  }%
  \providecommand\transparent[1]{%
    \errmessage{(Inkscape) Transparency is used (non-zero) for the text in Inkscape, but the package 'transparent.sty' is not loaded}%
    \renewcommand\transparent[1]{}%
  }%
  \providecommand\rotatebox[2]{#2}%
  \newcommand*\fsize{\dimexpr\f@size pt\relax}%
  \newcommand*\lineheight[1]{\fontsize{\fsize}{#1\fsize}\selectfont}%
  \ifx\svgwidth\undefined%
    \setlength{\unitlength}{215.81824493bp}%
    \ifx\svgscale\undefined%
      \relax%
    \else%
      \setlength{\unitlength}{\unitlength * \real{\svgscale}}%
    \fi%
  \else%
    \setlength{\unitlength}{\svgwidth}%
  \fi%
  \global\let\svgwidth\undefined%
  \global\let\svgscale\undefined%
  \makeatother%
  \begin{picture}(1,1.03093509)%
    \lineheight{1}%
    \setlength\tabcolsep{0pt}%
    \put(0.366521,0.92532356){\color[rgb]{0,0,0}\makebox(0,0)[lt]{\lineheight{1.25}\smash{\begin{tabular}[t]{l}$P_+$\end{tabular}}}}%
    \put(0.38424276,0.03278304){\color[rgb]{0,0,0}\makebox(0,0)[lt]{\lineheight{1.25}\smash{\begin{tabular}[t]{l}$P_-$\end{tabular}}}}%
    \put(0.1486229,0.51022397){\color[rgb]{0,0,0}\makebox(0,0)[lt]{\lineheight{1.25}\smash{\begin{tabular}[t]{l}$P_2$\end{tabular}}}}%
    \put(0.59268972,0.51326773){\color[rgb]{0,0,0}\makebox(0,0)[lt]{\lineheight{1.25}\smash{\begin{tabular}[t]{l}$P_1$\end{tabular}}}}%
    \put(0,0){\includegraphics[width=\unitlength,page=1]{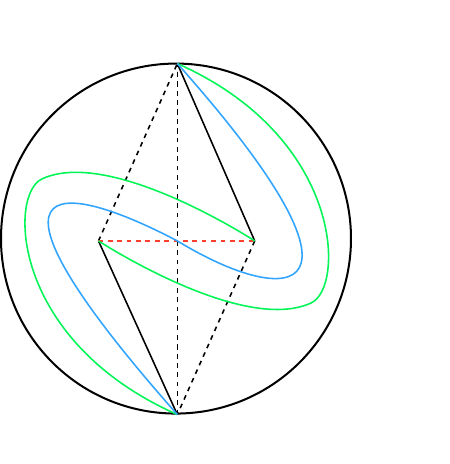}}%
  \end{picture}%
\endgroup%
 }%
  \caption{Building an ideal octahedron.}
  \label{fig:build-ideal-octahedron}
\end{figure}
We describe this process in \cref{fig:build-ideal-octahedron}.
(To visualize it, it may help to examine \cref{fig:octahedron-positive-four-term-colored}.)
\begin{marginfigure}
  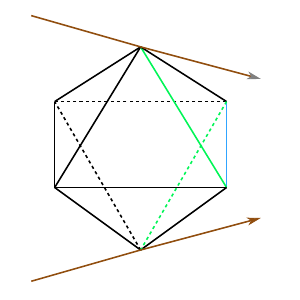
  \caption{The edges of this octahedron are colored to match the edges in \cref{fig:build-ideal-octahedron}.}
  \label{fig:octahedron-positive-four-term-colored}
\end{marginfigure}

We start with the triangulation in \cref{fig:triangulated-braiding-0}.
For simplicity we consider a single crossing at a time, so we only need to consider two punctures $P_1$ and $P_2$ (plus two auxiliary punctures $P_+$ at the top and $P_-$ at the bottom.)
We think of these punctures as corresponding to strands oriented out of the page.%
\note{It's straightforward to extend this picture to any number of interior punctures by gluing copies of \cref{fig:triangulated-braiding-0} along the vertical edges.}

We can modify ideal triangulations by flipping the diagonal of a quadrilateral.
From a $3$-dimensional perspective, we are attaching the final edge of a tetrahedron above its base.
In \cref{fig:triangulated-braiding-1} we add a {\color{slred} red} edge to build an ideal tetrahedron $P_2 P_- P_+ P_1$.
We then add two {\color{slgreen} green} edges, building two more tetrahedra.
Finally, we add the {\color{slblue} blue} edge to finish.

Ignoring the interior dashed edges, which are now below the tetrahedra we have added, we have a new, twisted copy of the triangulation in \cref{fig:triangulated-braiding-0}.
By rotating $P_1$ above $P_2$, we pull the green edges taut and obtain our original picture, but with the points $P_1$ and $P_2$ swapped.
In the process, we have braided the point $P_1$ over the point $P_2$.
This corresponds to a positive braiding in our conventions, assuming that the strands are oriented out of the page in \cref{fig:build-ideal-octahedron}.
At the same time we have built a twisted octahedron at the crossing as required.

\begin{definition}
  \label{def:fractional-linear-action}
  To match our convention that words in \(\Pi_1(D)\) are read left-to-right, elements of \(\pslg\) act on \(\widehat \CC\) on the \emph{right} by
  \[
    z \cdot
    \begin{bmatrix}
      a & b \\
      c & d
    \end{bmatrix}
    =
    \frac{az + c}{bz + d}.
  \]
\end{definition}

\begin{lemma}
  \label{lemma:positive-face-maps-agree}
  The face maps of the octahedron at a positive crossing agree as elements of $\pslg$ with the holonomies assigned to the diagram complement by the shape parameters.
\end{lemma}
\begin{marginfigure}
  {\def\svgwidth{2.5in}
\begingroup%
  \makeatletter%
  \providecommand\color[2][]{%
    \errmessage{(Inkscape) Color is used for the text in Inkscape, but the package 'color.sty' is not loaded}%
    \renewcommand\color[2][]{}%
  }%
  \providecommand\transparent[1]{%
    \errmessage{(Inkscape) Transparency is used (non-zero) for the text in Inkscape, but the package 'transparent.sty' is not loaded}%
    \renewcommand\transparent[1]{}%
  }%
  \providecommand\rotatebox[2]{#2}%
  \newcommand*\fsize{\dimexpr\f@size pt\relax}%
  \newcommand*\lineheight[1]{\fontsize{\fsize}{#1\fsize}\selectfont}%
  \ifx\svgwidth\undefined%
    \setlength{\unitlength}{215.81824493bp}%
    \ifx\svgscale\undefined%
      \relax%
    \else%
      \setlength{\unitlength}{\unitlength * \real{\svgscale}}%
    \fi%
  \else%
    \setlength{\unitlength}{\svgwidth}%
  \fi%
  \global\let\svgwidth\undefined%
  \global\let\svgscale\undefined%
  \makeatother%
  \begin{picture}(1,1.03093509)%
    \lineheight{1}%
    \setlength\tabcolsep{0pt}%
    \put(0.366521,0.92532356){\color[rgb]{0,0,0}\makebox(0,0)[lt]{\lineheight{1.25}\smash{\begin{tabular}[t]{l}$P_+$\end{tabular}}}}%
    \put(0.38424276,0.03278304){\color[rgb]{0,0,0}\makebox(0,0)[lt]{\lineheight{1.25}\smash{\begin{tabular}[t]{l}$P_-$\end{tabular}}}}%
    \put(0.1486229,0.51022397){\color[rgb]{0,0,0}\makebox(0,0)[lt]{\lineheight{1.25}\smash{\begin{tabular}[t]{l}$P_2$\end{tabular}}}}%
    \put(0.59268972,0.51326773){\color[rgb]{0,0,0}\makebox(0,0)[lt]{\lineheight{1.25}\smash{\begin{tabular}[t]{l}$P_1$\end{tabular}}}}%
    \put(0,0){\includegraphics[width=\unitlength,page=1]{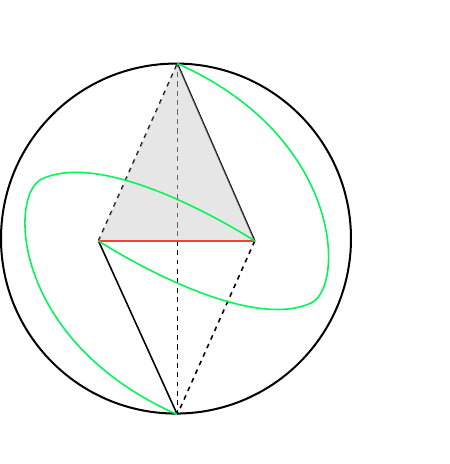}}%
  \end{picture}%
\endgroup%
}
  \caption{The face corresponding to $\chi_2^{+}$.}
  \label{fig:triangulated-braiding-top-face-map}
\end{marginfigure}
\begin{proof}
  If we think of the face map in \cref{fig:triangulated-braiding-top-face-map} as going from $\tau_W$ to $\tau_S$, then it represents the holonomy from travelling above strand $2$, which should be mapped to $\chi_2^+$.
  Observe that for any $z \in \widehat \CC$,
  \[
    z \cdot \chi_2^+ =
    z \cdot
      \begin{bmatrix}
        a_2 & 0 \\
        (a_2 - 1/m_2)/b_2 & 1
      \end{bmatrix}
      =
    a_2 z + \frac{a_2}{b_2} - \frac{1}{m_2 b_2}.
  \]
  In particular, we see that $\chi_2^+$ fixes $\infty$, maps $-1/b_2$ to $-1/m_2 b_2$, and maps $-1/m_1 b_1$ to 
\[
  (-1/m_1 b_1) \cdot \chi_2^+ = a_2 \left( \frac{1}{b_2} -\frac{1}{m_1 b_1} \right)   - \frac{1}{m_2 b_2} = -\frac{1}{m_1 b_1'}.
\]
Because fractional linear transformations are totally determined by their action on three points of $\hat \CC$, we conclude that the face map agrees with $\chi_2^+$.

The negative holonomy of strand $2$ does not correspond directly to a face map, but the face map going from $\tau_W$ to $\tau_N$ similarly corresponds to the \emph{inverse} negative holonomy of $\chi_1$.
We see that the transformation
\[
  z \cdot (\chi_1^-)^{-1}
  = z \cdot
  \begin{bmatrix}
    1 & -(1 + m_1/a_1) b_1 \\
    0 & 1/a_1
  \end{bmatrix}
  \left(
    -b_1 - \frac{m_1 b_1}{a_1} + \frac{1}{za_1}
  \right)^{-1}
\]
preserves $0$, maps $1/m_1 b_1$ to $-1/b_1$, and maps $-1/m_2 b_2$ to 
\[
  (-1/m_2 b_2) \cdot g^-(\chi_1)^{-1}
  =
  \left(
    -b_1 - \frac{m_1 b_1}{a_1} - \frac{b_2}{a_1}
  \right)^{-1}
  =
  - \frac{1}{b_2'}.
\]

There is a parallel characterization of the holonomies on the other side of the crossing.
For example, $\chi_{2'}^+$ corresponds to the gluing map between $\tau_N$ and $\tau_E$, and correspondingly acts on the vertices of $\tau_N$ by
\begin{align*}
  \infty \cdot \chi_{2'}^+
  &=
  \infty
  \\
  (-1/b_{2'}) \cdot \chi_{2'}^+
  &=
  -1/m_2 b_{2'}
  \\
  (-1/b_1) \cdot \chi_{2'}^+
  &=
  -\frac{a_{2'}}{b_1} + \frac{a_{2'}}{b_{2'}} - \frac{1}{m_2 b_{2'}}
  =
  -1/b_{1'}
\end{align*}
and similarly the face map gluing $\tau_S$ to $\tau_E$ is $(\chi_{1'}^-)^{-1}$.
\end{proof}

\begin{proof}[Proof of \cref{thm:holonomies-agree}.]
 At a positive crossing, we have shown the matrices \(\chi_{2}^{+}\), \(\chi_{1}^{-}\), \(\chi_{2'}^+\), and \(\chi_{1'}^{-}\) agree with the corresponding face maps.
 \(\chi_{2}^{-}\) is now the \emph{unique} matrix of the form
 \[
   \begin{bmatrix}
     1 & * \\
     0 & *
   \end{bmatrix}
 \]
 such that
 \[
   \tr \chi_{2}^{+} (\chi_{2}^{-})^{-1} = m_2 + m_2^{-1}
   \text{ and }
   \det \chi_{2}^{+} (\chi_{2}^{-})^{-1} = 1,
 \]
 and similarly for the other strands.
 This shows that we have agreement at any positive crossing.
 By repeating the computation in \cref{lemma:positive-face-maps-agree} for negative crossings we obtain the theorem.
\end{proof}

\subsection{The five-term decomposition}
\label{sec:five-term}
Alternatively we can divide the octahedron at a crossing into five tetrahedra as in \cref{fig:five-simplex-blownup}.
We think of this decomposition as being associated to the \(a\)-variables.
\begin{figure}
  \centering
  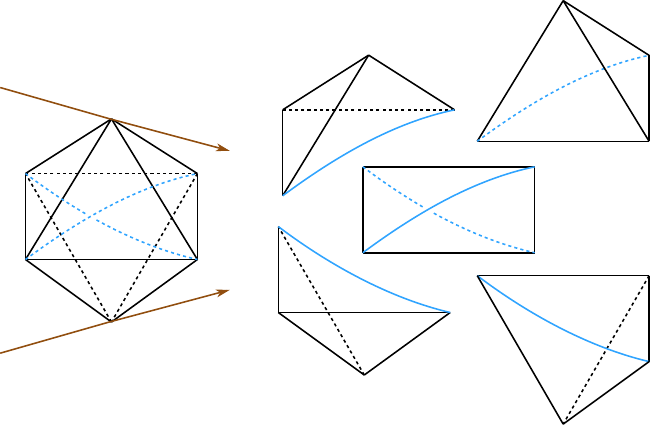
  \caption{Decomposition of the octahedron at a positive crossing into five tetrahedra.}
  \label{fig:five-simplex-blownup}
\end{figure}

\begin{table}
  \centering
  \[
    \begin{array}{c|c|ccc}
      & \text{vertices} & \text{sign } \epsilon & \text{shape } z^{0}
      \\
      \hline
      \tau_1 & P_1 P_-  P_{+}  P_{+} ' & 1 & a_1/m_1
      \\
      \tau_2 & P_2  P_+  P_{-}  P_{-}' & 1 & 1/m_2 a_2
      \\
      \tau_{1'} &  P_1 P_{-}' P_{+}  P_{+}'  & -1 & m_1/a_{1'}
      \\
      \tau_{2'} & P_2  P_{+}' P_-  P_{-}' & -1 & m_2 a_{2'}
      \\
      \tau_m & P_- P_-' P_+  P_+' & 1 & a_{1'}/a_1
    \end{array}
  \]
  \caption{Geometric data for the five-term decomposition at a positive crossing.}
  \label{table:five-simplex-positive}
\end{table}
\begin{table}
  \centering
  \[
    \begin{array}{c|c|ccc}
      & \text{vertices} & \text{sign } \epsilon & \text{shape } z^{0}
      \\
      \hline
      \tau_1 & P_1 P_+  P_{-}  P_{-} ' & -1 & 1/a_1m_1
      \\
      \tau_2 & P_2  P_-  P_{+}  P_{+}' & -1 & a_2/m_2
      \\
      \tau_{1'} &  P_1 P_{+}' P_{-}  P_{-}'  & 1 & m_1 a_{1'}
      \\
      \tau_{2'} & P_2  P_{-}' P_+  P_{+}' & 1 & m_2/a_{2'}
      \\
      \tau_m & P_- P_-' P_+  P_+' & -1 & a_{1}/a_{1'}
    \end{array}
  \]
  \caption{Geometric data for the five-term decomposition at a negative crossing.}
  \label{table:five-simplex-negative}
\end{table}

\begin{definition}
  \label{def:degenerate-crossing}
  A crossing (labeled as in \cref{fig:crossing-regions}) is \defemph{degenerate} if either of the equations
  \[
    a_1 = a_{1'}
    \text{ or }
    a_{2} = a_{2'}
  \]
  hold, in which case both do.
  A degenerate crossing is necessarily pinched but a pinched crossing can be non-degenerate.
\end{definition}

\begin{proposition}
  At any non-degenerate crossing the shaped tetrahedra of \cref{table:five-simplex-positive,table:five-simplex-negative} are non-degenerate and glue together to give an octahedron matching \cref{eq:edge-shapes-vertical,eq:edge-shapes-horizontal}.
\end{proposition}
\begin{proof}
  A crossing is non-degenerate if \(a_1/a_1' = a_2 /a_{2'}\) is not equal to \(1\), which is the same as saying that \(\tau_m\) is geometrically non-degenerate.
  Suppose the crossing is positive.
  Then since
  \[
    \frac{a_1}{a_{1'}} = 
      1 - \frac{m_1 b_1}{b_2} \left(1 - \frac{a_1}{m_1}\right)\left(1 - \frac{1}{m_2 a_2}\right) \ne 1
  \]
  we cannot have \(a_1 = m_1\) or \(a_2 = 1/m_2\), which says that \(\tau_1\) and \(\tau_2\) are geometrically non-degenerate.
  There is a similar expression for \(a_{1}/a_{1'}\) in terms of \(a_{1'}\), \(a_{2'}\), \(b_{1'}\), and \(b_{2'}\) which comes from inverting the map \(B\), and it shows that \(a_1/a_{1'} \ne 1\) implies \(a_{1'} \ne m_1\) and \(a_{2'} \ne 1/m_2\), so \(\tau_{1'}\) and \(\tau_{2'}\) are geometrically non-degenerate.
  If the crossing is negative, a similar argument shows that \(a_1/a_{1'} \ne 1\) implies \(a_1, a_{1'} \ne 1/m_1\) and \(a_{2}, a_{2'} \ne m_2\).

  Next we check the gluing equation.
  The vertical edges are automatic.
  For example, at a positive crossing the total shape of \(P_-P_1\) should be \(o_1 = a_1/m_1\), and the only contributing tetrahedron is \(\tau_1\):
  \[
    z_1^{0} = \frac{a_1}{m_1} = o_1.
  \]

  The horizontal edges require using some identities on \(a_1/a_{1'} = a_{2'}/a_{2}\).
  We compute some representative examples.
  Consider the edge \(P_- P_+\) at a positive crossing, which should have shape \(o_{12} = b_2/m_1 b_1\).
  It has contributions from \(\tau_1\), \(\tau_{2}\), and \(\tau_m\), which give a shape
  \begin{align*}
    z_1^{1} z_2^{1} z_m^{2}
    &=
    \left(1 - \frac{a_1}{m_1} \right)^{-1}
    \left( 1- \frac{1}{m_2 a_1} \right)^{-1}
    \left( 1 - \frac{a_1}{a_{1'}}\right)
    \\
    &=
    \frac{m_1 b_1}{b_2} = o_{W}
  \end{align*}
  using \cref{eq:positive-b-relations} below.
  Similarly, at a positive crossing the edge \(P_-' P_+'\) has contributions from \(\tau_{1'}, \tau_{2'}\), and \(\tau_m\), and again by \cref{eq:positive-a-relations} 
  \[
    z_{1'}^{1} z_{2'}^{1} z_m^{2}
    =
    \left( 1 - \frac{m_1}{a_{1'}} \right)^{-1} \left( 1 - {m_2 a_{2'}} \right)^{-1} \left(1 - \frac{a_1}{a_{1'}}\right)
    =
    o_{E}.\qedhere
  \]
\end{proof}

\begin{lemma}
  \label{lemma:a-gluing-relations}
  At any non-degenerate positive crossing,
  \begin{equation}
    \label{eq:positive-b-relations}
    \begin{aligned}
      \frac{b_2}{b_1 m_1} &= \frac{(1 - a_1/m_1)(1 - 1/m_2 a_2)}{1 - a_{1}/a_{1'}}
      \\
      \frac{m_2 b_{2'}}{b_{1'}} &= \frac{(1 - m_1/a_{1'})(1 - m_2 a_{2'})}{1 - a_{1}/a_{1'}}
      \\
      \frac{b_{1}}{b_{2'}} &= \frac{1 - a_{1'}/a_{1}}{(1- m_1/a_{1})(1- 1/m_2a_{2'})}
      \\
      \frac{m_1 b_{1'}}{m_2 b_{2}} &= \frac{1 - a_{1'}/a_{1}}{(1 - a_{1'}/m_1)(1 - m_2 a_{2})}
    \end{aligned}
  \end{equation}
  while at a non-degenerate negative crossing,
  \begin{equation}
    \label{eq:negative-b-relations}
    \begin{aligned}
      \frac{b_2}{b_1 m_1} &= \frac{1 - a_{1}/a_{1'}}{(1 - a_1 m_1)(1 - m_2/ a_2)}
      \\
      \frac{m_2 b_{2'}}{b_{1'}} &= \frac{1 - a_{1}/a_{1'}}{(1 - 1/m_1 a_{1'})(1 - a_{2'}/m_2)}
      \\
      \frac{b_{1}}{b_{2'}} &= \frac{(1- 1/m_1 a_{1})(1- m_2/ a_{2'})}{1 - a_{1'}/a_{1}}
      \\
      \frac{m_1 b_{1'}}{m_2 b_{2}} &= \frac{(1 - m_1 a_{1'})(1 - a_{2}/m_2)}{1 - a_{1'}/a_{1}}
    \end{aligned}
  \end{equation}
\end{lemma}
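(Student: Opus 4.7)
The plan is to verify the four identities in \eqref{eq:positive-b-relations} as direct algebraic consequences of the braiding formulas \eqref{eq:a-transf-positive}--\eqref{eq:m-transf-positive}, in the same spirit as the proof of Lemma \ref{prop:b-gluing-relations}. The key observation is that from the defining equation for $A$ in \eqref{eq:a-transf-positive},
\[
  \frac{a_1}{a_{1'}} = A,
  \qquad \text{so} \qquad
  1 - \frac{a_1}{a_{1'}} \;=\; 1 - A \;=\; \frac{m_1 b_1}{b_2}\left(1 - \frac{a_1}{m_1}\right)\left(1 - \frac{1}{m_2 a_2}\right).
\]
Dividing by $1 - a_1/a_{1'}$, which is nonzero by the non-degeneracy hypothesis, and rearranging yields the first identity in \eqref{eq:positive-b-relations}.

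The three remaining identities should follow from the same observation after rewriting the right-hand side of the expression for $1 - A$ in terms of the outgoing variables. Concretely, I would use the $b$-transformation formulas \eqref{eq:b-transf-positive} to substitute for $b_2$, $b_{1'}$, and $b_{2'}$, and the identities $a_{1'} = a_1/A$, $a_{2'} = a_2 A$ to substitute for $a_1$ and $a_2$ wherever needed, producing four different factorizations of the single quantity $1 - A$, each with a different leading $b$-ratio (one of $b_2/(b_1 m_1)$, $m_2 b_{2'}/b_{1'}$, $b_1/b_{2'}$, $m_1 b_{1'}/(m_2 b_2)$) and with correspondingly different arguments inside the $(1 - \cdot)(1 - \cdot)$ product. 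A slightly slicker alternative is to read off the $b$-differences $b_2 - m_1 b_1$, $m_2 b_2 - m_1 b_{1'}$, $m_2 b_{2'} - b_{1'}$, $b_{2'} - b_1$ from Lemma \ref{prop:b-gluing-relations} and express each target ratio as a quotient of two such differences that matches one of the four desired right-hand sides.

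The negative-crossing identities \eqref{eq:negative-b-relations} follow from the analogous computation applied to $B^{-1}$, using the formula for $\tilde A$ in \eqref{eq:a-transf-negative}. Here $a_1/a_{1'} = \tilde A^{-1}$, so the roles of $1 - a_1/a_{1'}$ and $1 - a_{1'}/a_1$ swap between \eqref{eq:positive-b-relations} and \eqref{eq:negative-b-relations}, and the factors $(1 - a_1/m_1)$, $(1 - 1/(m_2 a_2))$ are replaced by $(1 - m_1 a_1)$, $(1 - m_2/a_2)$, together with the corresponding substitutions in the primed variables.

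The main obstacle is bookkeeping: there are many variables and small asymmetries (factors of $m_i$ appearing in some factors but not others, and primed versus unprimed arguments), so care is needed to keep the algebra tidy. No new structural ingredient is required beyond the defining formulas for $B$ and $B^{-1}$ and the assumption $a_1 \ne a_{1'}$, which is equivalent to $A \ne 1$ at a positive crossing and $\tilde A \ne 1$ at a negative crossing.
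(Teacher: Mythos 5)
Your proposal is correct and takes essentially the same approach as the paper: the paper's entire proof is the remark that ``once we know the right equations to check this is a straightforward verification'' against the braiding formulas (\ref{eq:a-transf-positive}--\ref{eq:m-transf-negative}), and your identity \(1 - a_1/a_{1'} = 1 - A = \tfrac{m_1 b_1}{b_2}(1 - a_1/m_1)(1 - 1/m_2 a_2)\) together with the substitutions from \eqref{eq:b-transf-positive} is exactly that verification made explicit. The only caveat is that your ``slicker alternative'' via the \(b\)-differences of \cref{prop:b-gluing-relations} is vaguer than the direct substitution and would need the same amount of bookkeeping in the end, so the first route is the one to carry out.
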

\begin{proof}
  As for \cref{prop:b-gluing-relations} once we know the right equations to check this is a straightforward verification.
\end{proof}

The five-term decomposition uses more tetrahedra than the four-term decomposition, but \citeauthor{Cho2016} \cite{Cho2016} and \citeauthor{Yoon2021} \cite{Yoon2021} showed it has some nice nondegeneracy properties.
The analogous theorem is not true for the four-term decomposition: if a crossing is pinched then any conjugate will also be pinched.
We discuss this further in \cite[Section 5]{McphailSnyder2024octahedralcoordinateswirtingerpresentation}.

\begin{theorem}[\protect{\cite[Theorem 1.2]{Yoon2021}}]
  \label{thm:nondegenerate-existence}
  Let \(L\) be a link and \(\rho : \pi_{1}(\comp{L}) \to \slg\) be a representation.
  We say \(\rho\) is \defemph{meridian-trivial} if it sends any meridian of \(L\) to plus or minus the identity matrix.
  If \(\rho\) is meridian-nontrivial, then for any diagram \(D\) of \(L\) there is a \(\chi\)-coloring of \(D\) such that:
  \begin{enumerate}
    \item the holonomy representation of \(D\) is conjugate to \(\rho\), and
    \item all the tetrahedra in the five-term decomposition associated to  \(D\) are geometrically nondegenerate.\qedhere
  \end{enumerate}
\end{theorem}
\begin{proof}
  As discussed in \cref{sec:region-equations} we can solve for non-degenerate \(\chi\)-colorings in terms of only the variables \(a_i\) and \(m_i\).
  Using the region variables instead of the \(a_i\), such solutions are exactly the ``non-degenerate points'' of \cite{Yoon2021}, so the claim is Theorem 1.2 of \cite{Yoon2021}.
\end{proof}

\subsection{Decorations}
\label{sec:decorations}
A \(\chi\)-coloring of a diagram determines both a holonomy representation \(\rho : \pi_1(\comp L) \to \slg\) and an additional choice called a \defemph{decoration}.
These show up naturally in a few contexts: some indeterminacies in the \(A\)-polynomial come from a choice of decoration,%
  \note{%
    The \(A\)-polynomial is usually described as a Laurent polynomial in two variables \(m\) and \(\ell\) that is well-defined up to simultaneous inversion \(m \mapsto m^{-1}, \ell \mapsto \ell^{-1}\).
    These variables represent the meridian and longitude eigenvalues determined by the decorated representation and the inversion comes from changing the choice of decoration.
  }
  and Ptolemy coordinates \cite{Zickert2016} naturally parametrize decorated representations, not representations.
The \(\slg\) Chern-Simons invariant (also known as the complex volume) is most naturally computed using a choice of decoration and similarly the quantized \(\slg\) Chern-Simons invariant of \cite{McPhailSnyderVolume} depends on this choice.

In this section we define decorations and show how a \(\chi\)-coloring of a diagram gives a decoration of its holonomy representation.
We then give a simple, explicit formula for the longitude eigenvalues of the decoration.

\begin{definition}
  A \defemph{decoration} of a representation \(\rho : \pi_{1}(\comp L) \to \slg\) is a choice of invariant line for each meridian of \(L\).
  More formally, each component \(j\) of the oriented tangle has a conjugacy class of meridians \([\mer_j] \subset \pi(T)\).
  For a representative \(\mer_j \in \pi(T)\) we choose a line \(L_j \subset \mathbb{C}^2\) (thought of as a set of \emph{row} vectors) with
  \[
    L_j \rho(\mer_j) = L_j.
  \]
\end{definition}

This definition does not depend on the choice of representative meridian: if \(\mer_j' = y^{-1} \mer_j y\) is any other representative of the conjugacy class we assign it the line \(L_j \rho(y)\), since
\[
  L_j \rho(y) \rho(y^{-1} \mer_j y) = L_j \rho(y).
\]
This choice is called a decoration of component \(j\), and a decoration of \(\rho\) is a decoration of each of the components of \(L\).
A decoration of \(\rho\) induces an equivalent decoration of any conjugate \(g^{-1} \rho g\) in a similar way.

Generically a knot \(K\) has two decorations because a diagonalizable element of \(\slg\) has two eigenspaces.
When \(\tr \rho(x) = \pm 2\) but \(\rho \ne \pm 1\) is nontrivial (i.e. when \(\rho\) is boundary-parabolic) there is only one decoration, and when \(\rho(x) = \pm 1\) is trivial there are infinitely many.
Similar statements apply to links and tangles.

One can also view decorations in terms of peripheral subgroups.
The \defemph{exterior} \(\extr L \defeq S^3 \setminus \nu(L)\) of \(L\) is the complement of an open regular neighborhood of \(L\).
It is a compact manifold with boundary \(\partial \extr L = T_1 \amalg \cdots \amalg T_n\) a disjoint union of tori, one for each component of \(L\).
We call the image \(H_j \subset \pi_1(\comp L)\) of \(\pi_1(T_j)\) the \defemph{peripheral subgroup} associated to the component \(L_j\); as with meridians these are unique up to conjugation.
Each \(H_j\) is isomorphic to \(\ZZ^2\), generated by the meridian \(\mer_j\) and a \defemph{longitude} \(\lon_j\) as in \cref{fig:figure-eight-ML}.
Because \(\ZZ^{2}\) is abelian the choice of decoration gives a basis where the meridian and longitude are lower-triangular:
\begin{equation}
  \label{eq:lower triangular conjugate}
  \rho(\mer_j)
  \sim
  \begin{pmatrix}
    m_j & 0 \\
    * & m_j^{-1}
  \end{pmatrix}
  , \quad
  \rho(\lon_j)
  \sim
  \begin{pmatrix}
    \ell_j & 0 \\
    * & \ell_j^{-1}
  \end{pmatrix}
\end{equation}
A decoration is sometimes defined as an identification of \(\rho(H_{j})\) with the group of upper-triangular matrices \cite[Proposition 4.6]{Garoufalidis2015}; here we use lower-triangular matrices to match our conventions on face maps.

\begin{marginfigure}
\begingroup%
  \makeatletter%
  \providecommand\color[2][]{%
    \errmessage{(Inkscape) Color is used for the text in Inkscape, but the package 'color.sty' is not loaded}%
    \renewcommand\color[2][]{}%
  }%
  \providecommand\transparent[1]{%
    \errmessage{(Inkscape) Transparency is used (non-zero) for the text in Inkscape, but the package 'transparent.sty' is not loaded}%
    \renewcommand\transparent[1]{}%
  }%
  \providecommand\rotatebox[2]{#2}%
  \newcommand*\fsize{\dimexpr\f@size pt\relax}%
  \newcommand*\lineheight[1]{\fontsize{\fsize}{#1\fsize}\selectfont}%
  \ifx\svgwidth\undefined%
    \setlength{\unitlength}{139.94114399bp}%
    \ifx\svgscale\undefined%
      \relax%
    \else%
      \setlength{\unitlength}{\unitlength * \real{\svgscale}}%
    \fi%
  \else%
    \setlength{\unitlength}{\svgwidth}%
  \fi%
  \global\let\svgwidth\undefined%
  \global\let\svgscale\undefined%
  \makeatother%
  \begin{picture}(1,0.8768547)%
    \lineheight{1}%
    \setlength\tabcolsep{0pt}%
    \put(0,0){\includegraphics[width=\unitlength,page=1]{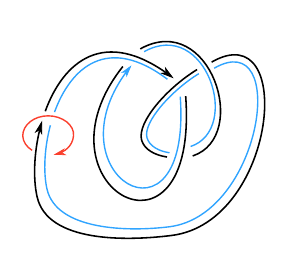}}%
    \put(0.02297167,0.45108763){\color[rgb]{0.98039216,0.24705882,0.17647059}\makebox(0,0)[lt]{\lineheight{1.25}\smash{\begin{tabular}[t]{l}$\mathfrak{m}$\end{tabular}}}}%
    \put(0.31327358,0.13301971){\color[rgb]{0.18823529,0.64313725,1}\makebox(0,0)[lt]{\lineheight{1.25}\smash{\begin{tabular}[t]{l}$\mathfrak{l}$\end{tabular}}}}%
  \end{picture}%
\endgroup%

  \caption{A meridian \(\mer\) (in {\color{slred} red}) and longitude \(\lon\) (in {\color{slblue} blue}) for the figure-eight knot.}
  \label{fig:figure-eight-ML}
\end{marginfigure}

In particular a decoration determines preferred eigenvalues \(m_j\) of each \(\rho(\mer_j)\) and \(\ell_j\) of each \(\rho(\lon_j)\).
We can characterize this in terms of the homomorphism 
\[
  \delta : \homol{\partial \extr L; \ZZ} \to \CC^{\times}
\]
with
\[
  \rho(x) \text{ conjugate to }
  \begin{pmatrix}
    \delta(x) & 0 \\
    * & \delta(x)^{-1}
  \end{pmatrix}
  .
\]
Here we identify the union of the peripheral subgroups \( \bigoplus_{j} \pi_{1}(T_j) \) with the homology \(\homol{\partial \extr L; \ZZ}\) of the boundary of the link exterior.

\begin{theorem}
  \label{thm:decoration-octahedral}
  Let \(D\) be a diagram of a link \(L\).
  For each component \(j\) of \(D\) let \(\mer_{j}\) be the meridian determined by the orientation and \(\lonb_{j}\) the blackboard-framed longitude.
  A \(\chi\)-coloring of \(D\) determines an decoration of its holonomy representation \(\rho\) with distinguished eigenvalues
  \begin{align}
    \label{eq:meridian-eigenvalue}
    \delta(\mer_j) &= m_j
    \\
    \label{eq:longitude-eigenvalue}
    \delta(\lonb_j) &= \prod_{k} b_k^{\eta_k}
  \end{align}
  where the product is over all segments in component \(j\) and
  \[
    \eta_k
    \defeq
    \begin{cases}
      1 & \text{if segment \(k\) is over-under,}
      \\
      -1 & \text{if it is  under-over, and}
      \\
      0 & \text{otherwise.}
    \end{cases}
  \]
  The zero-framed longitude \(\lon_{j}\) has
  \(
    \delta(\lon_j) = m^{-w_j}\delta(\lonb_j)
  \)
  where \(w_{j}\) is the writhe of component \(j\).
\end{theorem}

\begin{proof}
  A segment with color \(\chi = (a, b, m)\) has meridian conjugate to the matrix
  \[
    \begin{bmatrix}
      a & -(a- m)b \\
      (a-1/m)/b & m + m^{-1} - a
    \end{bmatrix}
  \]
  of \cref{eq:meridian-factorization} and its left \(m\)-eigenspace is spanned by \((1/m - a, (a-m)b)\).
  This shows that the \(\chi\)-coloring parameters determine a decoration of their holonomy representation with the claimed meridian eigenvalues.
  We prove the claim about the longitudes in \cref{thm:longitude-eigenvalue-lemma}.
\end{proof}

\begin{example}
  In \cref{fig:trefoil-longitude} the blue curve is \(\lonb\), so it is given by
  \[
    \lonb = \beta_1^- \beta_2^+ \beta_3^- \beta_1^+ \beta_2^- \beta_3^+.
  \]
  Notice that a crossing can appear twice in the product in \cref{eq:blackboard-framed-longitude}, and for knots they always do.
  The zero-framed longitude is \(\lon = \lonb - 3\mer\) because this diagram has writhe \(3\).
\end{example}

\begin{marginfigure}
  \centering
\begingroup%
  \makeatletter%
  \providecommand\color[2][]{%
    \errmessage{(Inkscape) Color is used for the text in Inkscape, but the package 'color.sty' is not loaded}%
    \renewcommand\color[2][]{}%
  }%
  \providecommand\transparent[1]{%
    \errmessage{(Inkscape) Transparency is used (non-zero) for the text in Inkscape, but the package 'transparent.sty' is not loaded}%
    \renewcommand\transparent[1]{}%
  }%
  \providecommand\rotatebox[2]{#2}%
  \newcommand*\fsize{\dimexpr\f@size pt\relax}%
  \newcommand*\lineheight[1]{\fontsize{\fsize}{#1\fsize}\selectfont}%
  \ifx\svgwidth\undefined%
    \setlength{\unitlength}{146.04369736bp}%
    \ifx\svgscale\undefined%
      \relax%
    \else%
      \setlength{\unitlength}{\unitlength * \real{\svgscale}}%
    \fi%
  \else%
    \setlength{\unitlength}{\svgwidth}%
  \fi%
  \global\let\svgwidth\undefined%
  \global\let\svgscale\undefined%
  \makeatother%
  \begin{picture}(1,0.59454636)%
    \lineheight{1}%
    \setlength\tabcolsep{0pt}%
    \put(0,0){\includegraphics[width=\unitlength,page=1]{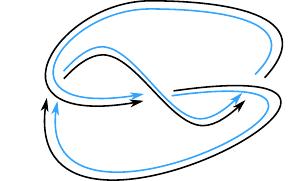}}%
    \put(0.50406201,0.32576861){\makebox(0,0)[lt]{\lineheight{1.25}\smash{\begin{tabular}[t]{l}$2$\end{tabular}}}}%
    \put(0.0424798,0.25839334){\makebox(0,0)[lt]{\lineheight{1.25}\smash{\begin{tabular}[t]{l}$1$\end{tabular}}}}%
    \put(0.80068852,0.2106517){\makebox(0,0)[lt]{\lineheight{1.25}\smash{\begin{tabular}[t]{l}$3$\end{tabular}}}}%
  \end{picture}%
\endgroup%

  \caption{
    Here the blue curve is the blackboard-framed longitude \(\lonb\).
  }
  \label{fig:trefoil-longitude}
\end{marginfigure}

Below we re-derive the meridian eigenvalues geometrically; this is both for completeness and to justify our claim about \(m\)-hyperbolicity equations in the proof of \cref{thm:gluing-holds}.
We then use similar methods to derive the longitude formula \eqref{eq:longitude-eigenvalue}.

Thinking of the boundary torus \(T_j\) of \(\extr L\) as the boundary of a cusp it has an affine structure locally modeled on the Euclidean plane \(\CC\).
The holonomy acts by affine transformations, and the eigenvalues \(m_j\), \(\ell_j\) are related to the scaling factors of these transformations.
This perspective lets us compute the holonomies directly from an ideal triangulation of \(\comp L\).
By truncating our tetrahedra we get a triangulation of the cusps and we can read off the eigenvalues in terms of the shapes.
We refer to \cite[Section 4.3]{Purcell2020} for a general discussion and \cite[Section 4]{Kim2016} for more details in the context of the octahedral decomposition.

\begin{definition}
  \label{def:cusp-holonomy}
  Let \(\gamma\) be an oriented simple curve in the boundary \(T\) of a cusp of \(\extr L\), which we triangulate by truncating an ideal triangulation of \(\comp L\).
  Isotope \(\gamma\) so it intersects only edges of the triangulation transversely and cuts a single corner off of each triangle.
  This corner is associated to the edge of an ideal tetrahedron, and we assign it the shape parameter \(z_k\) of that edge.
  The \defemph{holonomy} of \(\gamma\) is
  \[
    \hol(\gamma) \defeq
    \prod_{k=1}^{n} z_k^{\epsilon_k}
  \]
  where the product is over all the triangles \(\gamma\) passes through,
  \[
    \epsilon_k
    =
    \begin{cases}
      +1 & \text{if the corner is right of \(\gamma\),}
      \\
      -1 & \text{if the corner is left of \(\gamma\),}
    \end{cases}
  \]
  and we view the boundary triangles from outside \(\comp L\).\note{
    This is the opposite of the usual convention, which corresponds to our choice in \cref{def:fractional-linear-action}.
  }
We give an example in \Cref{fig:cusp-triangulation-holonomy-example}.
\end{definition}
\begin{marginfigure}
  \centering
\begingroup%
  \makeatletter%
  \providecommand\color[2][]{%
    \errmessage{(Inkscape) Color is used for the text in Inkscape, but the package 'color.sty' is not loaded}%
    \renewcommand\color[2][]{}%
  }%
  \providecommand\transparent[1]{%
    \errmessage{(Inkscape) Transparency is used (non-zero) for the text in Inkscape, but the package 'transparent.sty' is not loaded}%
    \renewcommand\transparent[1]{}%
  }%
  \providecommand\rotatebox[2]{#2}%
  \newcommand*\fsize{\dimexpr\f@size pt\relax}%
  \newcommand*\lineheight[1]{\fontsize{\fsize}{#1\fsize}\selectfont}%
  \ifx\svgwidth\undefined%
    \setlength{\unitlength}{94.49648952bp}%
    \ifx\svgscale\undefined%
      \relax%
    \else%
      \setlength{\unitlength}{\unitlength * \real{\svgscale}}%
    \fi%
  \else%
    \setlength{\unitlength}{\svgwidth}%
  \fi%
  \global\let\svgwidth\undefined%
  \global\let\svgscale\undefined%
  \makeatother%
  \begin{picture}(1,0.88791976)%
    \lineheight{1}%
    \setlength\tabcolsep{0pt}%
    \put(0,0){\includegraphics[width=\unitlength,page=1]{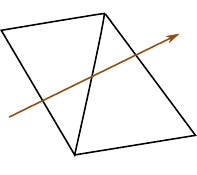}}%
    \put(0.21882778,0.02994896){\makebox(0,0)[lt]{\lineheight{1.25}\smash{\begin{tabular}[t]{l}$z_1$\end{tabular}}}}%
    \put(0.62152997,0.79143808){\makebox(0,0)[lt]{\lineheight{1.25}\smash{\begin{tabular}[t]{l}$z_2$\end{tabular}}}}%
    \put(0.87828525,0.51609974){\color[rgb]{0.56862745,0.30588235,0.05882353}\makebox(0,0)[lt]{\lineheight{1.25}\smash{\begin{tabular}[t]{l}$\gamma$\end{tabular}}}}%
  \end{picture}%
\endgroup%

  \caption{A curve \(\gamma\) in the boundary triangulation, viewed from \emph{outside} the manifold.
  We assign it the holonomy \(\hol(\gamma) = z_1 z_2^{-1}\).}
  \label{fig:cusp-triangulation-holonomy-example}
\end{marginfigure}

\begin{proposition}
  Consider an ideal triangulation of \(\comp L\) with with holonomy \(\rho : \pi_1(\comp L) \to \slg\).
  \(\hol\) defines a homomorphism
  \[
    \hol : \homol{\partial \extr L; \ZZ} \to \CC^{\times}
  \]
  and any decoration of \(\rho\) has
  \[
    \delta(x)^2 = \hol(x) \text{ for all } x \in \homol{\partial \extr L; \ZZ}.\qedhere
  \]
\end{proposition}
\begin{proof}
  The square root comes from the difference between the action of \(\CC^{\times}\) on \(\CC\)  by multiplication and the action of \(\slg\) by fractional linear transformations.
  Geometrically, the holonomy \(\hol(x)\) represents a scaling and rotation of \(\CC\) by multiplication by an element of \(\CC^{\times}\).
  On the other hand we can also compute this action directly from the matrix \(\rho(x) \in \slg\).
  Using the decoration \(\delta\), \(\rho(x)\) is conjugate to
  \[
    \tilde \rho(x)
    =
    \begin{pmatrix}
      \delta(x) & 0 \\
      b & \delta(x)^{-1}
    \end{pmatrix}
  \]
  for some \(b \in \CC\) and (following \cref{def:fractional-linear-action})
  \[
    z \cdot 
    \tilde \rho(x) = \frac{\delta(x) z + b}{\delta(x)^{-1}} = \delta(x)^{2} z + b \delta(x)^{-1}
  \]
  We are interested only in the scaling action \(\delta(x)^{2}\), which is the \emph{square} of the eigenvalue \(\delta(x)\) as claimed.
\end{proof}

For a \(\pslg\) representation the eigenvalues \(\delta(x)\) are only determined up to sign, so knowing \(\hol\) determines \(\delta\).
For \(\slg\) representations one needs to work out how to choose the sign, which is related to obstruction classes (\cref{rem:obstruction-classes}).

\begin{figure}
  \centering
  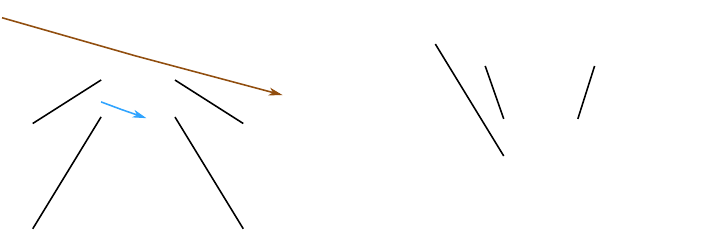
  \caption{Curves in the boundary of the octahedral decomposition near a positive crossing.}
  \label{fig:boundary-holonomy-positive}
\end{figure}

\begin{figure}
  \centering
  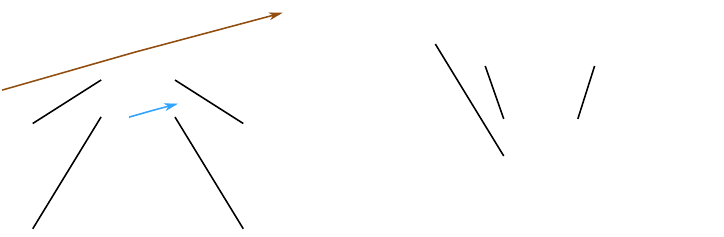
  \caption{Curves in the boundary of the octahedral decomposition near a negative crossing.}
  \label{fig:boundary-holonomy-negative}
\end{figure}

We first compute the holonomy of the meridian \(\mathfrak{m}\) of a segment of a \(\chi\)-colored diagram.
We already know that the answer should be \(m^{2}\) when the segment is assigned the shape  \((a,b,m)\), so our goal is to check this against the geometry.
We can express \(\mathfrak{m}\) as a composition of the curves \(\sigma^{\pm}, \tau^{\pm}\) shown in \cref{fig:boundary-holonomy-positive}.
The exact form depends on the type of segment:
\begin{figure}
  \centering
\begingroup%
  \makeatletter%
  \providecommand\color[2][]{%
    \errmessage{(Inkscape) Color is used for the text in Inkscape, but the package 'color.sty' is not loaded}%
    \renewcommand\color[2][]{}%
  }%
  \providecommand\transparent[1]{%
    \errmessage{(Inkscape) Transparency is used (non-zero) for the text in Inkscape, but the package 'transparent.sty' is not loaded}%
    \renewcommand\transparent[1]{}%
  }%
  \providecommand\rotatebox[2]{#2}%
  \newcommand*\fsize{\dimexpr\f@size pt\relax}%
  \newcommand*\lineheight[1]{\fontsize{\fsize}{#1\fsize}\selectfont}%
  \ifx\svgwidth\undefined%
    \setlength{\unitlength}{174.62414932bp}%
    \ifx\svgscale\undefined%
      \relax%
    \else%
      \setlength{\unitlength}{\unitlength * \real{\svgscale}}%
    \fi%
  \else%
    \setlength{\unitlength}{\svgwidth}%
  \fi%
  \global\let\svgwidth\undefined%
  \global\let\svgscale\undefined%
  \makeatother%
  \begin{picture}(1,0.308158)%
    \lineheight{1}%
    \setlength\tabcolsep{0pt}%
    \put(0,0){\includegraphics[width=\unitlength,page=1]{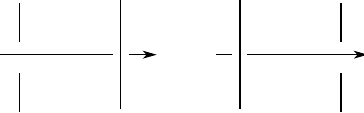}}%
    \put(0.0784108,0.09341107){\makebox(0,0)[lt]{\lineheight{1.25}\smash{\begin{tabular}[t]{l}$k$\end{tabular}}}}%
    \put(0.35758179,0.09341107){\makebox(0,0)[lt]{\lineheight{1.25}\smash{\begin{tabular}[t]{l}$k+1$\end{tabular}}}}%
    \put(0.68399726,0.09340828){\makebox(0,0)[lt]{\lineheight{1.25}\smash{\begin{tabular}[t]{l}$k$\end{tabular}}}}%
    \put(0.96316841,0.09340828){\makebox(0,0)[lt]{\lineheight{1.25}\smash{\begin{tabular}[t]{l}$k+1$\end{tabular}}}}%
  \end{picture}%
\endgroup%

  \caption{An over-under (left) and under-over (right) segment between crossings \(k\) and \(k+1\).}
  \label{fig:segment-types}
\end{figure}
\begin{lemma}
  Consider a segment between crossings labeled \(k\) and \(k+1\).
  Using the segment types in \cref{fig:segment-types},
  \begin{equation*}
    \begin{aligned}
      \tau^+_k  \sigma^{-}_{k+1}
      &=
      \mer
      &
      &\text{at an over-under segment,}
      \\
      \tau^-_k  \sigma^{+}_{k+1}
      &=
      \mer
      &
      &\text{at an under-over segment, and}
      \\
      \tau^\pm_k 
      &=
      \sigma^{\pm}_{k+1}
      &
      &\text{at an over-over or under-under segment.}
    \end{aligned}
  \end{equation*}
  where \(\mer\) is the meridian of the segment.
\end{lemma}
\begin{proof}
  We can see this directly by composing the curves in \cref{fig:boundary-holonomy-positive,fig:boundary-holonomy-negative} during the gluing.
  Alternately, it follows from the discussion in \cite[Section 4.1]{Kim2016} and in particular \cite[eq.\@ 10]{Kim2016}.
  Notice that their meridians are the inverse of ours.
\end{proof}

\begin{lemma}
  At a positive crossing,
  \begin{align*}
    \hol(\sigma^+)
    &=
    o_1^{-1}
    =
    \frac{m_1}{a_1}
    &
    \hol(\tau^+)
    &=
    o_{1'}
    =
    \frac{m_1}{a_{1'}}
    \\
    \hol(\sigma^-)
    &=
    o_2^{-1}
    =
    {m_2 a_{2}}
    &
    \hol(\tau^-)
    &=
    o_{2'}
    =
    {m_2 a_{2'}}
  \end{align*}
  while at a negative crossing
  \begin{align*}
    \hol(\sigma^+)
    &=
    o_2^{-1}
    =
    \frac{m_2}{a_2}
    &
    \hol(\tau^+)
    &=
    o_{2'}
    =
    \frac{m_2}{a_{2'}}
    \\
    \hol(\sigma^-)
    &=
    o_1^{-1}
    =
    {m_1 a_1}
    &
    \hol(\tau^-)
    &=
    o_{1'}
    =
    {m_1 a_{1'}}
  \end{align*}
  where the \(o_j\) are the shapes of the vertical edges given in \cref{eq:edge-shapes-vertical}.
\end{lemma}
\begin{proof}
  To apply \cref{def:cusp-holonomy} we divide the squares of \cref{fig:boundary-holonomy-positive-over-five-term} into triangles by dividing our octahedra into tetrahedra.
  Using the five-term decomposition
  this looks like \cref{fig:boundary-holonomy-positive-over-five-term},
  \begin{marginfigure}
    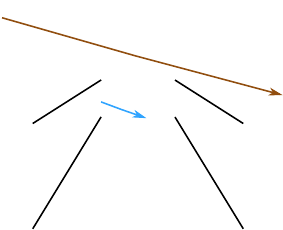
    \caption{Subdividing using the five-term decomposition to get a triangulation of the boundary.}
    \label{fig:boundary-holonomy-positive-over-five-term}
  \end{marginfigure}
  and then at a positive crossing we have
  \begin{equation*}
    \hol(\sigma^+)
    =
    (z_1^0)^{-1}
    \frac{m_1}{a_1}
  \end{equation*}
  and
  \[
    \hol(\sigma^-)
    =
    (z_W^1)^{-1}(z_S^{1})^{-1}
    =
    (z_1^{0})^{-1}
    =
    {m_2 a_2}
  \]
  because we view the boundary from outside the link exterior.
  The other cases follow from similar computations.
\end{proof}

\begin{proof}[Geometric proof of \cref{eq:meridian-eigenvalue}]
  Consider a segment labeled with \(\chi = (a, b, m)\) between crossings \(k\) and  \(k+1\).
  If it is an over-under segment, then
  \[
    \hol(\mer) = \hol(\tau_k^+  \sigma_{k+1}^{-}) = \frac{m}{a} m a = m^{2}
  \]
  as claimed.
  Notice that this computation does not rely on the signs of the crossings.
  Similarly at an under-over segment we have
  \[
    \hol(\mer) = \hol(\tau_k^-  \sigma_{k+1}^{+}) = {m a} \frac{m}{a} = m^2.
  \]
  Taking the square root gives \cref{eq:meridian-eigenvalue}.
  We know that \(m\) (and not \(-m\) ) is the right sign because we can explicitly check that it is an eigenvalue of \eqref{eq:meridian-factorization}.
  We only need to check \cref{eq:meridian-eigenvalue} for one segment of each link component, and at least one segment of any component of any link diagram is either over-under or under-over.
  (Actually, this is only true if the component has at least one crossing.  By adding kinks we can always assume this.)
\end{proof}

\begin{definition}
  \label{def:m-hyperbolicity}
  The \defemph{\(m\)-hyperbolicity equation} for a segment is
  \[
   \frac{o'}{o}
   =
    \begin{cases}
      m^{2} & \text{ if the segment is over-under or under-over, and }
      \\
      1 & \text{ otherwise.}
    \end{cases}
  \]
  where \(o\) is the shape of the vertical edge at the start of the segment and \(o'\) is the shape of the vertical edge at the end.
\end{definition}
We just showed that in any \(\chi\)-coloring the \(m\)-hyperbolicity equations automatically hold.
As discussed in the proof of \cref{thm:gluing-holds} they imply the gluing equations for the vertical edges.

Next we consider the longitudes.
Our convention is to obtain the blackboard-framed longitude \(\lonb\) by pushing off to the right, so it is given by
\begin{equation}
  \label{eq:blackboard-framed-longitude}
  \lonb
  =
  \prod_{k} \beta_k^{\eta_k}
\end{equation}
where \(\eta_k\) is \(+\) at an overcrossing and \(-\) at an undercrossing
and the product is over all intersections of our component with the rest of the diagram.
The zero-framed longitude is
\begin{equation}
  \label{eq:zero-framed-longitude}
  \lon = \lonb - w \mer
\end{equation}
where \(w\) is the writhe of the link component we are considering.

\begin{lemma}
  \label{lemma:longitude-holonomy}
  At a positive crossing
  \[
    \begin{aligned}
      \hol(\beta^+)
      &=
      a_2 \frac{b_{1'}}{b_1}
      \\
      \hol(\beta^-)
      &=
      \frac{1}{a_{1'}}  \frac{b_{2}}{b_{2'}}
    \end{aligned}
  \]
  and at a negative crossing
  \[
    \begin{aligned}
      \hol(\beta^+)
      &=
      \frac{1}{a_{1'}}  \frac{b_{2'}}{b_2} 
      \\
      \hol(\beta^-)
      &=
      {a_{2}}  \frac{b_{1}}{b_{1'}} 
    \end{aligned}
    \qedhere
  \]
\end{lemma}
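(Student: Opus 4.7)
The plan is to imitate the computation just carried out for $\sigma^\pm$ and $\tau^\pm$: pick a subdivision of the octahedron into ideal tetrahedra so that the boundary squares near $P_1$ and $P_2$ become triangulated, and then apply \cref{def:cusp-holonomy} corner by corner along the curve $\beta^\pm$. The curves $\beta^\pm$ are drawn in \cref{fig:boundary-holonomy-positive,fig:boundary-holonomy-negative}: each one cuts across the boundary square diagonally from one vertical edge of the octahedron to the opposite one, so after subdivision it traverses several triangles. The product of the associated edge shapes (with signs determined by whether the cut-off corner lies to the right or left of $\beta^\pm$, viewed from outside $\comp L$) gives $\hol(\beta^\pm)$.

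First I would carry out the positive-crossing case using the four-term decomposition, since $\beta^+$ lives in the boundary near $P_1$ (which touches all four tetrahedra $\tau_N,\tau_W,\tau_S,\tau_E$) and $\beta^-$ lives near $P_2$ (similarly). Using \cref{table:positive-crossing-data-four} and the identifications of horizontal and vertical edges from \cref{def:shape-assignments}, each corner along $\beta^+$ contributes either a horizontal shape ($o_N,o_E$, which are ratios of $b$-variables) or a vertical shape ($o_{1'}^{\pm 1}$ or $o_{2}^{\pm 1}$, which are combinations of $a$ and $m$). After assembling these contributions and cancelling pieces that cross the diagonal $P_1P_2$ edge shared by adjacent tetrahedra, the result should collapse to $a_2\,b_{1'}/b_1$, which is consistent with the expected mix of an $a$-variable from the vertical crossing and a $b$-ratio from the horizontal traversal. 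The same bookkeeping yields $\hol(\beta^-) = b_2/(a_{1'}b_{2'})$ on the $P_2$ side.

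For the negative-crossing case I would use \cref{table:negative-crossing-data-four}, which only changes the signs $\epsilon_j$ of the four tetrahedra. Changing the signs replaces each $z^k$ contribution with its inverse in the cross-ratio conventions of \cref{sec:tetrahedra}, so effectively the roles of $\beta^+$ and $\beta^-$ swap (up to inverting variables). Running the same corner computation then gives the asserted formulas.

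The main obstacle is purely bookkeeping: correctly identifying, for each corner along $\beta^\pm$, which edge of which tetrahedron it sits at, what the corresponding shape parameter is in terms of $\chi_i$, and whether the corner lies to the right or left of $\beta^\pm$ from the outward-facing viewpoint specified in \cref{def:cusp-holonomy}. There is a small consistency check that any internal cancellations along the diagonal $P_1P_2$ edge shared by adjacent tetrahedra in the four-term decomposition behave as expected; one could alternatively perform the same computation with the five-term decomposition, where the extra central tetrahedron $\tau_m$ with shape $a_{1'}/a_1$ naturally produces the $a$-factors appearing in the statement, and use the proof of \cref{thm:holonomies-agree} to cross-check that the two triangulations give the same $\hol(\beta^\pm)$.
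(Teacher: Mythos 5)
Your plan is essentially the right method (corner-by-corner application of \cref{def:cusp-holonomy} to the cusp triangulation), but you run it through the four-term decomposition, whereas the paper's proof uses the five-term decomposition, and the difference in effort is substantial. In the five-term decomposition only \(\tau_1\) and \(\tau_{1'}\) are incident to \(P_1\) (and only \(\tau_2,\tau_{2'}\) to \(P_2\)), so the cusp square is cut into just two triangles and \(\beta^+\) cuts exactly one corner of each; the entire computation is
\[
  \hol(\beta^+) = z_1^1 z_{1'}^1 = \left(1-\frac{a_1}{m_1}\right)^{-1}\left(1-\frac{a_{1'}}{m_1}\right) = a_2\,\frac{b_{1'}}{b_1},
\]
the last step by \cref{prop:b-gluing-relations}. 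Your four-term route cuts the same square into four triangles meeting at the interior edge \(P_1P_2\), so you must track more corners, fix an isotopy class of \(\beta^+\) relative to the central vertex (different choices agree only after invoking the gluing relation around \(P_1P_2\)), and the raw answer is a product of \(b\)-ratios that still needs \cref{prop:b-gluing-relations} to reach the stated form. Both decompositions give the same result, but the five-term one is also what the paper already used for \(\sigma^\pm,\tau^\pm\) (\cref{fig:boundary-holonomy-positive-over-five-term}), so it is the natural continuation of "the computation just carried out."

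Two corrections to details in your plan. First, \(\tau_m\) has vertices \(P_-P_-'P_+P_+'\), none of which lie on the link, so it contributes nothing to the cusp triangulation at \(P_1\) or \(P_2\); the \(a\)-factors do not come from \(\tau_m\) but from the shapes \(z_1^1, z_{1'}^1\) of \(\tau_1,\tau_{1'}\) themselves. Second, flipping the sign \(\epsilon\) of a tetrahedron does not invert its edge shapes; it exchanges the roles of \(z^1\) and \(z^2\) (and the vertex order is also reversed at a negative crossing), so the negative-crossing case is a parallel computation rather than a formal inversion of the positive one.
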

\begin{proof}
  For \(\beta^+\) at a positive crossing, we can use the five-term decomposition as before to compute
  \begin{equation*}
    \hol(\beta^+)
    =
    z_1^1
    z_{1'}^1
    =
    \left(1 - \frac{a_1}{m_1}\right)^{-1}
    \left(1 - \frac{a_{1'} }{m_1}\right)
    =
    a_{2} \frac{b_{1'}}{b_1}
  \end{equation*}
  using \cref{eq:positive-a-relations}.
  The other computations follow similarly.
\end{proof}

\begin{marginfigure}
  \centering
\begingroup%
  \makeatletter%
  \providecommand\color[2][]{%
    \errmessage{(Inkscape) Color is used for the text in Inkscape, but the package 'color.sty' is not loaded}%
    \renewcommand\color[2][]{}%
  }%
  \providecommand\transparent[1]{%
    \errmessage{(Inkscape) Transparency is used (non-zero) for the text in Inkscape, but the package 'transparent.sty' is not loaded}%
    \renewcommand\transparent[1]{}%
  }%
  \providecommand\rotatebox[2]{#2}%
  \newcommand*\fsize{\dimexpr\f@size pt\relax}%
  \newcommand*\lineheight[1]{\fontsize{\fsize}{#1\fsize}\selectfont}%
  \ifx\svgwidth\undefined%
    \setlength{\unitlength}{78.141675bp}%
    \ifx\svgscale\undefined%
      \relax%
    \else%
      \setlength{\unitlength}{\unitlength * \real{\svgscale}}%
    \fi%
  \else%
    \setlength{\unitlength}{\svgwidth}%
  \fi%
  \global\let\svgwidth\undefined%
  \global\let\svgscale\undefined%
  \makeatother%
  \begin{picture}(1,0.64456235)%
    \lineheight{1}%
    \setlength\tabcolsep{0pt}%
    \put(0,0){\includegraphics[width=\unitlength,page=1]{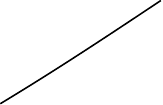}}%
    \put(0.16673431,0.37511578){\color[rgb]{0,0,0}\makebox(0,0)[lt]{\lineheight{1.25}\smash{\begin{tabular}[t]{l}$b$\end{tabular}}}}%
    \put(0.5647476,0.07920663){\color[rgb]{0,0,0}\makebox(0,0)[lt]{\lineheight{1.25}\smash{\begin{tabular}[t]{l}$b'$\end{tabular}}}}%
    \put(0,0){\includegraphics[width=\unitlength,page=2]{longitude-rules-b.pdf}}%
    \put(0.33852614,0.04798977){\color[rgb]{0,0,0}\makebox(0,0)[lt]{\lineheight{1.25}\smash{\begin{tabular}[t]{l}$r'$\end{tabular}}}}%
    \put(0.09672275,0.23354638){\color[rgb]{0,0,0}\makebox(0,0)[lt]{\lineheight{1.25}\smash{\begin{tabular}[t]{l}$r$\end{tabular}}}}%
  \end{picture}%
\endgroup%
 
  \caption{Following the gold strand northwest to southeast this crossing contributes a factor of \((r'/r)(b'/b)^{\eta}\) to the longitude holonomy \(\hol(\lon)\).
  }
  \label{fig:longitude-rules-b} 
\end{marginfigure}

\begin{lemma}
  \label{thm:longitude-eigenvalue-lemma}
  As claimed in \cref{eq:longitude-eigenvalue} the eigenvalue of the blackboard-framed longitude is given by 
  \[
    \delta(\lonb_j) = \prod_{k} b_k^{\eta_k}
    \text{ for }
    \eta_k
    \defeq
    \begin{cases}
      1 & \text{if segment \(k\) is over-under,}
      \\
      -1 & \text{if it is  under-over, and}
      \\
      0 & \text{otherwise.}
    \end{cases}
    \qedhere
  \]
\end{lemma}

\begin{proof}
  Fix a component \(j\) and abbreviate \(\lonb_{j} = \lonb\).
  We first show that \(\hol(\lonb) = \delta(\lonb)^{2}\) by taking the product over the crossings of our diagram.

  The expressions in \cref{lemma:longitude-holonomy} follow a simple pattern in terms of the \defemph{region variables} of \cref{sec:region-equations}.
  The idea is to assign variables \(r_k\) to the regions of the diagram so ratios of adjacent region variables give the \(a\)-variable of the strand between them, as in \cref{fig:region-variable-rule}.
  It is easy to see (\cref{lemma:shapings-give-region-vars}) we can always assign region variables to any \(\chi\)-colored diagram.

  Once we do this we can summarize \cref{lemma:longitude-holonomy} by saying that the contribution of the crossing in \cref{fig:longitude-rules-b} is
  \begin{equation}
    \label{eq:simple-crossing-contribution}
    \frac{r'}{r} \left( \frac{b'}{b} \right)^{\eta}
  \end{equation}
  where \(\eta\) is \(1\) if the gold strand passes over the black strand and \(-1\) if it passes under.
  The variables \(r,r'\) and \(b,b'\) correspond to the regions and segments adjacent to the crossing.
  We can prove \cref{eq:simple-crossing-contribution} by a trivial case-by-case check, as usual.

  Now we need to translate our product over crossings into a product over segments.
  First consider an over-under segment, like in \cref{fig:cancelling-a-variables}.
  \begin{marginfigure}
\begingroup%
  \makeatletter%
  \providecommand\color[2][]{%
    \errmessage{(Inkscape) Color is used for the text in Inkscape, but the package 'color.sty' is not loaded}%
    \renewcommand\color[2][]{}%
  }%
  \providecommand\transparent[1]{%
    \errmessage{(Inkscape) Transparency is used (non-zero) for the text in Inkscape, but the package 'transparent.sty' is not loaded}%
    \renewcommand\transparent[1]{}%
  }%
  \providecommand\rotatebox[2]{#2}%
  \newcommand*\fsize{\dimexpr\f@size pt\relax}%
  \newcommand*\lineheight[1]{\fontsize{\fsize}{#1\fsize}\selectfont}%
  \ifx\svgwidth\undefined%
    \setlength{\unitlength}{96.05856514bp}%
    \ifx\svgscale\undefined%
      \relax%
    \else%
      \setlength{\unitlength}{\unitlength * \real{\svgscale}}%
    \fi%
  \else%
    \setlength{\unitlength}{\svgwidth}%
  \fi%
  \global\let\svgwidth\undefined%
  \global\let\svgscale\undefined%
  \makeatother%
  \begin{picture}(1,0.68201797)%
    \lineheight{1}%
    \setlength\tabcolsep{0pt}%
    \put(0,0){\includegraphics[width=\unitlength,page=1]{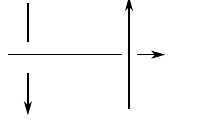}}%
    \put(0.10674642,0.01836035){\makebox(0,0)[lt]{\lineheight{1.25}\smash{\begin{tabular}[t]{l}$a$\end{tabular}}}}%
    \put(0.59082612,0.01836035){\makebox(0,0)[lt]{\lineheight{1.25}\smash{\begin{tabular}[t]{l}$a'$\end{tabular}}}}%
    \put(-0.04160055,0.29943892){\makebox(0,0)[lt]{\lineheight{1.25}\smash{\begin{tabular}[t]{l}$b_0$\end{tabular}}}}%
    \put(0.33317084,0.29943892){\makebox(0,0)[lt]{\lineheight{1.25}\smash{\begin{tabular}[t]{l}$b_1$\end{tabular}}}}%
    \put(0.76259628,0.29943892){\makebox(0,0)[lt]{\lineheight{1.25}\smash{\begin{tabular}[t]{l}$b_2$\end{tabular}}}}%
    \put(-0.00866841,0.12597966){\makebox(0,0)[lt]{\lineheight{1.25}\smash{\begin{tabular}[t]{l}$r_0$\end{tabular}}}}%
    \put(0.33487211,0.12597966){\makebox(0,0)[lt]{\lineheight{1.25}\smash{\begin{tabular}[t]{l}$r_1$\end{tabular}}}}%
    \put(0.73306657,0.12597966){\makebox(0,0)[lt]{\lineheight{1.25}\smash{\begin{tabular}[t]{l}$r_2$\end{tabular}}}}%
  \end{picture}%
\endgroup%

    \caption{Region and segment variables contributing to the longitude.}
    \label{fig:cancelling-a-variables}
  \end{marginfigure}
  The crossings at each end contribute a factor
  \[
    a \left( \frac{b_1}{b_0} \right)^{+1} \cdot \frac{1}{a'} \left( \frac{b_2}{b_1} \right)^{-1}
    =
    \frac{r_1}{r_0} \frac{b_1}{b_0} \frac{r_2}{r_1} \frac{b_1}{b_2}
    =
    \frac{r_2}{r_0} \frac{1}{b_0 b_2} b_1^{2}
  \]
  to \(\hol(\lonb)\).
  In particular, we see that \(r_1\) does not contribute, and the exponent of \(b_1\) is \(+2\).
  If instead we had an over-over segment, the contribution would be
  \[
    \frac{r_1}{r_0} \frac{b_1}{b_0} \frac{r_2}{r_1} \frac{b_2}{b_1}
    =
    \frac{r_2}{r_0} \frac{b_2}{b_0}
  \]
  and \(b_1\) does not appear at all.

  More generally, when following a component of a link diagram, the region variables appear as a telescoping product
  \[
    \frac{r_0}{r_1} \frac{r_1}{r_2} \cdots \frac{r_n}{r_0} = 1
  \]
  and the \(b\)-variables only show up with even exponents: \(+2\) if their segment is over-under, \(-2\) if it is under-over, and \(0\) otherwise.
  We have shown that when we write \(\hol(\lonb)\) as a product over segments,
  \[
    \hol(\lonb) = \prod_{k}  b_k^{2\eta_k}
  \]
  so
  \[
    \hol(\lon) = m^{-2w_j} \prod_{k} b_k^{2\eta_k} = \delta(\lon)^{2}.
  \]

It remains only to show that we have taken the correct sign of \(\sqrt {\hol(\lon)}\).
(We have already showed that we picked the right sign of \(\sqrt{\hol(\mer)}\) because \(m\) is an eigenvalue of \eqref{eq:meridian-factorization}, not \(-m\).)
One way to do this is to define a matrix-valued version of \(\hol\).
This is done in detail in the boundary-parabolic case in \cite[Section 4]{Kim2019}; the general argument follows by extending their work to \defemph{deformed Ptolemy varieties}, as in \cite[Section 2]{Yoon2018}.
There is also an elementary argument using the methods of \cite{McphailSnyder2024octahedralcoordinateswirtingerpresentation} so we do not give the details here.
\end{proof}

\begin{remark}
  \label{rem:obstruction-classes}
  When \(\rho\) is a boundary-parabolic \(\pslg\) representation that lifts to an \(\slg\) representation we can always choose a lift with \(\delta(\mer) = 1\).
  However, in general the sign of \(\delta(\lon)\) can be \(-1\), regardless of the choice of lift.
  In fact, for a hyperbolic knot complement we have \(\delta(\lon) = -1\) for any lift of the geometric representation \cite[Corollary 2.4]{Calegari2005}.

  We can think of this as an obstruction to lifting \(\rho\) to a boundary-unipotent \(\slg\)-representation, and in this context \(\delta(\lon) \in \set{1, -1}\) is called the \defemph{obstruction class} \cite{Garoufalidis2015,Kim2019,Cho2020} of the representation.
  Determining the sign of \(\sqrt{\hol(\lon)}\) is closely related to these obstruction classes.
\end{remark}

\section{Gluing equations}
\label{sec:gluing}
The conditions (\ref{eq:a-transf-positive}--\ref{eq:m-transf-positive}) on the \(\chi\)-colors at each crossing are somewhat complicated.
In practice, we can usually make some simplifications: we can either eliminate the \(a\)-variables or the \(b\)-variables, as long as we avoid certain geometrically degenerate solutions.
In this section we make this precise.

\begin{definition}
  \label{def:repvar-shapevar}
  Let \(L\) be a link in \(S^3\) with \(c\) components.
  The \defemph{representation variety} of \(L\) is the set \(\rvar L\) of representations \(\rho : \pi_1(S^3 \setminus L) \to \slg\) of the link complement into \(\slg\).

  Now suppose \(D\) is a diagram of \(L\) with \(s\) segments.
  We associate variables
  \[
    a_1, \dots, a_s, b_1, \dots, b_s \in \CC \setminus \{0\}
  \]
  to the segments of \(D\) and variables
  \[
    m_1, \dots, m_c \in \CC \setminus \{0\}
  \]
  to the components.
  Writing \(C(i)\) for the component of segment \(i\), we assign each segment the color \(\chi_i = (a_i, b_i, m_{C(i)})\).
  The \defemph{\(\chi\)-variety} of \(D\) is the set \(\svar D \subset (\CC \setminus \{0\})^{2s+c}\) of \(\chi\)-colorings satisfying the relations of \cref{def:shaping}.%
  \note{%
    Here \(\mathfrak{P}\) stands for ``Ptolemy'' because the \(\chi\) are Ptolemy coordinates.
    We would use \(\mathfrak{X}\) but this is the usual notation for the character variety, a different object.
  }
\end{definition}

The holonomy map of \cref{lemma:holonomy-is-defined} defines an inclusion \(\svar D \to \rvar L\).
It is not surjective, but it is up to conjugacy: \cref{thm:existence-blanchet}  says that the \(\slg\)-orbit of every \(\rho \in \rvar L\) intersects the image of \(\svar D\).

In this section we will define two sets \(\avar D\) and \(\bvar D\) defined by equations in significantly fewer variables.
\(\bvar D\) comes from eliminating the \(a\)-variables in terms of the \(b\)-variables, but it only detects non-pinched solutions in the sense of \cref{def:pinched-crossing}.
To check membership in \(\bvar D\) we only need to check one equation in the \(b_i\) and \(m_i\) for each segment of \(D\).
In parallel, \(\avar D\) comes from eliminating the \(b\)-variables in terms of the \(a\)-variables, and it only detects non-degenerate solutions.
The equations defining \(\avar D\) are instead associated to the regions of the diagram \(D\): for each region we check that a certain product involving \(a\)-variables and \(m\)-variables is \(1\).

There are examples of geometrically interesting points of \(\svar D\) that do not lie in the image of \(\bvar D\), but \cref{thm:nondegenerate-existence} says that every point of \(\svar D\) with nontrivial (not \(\pm 1\)) holonomy lies in the image of \(\avar D\).
The image of \(\bvar D\) in \(\rvar L\) is interesting in connection with the Volume Conjecture and is further studied in \cite[Section 5]{McphailSnyder2024octahedralcoordinateswirtingerpresentation}.

\subsection{The \texorpdfstring{\(b\)}{b}-variety \texorpdfstring{\(\bvar D\)}{B\_D} and the segment equations}
Let \(D\) be a diagram of \(L\) with \(c\) components and \(s\) segments.
Associate variables \(b_1, \dots, b_s\) to the segments and \(m_1, \dots, m_c\) to the components of \(D\); as before this gives a tuple \((b_i, m_{C(i)})\) for each segment \(i\) of \(D\).
Now consider a crossing of \(D\).
If the crossing is not pinched (which can be checked in terms of the \(b_i\) and \(m_i\) alone), \cref{prop:b-gluing-relations} determines the \(a\)-variables for each segment at the crossing.

\begin{definition}
  Because a segment is adjacent to two crossings, this procedure assigns two different \(a\)-variables to each segment.
  The \defemph{segment equation} of a segment says that the two \(a\)-variables agree.
  The \defemph{\(b\)-variety} \(\bvar D\) of the diagram \(D\) is the set of all \(b_1, \dots, b_s, m_1, \dots, m_c\) satisfying the segment equations.
\end{definition}

\begin{theorem}
  Write \(\svar{D}[\operatorname{np}]\) for the space of non-pinched \(\chi\)-colorings of \(D\).
  There is an invertible map \(\beta : \bvar D \to \svar{D}[\operatorname{np}]\).
\end{theorem}
\begin{proof}
  The segment equations are written in terms of two \(b_i, m_i\) of the three variables of the colors \(\chi_i = (a_i, b_i, m_i)\).
  When the segment equations are satisfied each segment is also assigned a well-defined \(a\)-variable \(a_i\), as discussed above.
  These assignments determine a \(\chi\)-coloring because the equations \eqref{eq:positive-a-relations} imply the braiding relations \(B(\chi_1, \chi_2) = (\chi_{2'}, \chi_{1'})\) at every positive crossing, and similarly for negative crossings.
  Thus \(\beta\) is well-defined.

  We can only compute non-pinched \(\chi\)-colorings in this manner because at a pinched crossing the expressions appearing in \eqref{eq:positive-a-relations} and \eqref{eq:negative-a-relations} are indeterminate.
  Conversely, any non-pinched \(\chi\)-coloring determines a solution of the \(b\)-gluing equations by forgetting the \(a\)-variables.
  This gives an inverse to \(\beta\).
\end{proof}

\begin{example}
  \label{ex:figure-eight-parabolic}
  Consider the diagram \(D\) of the figure-eight knot given in \cref{fig:figure-eight-labeled}.
  For simplicity, we restrict to the boundary-parabolic case where the meridian eigenvalue \(m\) is \(1\).
  We assign \(b\)-variables \(b_1, \dots, b_8\) to the segments of \(D\).
  The equation for segment \(1\) is
  \[
    \frac{b_{4}}{ b_{5}} \frac{ b_{1} -  b_{5}}{b_{1} -  b_{4}}
    =
    \frac{ b_{5} -  b_{1}}{ b_{6} - b_{1}}
  \]

  Following \cite[Example 4.6]{Kim2016} we see that there is a \(3\)-parameter family of solutions given in terms of \(p,q,r\) by
  \begin{align*}
    (b_1, \dots, b_8)
    &=
    \left(
      pr, pr(1 + q \Lambda), - \frac{pr \Lambda(1 + q\Lambda)}{1-p}, \frac{pqr}{1-p}
    \right.
    \\
    &\phantom{=}
    \left.
      -qr, r-qr, - \frac{pr(1-q)\Lambda^2}{1 + p\Lambda}, \frac{pr}{1 + p\Lambda}
    \right)
  \end{align*}
  where \(\Lambda\) satisfies  \(\Lambda^2 + \Lambda + 1\).
  The solution space is parametrized by one discrete parameter \(\Lambda\) and three continuous parameters \(p,q,r\), which can be freely chosen as long as we avoid pinched crossings and all the \(b_i\) are nonzero.
\end{example}

Solving for all of \(\bvar D\) and not just the part with \(m = 1\) is more complicated.
We discuss this further and compute more examples in \cref{sec:twist-region}.

\begin{remark}
  \label{rem:extra-parameters}
  In general our solutions have three extra parameters.
  \textcite[Example 4.6]{Kim2016} explain this as follows: one degree of freedom comes from the homogeneity of the segment equations in the \(b_i\), while the other two come from the arbitrary locations of the extra ideal points \(P_{\pm}\) of the octahedral decomposition (\cref{sec:geometry}).
  This can be seen explicitly using the gauge transformation formulas of \cite{McphailSnyder2024octahedralcoordinateswirtingerpresentation}.
\end{remark}

\subsection{The \texorpdfstring{\(a\)}{a}-variety \texorpdfstring{\(\avar D\)}{A\_D} and the region equations}
\label{sec:region-equations}
Instead of using \cref{prop:b-gluing-relations} to eliminate the \(a\)-variables, we can use \cref{lemma:a-gluing-relations} to eliminate the \(b\)-variables.
However, it is most convenient to do this using a slightly different set of variables.

\begin{definition}
  \label{def:region-variables}
  Let \(D\) be a diagram of a link \(L\) with \(s\) segments; because \(D\) is a \(4\)-valent planar graph there are \(f +1 = 2 + s/2\) regions, one of which is unbounded.
  To any \(\chi\)-coloring of \(D\) we associate \(f\) \defemph{region variables} \(r_0, \dots, r_{f}\) by the rule given in \cref{fig:region-variable-rule}:
  when passing from \(r_{i}\) to \(r_{i'}\) across a strand with color \(\chi = (a, b, m)\) we have \(r_{i'} = a r_{i}\).
\end{definition}

\begin{marginfigure}
\begingroup%
  \makeatletter%
  \providecommand\color[2][]{%
    \errmessage{(Inkscape) Color is used for the text in Inkscape, but the package 'color.sty' is not loaded}%
    \renewcommand\color[2][]{}%
  }%
  \providecommand\transparent[1]{%
    \errmessage{(Inkscape) Transparency is used (non-zero) for the text in Inkscape, but the package 'transparent.sty' is not loaded}%
    \renewcommand\transparent[1]{}%
  }%
  \providecommand\rotatebox[2]{#2}%
  \newcommand*\fsize{\dimexpr\f@size pt\relax}%
  \newcommand*\lineheight[1]{\fontsize{\fsize}{#1\fsize}\selectfont}%
  \ifx\svgwidth\undefined%
    \setlength{\unitlength}{109.92816067bp}%
    \ifx\svgscale\undefined%
      \relax%
    \else%
      \setlength{\unitlength}{\unitlength * \real{\svgscale}}%
    \fi%
  \else%
    \setlength{\unitlength}{\svgwidth}%
  \fi%
  \global\let\svgwidth\undefined%
  \global\let\svgscale\undefined%
  \makeatother%
  \begin{picture}(1,0.3133619)%
    \lineheight{1}%
    \setlength\tabcolsep{0pt}%
    \put(0,0){\includegraphics[width=\unitlength,page=1]{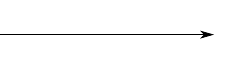}}%
    \put(0.341132,0.23042404){\makebox(0,0)[lt]{\lineheight{1.25}\smash{\begin{tabular}[t]{l}$r_i$\end{tabular}}}}%
    \put(0.17738868,0.02574484){\makebox(0,0)[lt]{\lineheight{1.25}\smash{\begin{tabular}[t]{l}$r_j = a r_i$\end{tabular}}}}%
    \put(0.95516958,0.16219765){\makebox(0,0)[lt]{\lineheight{1.25}\smash{\begin{tabular}[t]{l}$a_i$\end{tabular}}}}%
  \end{picture}%
\endgroup%

  \caption{The correspondence between region variables and \(a\)-variables.}
  \label{fig:region-variable-rule}
\end{marginfigure}

\begin{lemma}
  \label{lemma:shapings-give-region-vars}
  Any \(\chi\)-coloring of \(D\) gives well-defined region variables.
\end{lemma}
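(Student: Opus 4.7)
The plan rests on a single observation: at every crossing, whether positive or negative, the product $a_1 a_2$ of the incoming $a$-variables equals the product $a_{1'} a_{2'}$ of the outgoing ones. At a positive crossing this is immediate from \eqref{eq:a-transf-positive}, since $a_{1'} = a_1 A^{-1}$ and $a_{2'} = a_2 A$ give $a_{1'} a_{2'} = a_1 a_2$; the same argument applies at a negative crossing using $\tilde A$ in \eqref{eq:a-transf-negative}. Notice that this identity holds regardless of whether the crossing is pinched or degenerate.

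Given this identity, I would define the region variables as follows. Fix one region $R_0$ of the diagram, assign it an arbitrary value $r_0 \in \CC^\times$, and for any other region $R$ choose a path $\gamma$ from (the interior of) $R_0$ to $R$ meeting the diagram transversely in the interiors of segments (and avoiding all crossings). Define $r_R$ by transporting $r_0$ along $\gamma$ using the rule in \cref{fig:region-variable-rule}. To prove this construction is well-defined, it suffices to show that for every closed loop $\gamma$ in $S^2$ avoiding the crossings of $D$, the total transport factor along $\gamma$ is $1$.

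By planarity, any such loop lies in the open surface $S^2$ minus the finite crossing set, whose fundamental group is generated by small loops encircling single crossings (subject only to the global relation that the product of all such loops is trivial in $S^2$). It therefore suffices to check the claim for one small loop around a single crossing. At a positive crossing, tracking signs carefully using \cref{fig:crossing-regions,fig:region-variable-rule}, a counterclockwise loop $N \to W \to S \to E \to N$ crosses segment $1$ from left-to-right (factor $a_1$), segment $2$ from left-to-right ($a_2$), segment $1'$ from right-to-left ($a_{1'}^{-1}$), and segment $2'$ from right-to-left ($a_{2'}^{-1}$). The total transport factor is $a_1 a_2 / (a_{1'} a_{2'})$, which is $1$ by the observation above. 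The negative-crossing case is handled identically.

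The only real obstacle is the bookkeeping of orientations: one must carefully match the strand orientations in \cref{fig:crossing-regions} against the left-to-right convention of \cref{fig:region-variable-rule} to verify that the exponents at each of the four segments pair up as $(+1,+1,-1,-1)$ rather than some other combination that would not reduce to the identity $a_1 a_2 = a_{1'} a_{2'}$. Once this is settled, the proof is essentially a one-line consequence of \eqref{eq:a-transf-positive} and \eqref{eq:a-transf-negative}.
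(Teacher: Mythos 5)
Your proposal is correct and follows essentially the same route as the paper: both arguments reduce well-definedness to a local check at each crossing, where the key identity \(a_1 a_2 = a_{1'}a_{2'}\) follows immediately from \(a_{1'} = a_1 A^{\mp 1}\) and \(a_{2'} = a_2 A^{\pm 1}\) in \eqref{eq:a-transf-positive} and \eqref{eq:a-transf-negative}. The paper phrases the local check as the two ways of computing \(r_S\) from \(r_N\) around a crossing rather than as monodromy of a loop in the punctured sphere, but the content is identical.
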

\begin{proof}
  The rules of \cref{def:region-variables} determine the \(r_i\) up to an overall constant from any \(\chi\)-coloring: we can walk from the unbounded region of \(D\) to any other region, picking up factors of \(a_i\) in the process.
  We need to make sure this assignment is well-defined.

  It's enough to check it's well-defined near each crossing.
  Given a choice of \(r_N\) at a crossing (labeled as in \cref{fig:crossing-regions}), we have both
  \[
    r_S = a_1 a_2 r_N
    \text{ and } 
    r_S = a_{1'} a_{2'} r_N.
  \]
  However, these give the same value for \(r_S\) because by \eqref{eq:a-transf-positive} and \eqref{eq:a-transf-negative} we have \(a_1 a_2 = a_{1'} a_{2'}\) at any crossing.
\end{proof}

We want to work the other way and use the region variables (and meridian eigenvalues) to determine a \(\chi\)-coloring.
Suppose \(D\) has  \(c\) components and we have chosen meridian eigenvalues \(m_1, \dots, m_c\) and region variables \(r_0, \dots, r_{f}\).
\cref{lemma:a-gluing-relations} gives the ratios of the \(b\)-variables near any crossing; given an arbitrary choice of one \(b_i\), this determines the rest of them and should give a \(\chi\)-coloring.
However, there is a consistency condition: for these ratios to come from an assignment of \(b\)-variables to each segment bounding a region, the product of the ratios (one from each corner) must be \(1\).

\begin{definition}
  Let \(D\) be a diagram with region variables \(\{r_i\}\) and meridian eigenvalues \(\{m_i\}\).
  At non-degenerate positive crossings, the \defemph{corner terms} are
  \begin{equation}
    \begin{aligned}
      k_N
      &=
      \frac{r_W r_E - r_N r_S}{(r_W - m_1 r_N)(r_E - r_N/m_2)}
      \\
      k_W
      &=
      \frac{(r_N - r_W/m_1)(r_S - r_W/m_2)}{r_N r_S - r_W r_E}
      \\
      k_S
      &=
      \frac{r_W r_E - r_N r_S}{(r_E - r_S/m_1)(r_W - m_2 r_S)}
      \\
      k_E
      &=
      \frac{(r_S - m_1 r_E)(r_N - m_2 r_E)}{r_N r_S - r_W r_E}
    \end{aligned}
  \end{equation}
  and at non-degenerate negative crossings they are
  \begin{equation}
    \begin{aligned}
      k_N
      &=
      \frac{(r_W - r_N/m_1)(r_E - m_2 r_N)}{r_W r_E - r_N r_S}
      \\
      k_W
      &=
      \frac{r_N r_S - r_W r_E}{(r_N - m_1 r_W)(r_S - m_2 r_W)}
      \\
      k_S
      &=
      \frac{(r_E - m_1 r_S)(r_W - r_S/m_2)}{r_W r_E - r_N r_S}
      \\
      k_E
      &=
      \frac{r_N r_S - r_W r_E}{(r_S - r_E/m_1)(r_N - r_E/m_2)}
    \end{aligned}
  \end{equation}
  Here by \(r_N\) we mean the region variable north of the crossing (viewed left-to-right) as in \cref{fig:crossing-regions} and similarly for \(r_E, r_S, r_W\).
  The \defemph{region equation} associated to any region of \(D\) says that the product of all the corner terms near a region is \(1\).
  We call the set \(\avar D\) consisting of solutions \((r_0, \dots, r_{s-2}, m_1, \dots, m_2)\) to the region  equations the \defemph{\(a\)-shape variety} of \(D\).
  We require that solutions in \(D\) satisfy the non-degeneracy conditions, which in terms of the region variables are
  \begin{equation}
    r_W \ne  m_1 r_N
  \end{equation}
\end{definition}

\begin{marginfigure}
  \centering
  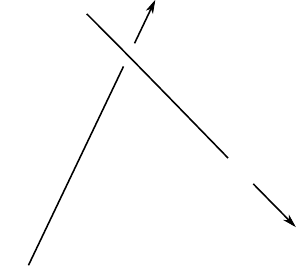
  \caption{A diagram region with three edges.}
  \label{fig:region-gluing-example-i}
\end{marginfigure}

\begin{example}
  In \cref{fig:region-gluing-example-i} the central region labeled \(7\) has three segments and three corners.
  The corner terms are
  \begin{align*}
    k_{13}
    &=
    \frac{r_1 r_3 - r_2 r_7}{(r_3 - r_7/m_2)(r_1 - m_1 r_7)}
    \\
    k_{35}
    &=
    \frac{r_3 r_5 - r_4 r_7}{(r_5 - r_7/m_3)(r_3 - m_2 r_7)}
    \\
    k_{51}
    &=
    \frac{r_1 r_5 - r_6 r_7}{(r_1 - r_7/m_1)(r_5 - m_3 r_7)}
  \end{align*}
  and the region equation is
  \[
    k_{13} k_{35} k_{51} = 1.
  \]
  The non-degeneracy relations at each crossing are
  \[
    r_1r_3 \ne r_2 r_5\text{, } r_3 r_5 \ne r_4r_7 \text{, and } r_1 r_5 \ne r_6 r_7 .
  \]
\end{example}

\begin{remark}
  Because the region equations are homogeneous in the region variables \(\set{r_i}\) they have an extra degree of freedom.
  We can remove this by fixing the variable for some region; an obvious choice is to fix the value for the unbounded region as \(r_0 = 1\).
\end{remark}

\begin{theorem}
  Write \(\svar{D}[\operatorname{nd}]\) for the space of non-degenerate \(\chi\)-colorings of \(D\).
  Then is a map \(\alpha : \avar D \to \svar{D}[\operatorname{nd}]\) with a right inverse.
\end{theorem}
\begin{proof}
  A point of \(\avar D\) is a choice of meridian eigenvalue \(m_i\) for each component and a choice of region variables, which uniquely determines the \(a\)-variables.
  The relations \eqref{eq:positive-b-relations} and \eqref{eq:negative-b-relations} determine the ratios between the \(b\)-variables of every segment, so if we pick the \(b\)-variable \(b_1\) of one segment arbitrarily we determine all of them.
  This defines the map \(\alpha\).

  To define a right inverse of \(\alpha\) let \(\chi\) be a non-degenerate \(\chi\)-coloring of \(D\).
  Choose the region variable of one region (say, the topmost one) to be  \(1\).
  Then by \cref{lemma:shapings-give-region-vars} the \(\chi\)-coloring gives well-defined region variables for the other regions, so we have defined a point \(A\) of \(\avar D\).
  It is clear that \(\alpha(A) = \chi\).
\end{proof}

\begin{example}
  Consider the diagram of the trefoil knot in \cref{fig:trefoil-regions} with labeled regions.
  For simplicity, assume that the meridian eigenvalue \(m\) is \(1\).
  Then the region equation for region \(2\) is
  \[
    \frac{(r_4 - r_2)(r_1 - r_2)}{r_1 r_4 - r_0 r_2} \frac{(r_1 -r_2)(r_4 - r_2)}{r_1 r_4 - r_2 r_3} = 1
  \]
  There are similar equations for the other regions.
  Two solutions to the region gluing equations are given by \cite[Example 4.11]{Kim2016}
  \begin{gather}
    \left(r_0, r_1, r_2, r_3, r_4\right)
    =
    \left(
      1
      ,
      \frac{(q - p)}{1+q - p}
      ,
      \frac{q - p + pq}{1+q - p}
      ,
      \frac{1 + 2q + pq}{1+q - p}
      ,
      \frac{1 + q + pq}{1 + q - p}
    \right)
    \intertext{and}
    \left(r_0, r_1, r_2, r_3, r_4\right)
   =
    \left(
      1,
      p,
      1,
      1,
      2-p
    \right)
  \end{gather}
  Here we can choose \(p,q\) arbitrarily as long as the non-degeneracy conditions are satisfied.
  The first family of solutions has nonabelian holonomy, while the second is abelian. 
  The abelian family does not correspond to a point of \(\bvar D\) because \(\chi\)-colorings with abelian holonomy are necessarily pinched.
\end{example}

\begin{marginfigure}
  \centering
\begingroup%
  \makeatletter%
  \providecommand\color[2][]{%
    \errmessage{(Inkscape) Color is used for the text in Inkscape, but the package 'color.sty' is not loaded}%
    \renewcommand\color[2][]{}%
  }%
  \providecommand\transparent[1]{%
    \errmessage{(Inkscape) Transparency is used (non-zero) for the text in Inkscape, but the package 'transparent.sty' is not loaded}%
    \renewcommand\transparent[1]{}%
  }%
  \providecommand\rotatebox[2]{#2}%
  \newcommand*\fsize{\dimexpr\f@size pt\relax}%
  \newcommand*\lineheight[1]{\fontsize{\fsize}{#1\fsize}\selectfont}%
  \ifx\svgwidth\undefined%
    \setlength{\unitlength}{145.25805473bp}%
    \ifx\svgscale\undefined%
      \relax%
    \else%
      \setlength{\unitlength}{\unitlength * \real{\svgscale}}%
    \fi%
  \else%
    \setlength{\unitlength}{\svgwidth}%
  \fi%
  \global\let\svgwidth\undefined%
  \global\let\svgscale\undefined%
  \makeatother%
  \begin{picture}(1,0.79602985)%
    \lineheight{1}%
    \setlength\tabcolsep{0pt}%
    \put(0,0){\includegraphics[width=\unitlength,page=1]{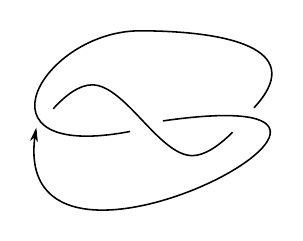}}%
    \put(0.34147504,0.74364961){\color[rgb]{0,0,0}\makebox(0,0)[lt]{\lineheight{1.25}\smash{\begin{tabular}[t]{l}$0$\end{tabular}}}}%
    \put(0.48857561,0.54105698){\color[rgb]{0,0,0}\makebox(0,0)[lt]{\lineheight{1.25}\smash{\begin{tabular}[t]{l}$1$\end{tabular}}}}%
    \put(0.27635904,0.39386475){\color[rgb]{0,0,0}\makebox(0,0)[lt]{\lineheight{1.25}\smash{\begin{tabular}[t]{l}$2$\end{tabular}}}}%
    \put(0.61717874,0.32891186){\color[rgb]{0,0,0}\makebox(0,0)[lt]{\lineheight{1.25}\smash{\begin{tabular}[t]{l}$3$\end{tabular}}}}%
    \put(0.33866738,0.2043193){\color[rgb]{0,0,0}\makebox(0,0)[lt]{\lineheight{1.25}\smash{\begin{tabular}[t]{l}$4$\end{tabular}}}}%
  \end{picture}%
\endgroup%

  \caption{A diagram of the trefoil knot with labeled regions.}
  \label{fig:trefoil-regions}
\end{marginfigure}

\subsection{The segment equations of a twist region}
\label{sec:twist-region}
\begin{figure}
  \centering
  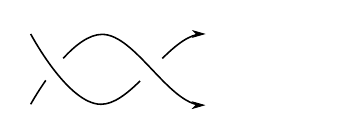
  \caption{
    A parallel twist region with $N$ positive crossings.
    Here we have labeled the \(b\)-variables of the top and bottom segments as \( x_i\) and \( y_i\).
  }
  \label{fig:twist-region}
\end{figure}
A \defemph{twist region} of a knot diagram with \(N\) positive twists is shown in \cref{fig:twist-region}.
We call it a \defemph{parallel} twist region because both strands are oriented in the same direction.
\textcite[Section 6]{Kim2016} showed how to solve the segment equations in a twist region in when the holonomy representation is boundary-parabolic (that is, when \(m = \pm 1\)).
By doing so, they can compute \(\slg\) representations on some infinite families of knots like \((2,N)\) torus knots and twist knots.
In this section we translate their computation to our conventions and explain how to extend it to all irreducible representations of \((2,N)\) torus knots.

\begin{definition}
  A sequence $\{F_i\}_{i \in \ZZ}$ is  \defemph{$W$-Fibonacci} for $W \in \CC$ if it satisfies
  \[
    F_{i+1} = W \cdot F_i + F_{i-1}.
  \]
  The sequence $\{B_i\}$ with $B_0 = 0$ and $B_1 = 1$ is called the \defemph{base} $W$-Fibonacci sequence.
\end{definition}

\begin{lemma}[\protect{ \cite[Lemma 6.2]{Kim2016} }]
  \label{lemma:fibonacci}
  Let \(\{F_i\}\) and \(\{G_i\}\) be \(W\)-Fibonacci sequences and \(\{B_i\}\) the base \(W\)-Fibonacci sequence.
  Then for all \(i\),
  \begin{enumerate}[(a)]
    \item \(F_i = F_0 B_{i-1} + F_1 B_i\),
    \item \(B_{i}^{2} - B_{i-1} B_{i+1} = (-1)^{i+1}\),
    \item and if \(i \ge 0\)
      \[
        B_{i+1} = \sum_{0 \le j \le i/2} \binom{i-j}{j} W^{i-2j}.\qedhere
      \]
  \end{enumerate}
\end{lemma}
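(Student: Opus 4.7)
The plan is to prove all three statements by induction, using the defining recurrence $F_{i+1} = W F_i + F_{i-1}$ (and the analogous one for $B_i$) in both directions, since the indexing runs over all of $\mathbb{Z}$.

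For part (a), I would first observe that both sides of $F_i = F_0 B_{i-1} + F_1 B_i$ agree at $i=0$ (since $B_{-1} = 1$ and $B_0 = 0$, which follow from $B_1 = W B_0 + B_{-1}$) and at $i=1$ (trivially). Since both sides satisfy the same $W$-Fibonacci recurrence in $i$, the equality propagates to all $i \in \mathbb{Z}$ by forward and backward induction.

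For part (b), this is Cassini's identity. I would check the base case $i=0$: $B_0^2 - B_{-1} B_1 = 0 - 1 \cdot 1 = -1 = (-1)^{1}$. For the inductive step, write
\[
  B_{i+1}^2 - B_i B_{i+2} = B_{i+1}^2 - B_i(W B_{i+1} + B_i) = B_{i+1}(B_{i+1} - W B_i) - B_i^2 = B_{i+1} B_{i-1} - B_i^2,
\]
which equals $-(B_i^2 - B_{i-1} B_{i+1}) = -(-1)^{i+1} = (-1)^{i+2}$ as required. The same manipulation runs backward for negative $i$.

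For part (c), I would induct on $i \ge 0$. The base cases $i=0$ (giving $B_1 = 1$, matching the empty-ish sum with $j=0$ term $\binom{0}{0}W^0 = 1$) and $i=1$ (giving $B_2 = W$) are direct. For the inductive step, applying the recurrence $B_{i+2} = W B_{i+1} + B_i$ and the two inductive hypotheses yields
\[
  B_{i+2} = \sum_{0 \le j \le i/2} \binom{i-j}{j} W^{i-2j+1} + \sum_{0 \le j \le (i-1)/2} \binom{i-1-j}{j} W^{i-1-2j}.
\]
Reindexing the second sum by $j \mapsto j-1$ and combining with Pascal's identity $\binom{i-j}{j} + \binom{i-j}{j-1} = \binom{i+1-j}{j}$ gives the desired formula with $i$ replaced by $i+1$. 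The main bookkeeping obstacle is just handling the ranges of summation carefully at the endpoints, which can be done by allowing $\binom{n}{k} = 0$ whenever $k < 0$ or $k > n$.
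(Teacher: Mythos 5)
Your proof is correct. Note that the paper itself does not prove this lemma at all --- its ``proof'' is a one-line citation to \cite[Lemma 6.2]{Kim2016} --- so you have supplied a genuine argument where the paper defers to a reference. Your three inductions are all sound: in (a) you correctly deduce \(B_{-1}=1\) from the recurrence and use that both sides satisfy the same linear recurrence, so agreement at two consecutive indices propagates in both directions over \(\ZZ\); in (b) the manipulation \(B_{i+1}^2 - B_iB_{i+2} = B_{i+1}(B_{i+1}-WB_i) - B_i^2 = B_{i-1}B_{i+1} - B_i^2\) is exactly the standard Cassini step and the sign bookkeeping checks out; and in (c) the reindexing plus Pascal's identity \(\binom{i-j}{j}+\binom{i-j}{j-1}=\binom{i+1-j}{j}\) is the right combinatorial move, with the endpoint terms handled correctly by the convention that out-of-range binomial coefficients vanish. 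The only thing your write-up buys beyond the paper is self-containedness; the only thing the citation buys is brevity.
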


\begin{lemma}
  \label{lemma:twist-rels}
  Consider a parallel twist region with $N$ positive twists labeled as in \cref{fig:twist-region} in which both strands have meridian eigenvalue \(m\).
  If the equations
  \begin{equation}
    \label{eq:twist-rels-positive}
    x_i = \frac{F_i}{G_i} \quad 
    y_i = \frac{F_{i-1}}{m G_{i+1}}
  \end{equation}
  hold for $i=1,2$ (i.e.\@ for segments $1, 2, 3, 4$) then they hold for all $1 \le i \le n+1$.
  In a region with $n$ negative twists, the same holds with \eqref{eq:twist-rels-positive} replaced with
  \begin{equation}
    \label{eq:twist-rels-negative}
    x_{i } = \frac{F_{i-1}}{G_{i+1}} \quad 
    y_{i} = \frac{F_{i}}{m G_{i}} \qedhere
  \end{equation}
\end{lemma}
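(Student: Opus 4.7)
The plan is to proceed by induction on \(i\). The base case \(i = 1, 2\) is given by hypothesis; for the inductive step I assume \eqref{eq:twist-rels-positive} at indices \(i\) and \(i+1\) and derive it at \(i+2\). Throughout the twist region both strands share the meridian eigenvalue \(m\), so the braiding formulas \eqref{eq:b-transf-positive} simplify substantially.

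First I would use \cref{prop:b-gluing-relations} to eliminate the \(a\)-variables at the \((i+1)\)th crossing in favor of the four adjacent \(b\)-values \(x_i, x_{i+1}, y_i, y_{i+1}\). The segment equations along the two strands passing through the twist region (equating the two expressions for each \(a\)-variable that arise from the crossings on its two ends) then yield explicit rational formulas expressing \((x_{i+2}, y_{i+2})\) in terms of \((x_i, x_{i+1}, y_i, y_{i+1}, m)\). This converts the local biquandle data into a genuine recurrence on the \(b\)-variables of the twist region.

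Next I would substitute the Fibonacci ansatz \(x_j = F_j/G_j\) and \(y_j = F_{j-1}/(mG_{j+1})\) for \(j = i, i+1\) into this recurrence. The parameter \(W\) is pinned down by the initial data: matching the recurrence at the first two crossings forces a unique value \(W = W(m, x_1, x_2, y_1, y_2)\) for which \(\{F_j\}\) and \(\{G_j\}\) can be extended as a common \(W\)-Fibonacci pair. Once this \(W\) is fixed, I would verify that \(x_{i+2}\) and \(y_{i+2}\) computed from the rational recurrence agree with \(F_{i+2}/G_{i+2}\) and \(F_{i+1}/(mG_{i+3})\), where \(F_{i+2} = WF_{i+1} + F_i\) and \(G_{i+2} = WG_{i+1} + G_i\).

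The main obstacle is the algebraic bookkeeping in this final matching: after clearing denominators, the expressions involve cross-products like \(F_{i+1}G_i - F_i G_{i+1}\) and \(G_{i+1}^2 - G_iG_{i+2}\). These are exactly the terms controlled by part (b) of \cref{lemma:fibonacci}, which collapses them to pure sign factors \((-1)^{j+1}\) times a constant depending only on the initial data. Applying this identity is what makes the recurrence close, and it is the reason why a single parameter \(W\) suffices to govern the entire twist region. The case of \(N\) negative twists is handled in parallel: one uses \eqref{eq:b-transf-negative} and \eqref{eq:negative-a-relations} in place of their positive counterparts, and the same Fibonacci identity drives the induction to yield \eqref{eq:twist-rels-negative}.
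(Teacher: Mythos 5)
Your proposal follows essentially the same route as the paper: use \cref{prop:b-gluing-relations} to turn the segment equations of the twist region into an explicit rational recurrence on the \(b\)-variables (the paper gets \(x_{i+1} = m y_i - m x_i y_i / x_{i-1} + x_i\) and a companion relation for \(1/y_{i+1}\)), substitute the Fibonacci ansatz, and close the induction. The one substantive correction: the induction does \emph{not} hinge on part (b) of \cref{lemma:fibonacci}. After clearing denominators the only combinations that appear are differences like \(G_{i+1} - G_{i-1}\) and \(F_i - F_{i-2}\), which collapse to \(W G_i\) and \(W F_{i-1}\) directly from the defining recurrence \(A_{i+1} = W A_i + A_{i-1}\); no Cassini-type cross-product such as \(F_{i+1}G_i - F_iG_{i+1}\) arises. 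Note also that part (b) as stated applies only to the base sequence \(B_i\), not to a general pair \(\{F_i\},\{G_i\}\), so it could not be invoked in the form you describe; it is instead used later, in the longitude computation for the \((2,N)\)-torus knots. Finally, the determination of \(W\) from the initial data belongs to \cref{prop:fib-ansatz-parallel} rather than to this lemma, whose hypothesis simply assumes the ansatz at \(i=1,2\) for given \(W\)-Fibonacci sequences.
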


\begin{proof}
  Consider the gluing equation for the segment labeled \(x_i\).
  Because we are using \(b\)-variables, we think of the four \(b\)-variables associated to the segments at each crossing as determining the \(a\)-variables via \eqref{eq:positive-a-relations}.
  The gluing equation of a segment is then checking that the \(a\)-variables for each side agree.
  In this case, it is
  \[
    \frac{x_{i-1}}{m y_{i}} \frac{m x_{i} - y_{i}}{x_{i} - x_{i-1}}
    =
    \frac{ y_i - m  x_i}{ x_{i+1}- x_i}
  \]
  Solving for \(x_{i+1}\) and repeating this argument for the segment \(y_{i}\) gives the recurrence relations
  \begin{align*}
     x_{i+1} &= m y_i -  \frac{m x_i  y_i}{ x_{i-1}} + x_i 
    \\
     \frac{1}{y_{i+1}} &= \frac{m}{x_i} - \frac{m y_{i-1}}{x_{i} y_{i}} + \frac{1}{y_i}
  \end{align*}
  (These are the relations of \cite[Lemma 6.3]{Kim2016} up to some factors of \(m\).)
  The result follows by induction, and the negative case works similarly.

  To give some details, substitute \eqref{eq:twist-rels-positive} into the recurrence for \(x_{i+1}\) to obtain
  \begin{align*}
    \frac{F_{i+1}}{G_{i+1}} &= \frac{F_{i-1}}{G_{i+1}} - \frac{F_{i} G_{i-1}}{G_{i} G_{i+1}} + \frac{F_{i}}{G_{i}}
    \intertext{equivalently}
    F_{i+1} &= G_{i}^{-1} \left[ G_{i} F_{i-1} + F_{i} (G_{i+1} -G_{i-1} ) \right]
  \end{align*}
  After applying the recurrence for \(G_{i+1}\) the right-hand side becomes
  \begin{align*}
    G_{i}^{-1} \left[ G_{i} F_{i-1} + F_{i} (W G_{i} + G_{i-1} -G_{i-1} ) \right]
    &=
    F_{i-1} +  W F_{i} \\
    &=
    F_{i+1}
  \end{align*}
  as required.
  Something similar works for the recurrence for \(y_i\).
\end{proof}

\begin{proposition}
  \label{prop:fib-ansatz-parallel}
  In a parallel twist region with positive twists in which both strands have meridian eigenvalue \(m\),
  the adjusted segment variables are given in terms of \( x_1,  x_2,  y_1,  y_2\) by
  \begin{align}
    \label{eq:parallel-twist-solution-x}
     x_i &=
    \frac{ m x_1  y_1 W \fib{i-1}(W) +  x_1 ( x_2 -  m y_1) \fib{i}(W) }{( x_2 -  m y_1) \fib{i-2}(W) +  x_1 W \fib{i-1}(W) }
    \\
    \label{eq:parallel-twist-solution-y}
     y_i &=
    m^{-1} \frac{ m x_1  y_1 W \fib{i-2}(W) +  x_1 ( x_2 -  m y_1) \fib{i-1}(W) }{( x_2 -  m y_1) \fib{i-1}(W) +  x_1 W \fib{i}(W) }
  \end{align}
  where
  \[
    W^2 = (m y_1 -  x_2) \left( \frac{1}{ x_1} - \frac{1}{m y_2} \right),
  \]
  \(\fib{i}\) is the polynomial \(\fib{i}(W) = B_i\), and \(B_i\) is the base \(W\)-Fibonnaci sequence determined by
  \[
    B_{i+1} = W B_i + B_{i-1}, B_1 = 1, B_0 = 0.\qedhere
  \]
\end{proposition}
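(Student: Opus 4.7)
The plan is to apply \Cref{lemma:twist-rels}, so the task reduces to producing $W$-Fibonacci sequences $\{F_i\}$ and $\{G_i\}$ whose quotients match \eqref{eq:twist-rels-positive} with the correct initial conditions at $i = 1,2$, and to identify $W$ from the data $(x_1, x_2, y_1, y_2)$. By \Cref{lemma:fibonacci}(a), any $W$-Fibonacci sequence is determined by two initial values via $F_i = F_0 B_{i-1} + F_1 B_i$, where $\{B_i\}$ is the base $W$-Fibonacci sequence; note $B_{-1} = 1$ follows from the recurrence.  Thus both $F_i$ and $G_i$ are affine combinations of $B_{i-1}$ and $B_i$, which leads directly to an ansatz of the form
\begin{equation*}
  x_i = \frac{\alpha B_{i-1} + \beta B_i}{\gamma B_{i-1} + \delta B_i}, \qquad
  y_i = \frac{1}{m}\,\frac{\alpha B_{i-2} + \beta B_{i-1}}{\gamma B_{i} + \delta B_{i+1}},
\end{equation*}
for constants $\alpha, \beta, \gamma, \delta$ and a parameter $W$, defined up to the projective rescaling $(\alpha, \beta, \gamma, \delta) \mapsto \lambda(\alpha, \beta, \gamma, \delta)$.

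First I would use the initial values $B_{-1} = B_1 = 1$, $B_0 = 0$, $B_2 = W$ to impose the conditions at $i = 1$: this gives $x_1 = \beta/\delta$ and $y_1 = \alpha/(m\delta W + m\gamma)$. Fixing the scaling by setting $\delta = 1$ determines $\beta = x_1$; the $y_1$ relation then expresses $\alpha$ linearly in the remaining unknowns. Repeating at $i = 2$ with $B_1 = 1, B_2 = W$ gives two more equations which, after a brief linear manipulation, produce the explicit values of $\alpha, \beta, \gamma, \delta$ appearing as coefficients in \eqref{eq:parallel-twist-solution-x} and \eqref{eq:parallel-twist-solution-y}. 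Substituting these back into the ansatz yields the stated closed forms.

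Finally I would pin down $W$. Three of the four equations (namely those for $x_1, x_2, y_1$) were used to solve for the four ratios $\alpha : \beta : \gamma : \delta$, but the equation for $y_2$ was not; it becomes a nontrivial consistency condition that $W$ must satisfy. A direct substitution of the ansatz at $i = 2$ into the expression for $y_2$ simplifies (using $B_0 = 0$, $B_1 = 1$, $B_2 = W$) to
\begin{equation*}
  m y_2 x_1 W^2 = (x_2 - m y_1)(x_1 - m y_2),
\end{equation*}
which rearranges into the claimed formula $W^2 = (m y_1 - x_2)(1/x_1 - 1/(m y_2))$. The main obstacle is purely computational: carrying out the linear algebra and the final algebraic rearrangement without losing track of signs, and confirming that the resulting $F_i, G_i$ are genuinely $W$-Fibonacci, at which point \Cref{lemma:twist-rels} propagates the identity to all $1 \le i \le N+1$.
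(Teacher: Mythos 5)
Your proposal is correct and follows essentially the same route as the paper: both reduce the claim to the cases $i=1,2$ via \cref{lemma:twist-rels} and then use part (a) of \cref{lemma:fibonacci} to express the $W$-Fibonacci sequences $F_i, G_i$ in terms of the base sequence $B_i$. The only difference is presentational — the paper states the initial conditions $F_0, F_1, G_1, G_2$ and says "check," whereas you derive them (and the formula for $W^2$ from the $y_2$ consistency condition) explicitly; your computation $m y_2 x_1 W^2 = (x_2 - m y_1)(x_1 - m y_2)$ is exactly the check the paper leaves implicit.
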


\begin{remark}
  \label{rem:sqrt-W}
  The \(\fib i\) are sometimes called the \defemph{Fibonacci polynomials}.
  When \(i\) is odd \(\fib i(W)\) is a polynomial in \(W^2\), and when \(i\) is even \(\fib i(W)\) is \(W\) times a polynomial in \(W^2\).
  In particular, the solutions in (\ref{eq:parallel-twist-solution-x}--\ref{eq:parallel-twist-solution-y}) depend only on \(W^2\), not \(W\).
\end{remark}

\begin{proof}
  We need to pick the right initial conditions for \(F_i\) and \(G_i\).
  A convenient way to do this is to set
  \begin{align*}
    F_0 &=  m  W x_1  y_1
        &
    F_1 &=  x_1( x_2 -  m y_1)
    \\
    G_1 &=  x_2 -  m y_1
        &
    G_2 &=  W x_1 .
  \end{align*}
  Then we can check that
  \[
    x_i = \frac{F_i}{G_i} \text{ and } y_i = \frac{F_{i-1}}{m G_{i+1}}
  \]
  holds for \(i = 1,2\), so by \cref{lemma:twist-rels} they hold for all \(i\).
  We can now apply (a) of \cref{lemma:fibonacci}
\end{proof}

\begin{remark}
  \label{rem:twist-different-eigvals}
  To compute more examples, we would need to extend this computation to:
  \begin{enumerate}
    \item parallel twist regions where the meridian eigenvalues \(m_1, m_2\) on the two strands differ, and
    \item to antiparallel twist regions in the boundary non-parabolic (\(m \ne \pm 1\)) case.
  \end{enumerate}
  In fact, the segment equations in these two cases are closely related, because there is a simple formula \cite[Definition 4.2]{McPhailSnyderThesis} for reversing the orientation of a strand.
  Following \cite[Section 7]{Ham2018}, the right generalization is to consider sequences of the form
  \[
    A_{i+1} = W A_i + \frac{m_i}{m_{i+1}} A_{i-1}
  \]
  where the index on \(m_i\) is understood mod  \(2\).
\end{remark}

We can at use \cref{prop:fib-ansatz-parallel} to compute an infinite family of boundary non-parabolic examples.
A \defemph{\((2,N)\)-torus knot} is obtained by attaching segments \(x_1\) and \(x_{N+1}\) and segments \(y_1\) and \(y_{N+1}\) in \cref{fig:twist-region}.
We assume \(N = 2n+1\) is odd, so that we obtain a knot and not a link.
We can now solve for its segment variables.
We think of of \(x_1\), \(x_2\), and \(y_1\) as parameters and use the ansatz
\[
  x_i = \frac{F_i}{G_i} \text{ and } y_i = \frac{F_{i-1}}{m G_{i+1}}
\]
of \cref{prop:fib-ansatz-parallel}.
We need to choose \(\Lambda\) (equivalently, choose \(y_2\)) so that the gluing equations of the edges \(x_1 = x_{N+1}\) and \(y_1 = y_{N+1}\) are satisfied.
The former is
  \[
    \frac{x_{N}}{m y_{1}} \frac{m x_{1} - y_{1}}{x_{1} - x_{N}}
    =
    \frac{ y_{1} - m  x_{1}}{ x_{2}- x_{1}}
  \]
or
\[
  1 = \frac{x_N}{m y_{1}} \frac{x_1 - x_2}{x_1 - x_{N}}
\]
which is equivalent to 
\begin{equation}
  \label{eq:torus-poly-deriv}
  \frac{G_{N}}{F_{N}} = \frac{G_{0}}{F_{0}}.
\end{equation}
Now, the sequence \(H_i = F_0 G_i - F_i G_0\) is a \(\sqrt\Lambda\)-Fibonacci sequence satisfying \(H_0 = 0\) and \(H_1 \ne 0\), and \eqref{eq:torus-poly-deriv} holds if and only if \(H_N = 0\).
By part (a) of \cref{lemma:fibonacci} we have \(H_i = H_1 B_i\) for all \(i\), and we conclude that \eqref{eq:torus-poly-deriv} holds if and only if \(B_N = 0\).
Using the fact that \(N = 2n+1\) is odd, the condition is
\begin{align*}
  B_{2n+1}
  &= \sum_{0 \le j \le (2n+1)/2} \binom{2n-j}{j} \sqrt{\Lambda}^{2n-2j}
  \\
  &=
  \sum_{0 \le j \le n} \binom{2n - j}{j} {\Lambda}^{n - j}
\end{align*}
It turns out that this also implies the gluing relation for \(y_{1} = y_{N+1}\).
We have shown:

\begin{theorem}
  \label{thm:torus-knot-solutions}
  Taking the braid closure of \cref{fig:twist-region} for \(N = 2n+1\) gives a diagram \(D\) of a \((2, 2n+1)\)-torus knot.
  For any meridian eigenvalue \(m\) the \(b\)-variables of a \(\chi\)-coloring of \(D\) are given by
  \begin{align}
    \label{eq:torus-knot-solution-x}
     x_i &=
    \frac{ q \sqrt{\Lambda} \fib{i-1}(\sqrt{\Lambda}) +  pr \fib{i}(\sqrt{\Lambda}) }{r \fib{i-2}(\sqrt{\Lambda}) +  \sqrt{\Lambda} \fib{i-1}(\sqrt{\Lambda}) }
    \\
    \label{eq:torus-knot-solution-y}
     y_i &= \frac{1}{m}
    \frac{ q \sqrt{\Lambda} \fib{i-2}(\sqrt{\Lambda}) +  pr \fib{i-1}(\sqrt{\Lambda}) }{r \fib{i-1}(\sqrt{\Lambda}) +  \sqrt{\Lambda} \fib{i}(\sqrt{\Lambda}) }
  \end{align}
  where \(\Lambda\) satisfies
  \begin{equation}
    \label{eq:trefoil-riley}
    \sum_{0 \le j \le n} \binom{2n - j}{j} {\Lambda}^{n - j}  = 0
  \end{equation}
  and
  \[
    p =  x_1, \quad q = m  y_1 ,\quad r = ( x_2 - m y_1)/ x_1
  \]
  are arbitrary nonzero parameters chosen so that the first crossing is not pinched.\note{\(p,q,r\) are closely related to but not exactly the variables \(p,q,r\) in \cite[Theorem 6.5]{Kim2016}.}
  As discussed in \cref{rem:sqrt-W} the expressions for \( x_i\) and \( y_i\) depend only on \(\Lambda\), not \(\sqrt \Lambda\).
\end{theorem}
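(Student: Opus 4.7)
The plan is to treat the $(2,2n+1)$-torus knot diagram as a twist region of $N=2n+1$ positive crossings closed up by identifying segment $1$ with segment $N+1$ (both top and bottom). By \Cref{prop:fib-ansatz-parallel} the internal segment equations of the twist region are already solved: once we fix the shapes of the first two segments (equivalently the parameters $x_1,x_2,y_1$, with $y_2$ determined by $W^2 = \Lambda$), every other $x_i,y_i$ is given by the Fibonacci ansatz $x_i = F_i/G_i$, $y_i = F_{i-1}/mG_{i+1}$ for the sequences $F_i,G_i$ used in the proof of \Cref{prop:fib-ansatz-parallel}. The only remaining segment equations are the two closure relations $x_1 = x_{N+1}$ and $y_1 = y_{N+1}$; the plan is to show that both reduce to $B_N(\sqrt\Lambda) = 0$, which then unpacks into \eqref{eq:trefoil-riley}.

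First I would expand the closure segment equation at segment $1$ (the one derived in the proof of \Cref{lemma:twist-rels}) and clear denominators to get
\[
  \frac{G_{N}}{F_{N}} = \frac{G_{0}}{F_{0}},
\]
or equivalently $H_N := F_0 G_N - F_N G_0 = 0$. Because $\{H_i\}$ is a $\sqrt\Lambda$-Fibonacci sequence and is identically zero at $i=0$ with $H_1 \ne 0$ (using that the crossing is non-pinched, so $F_0,G_0$ are nonzero and linearly independent from $F_1,G_1$), part (a) of \Cref{lemma:fibonacci} gives $H_i = H_1 B_i$, so the condition collapses to $B_N = 0$. Then part (c) of \Cref{lemma:fibonacci} rewrites $B_{2n+1}$ as the polynomial in $\Lambda$ appearing in \eqref{eq:trefoil-riley}, using that odd-indexed $B_i$ are polynomials in $\Lambda = (\sqrt\Lambda)^2$ (cf.\ \Cref{rem:sqrt-W}).

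Next I would substitute the chosen initial conditions $F_0 = mWx_1y_1$, $F_1 = x_1(x_2-my_1)$, $G_1 = x_2-my_1$, $G_2 = Wx_1$ into the formulas $x_i = F_i/G_i$ and $y_i = F_{i-1}/mG_{i+1}$, applying \Cref{lemma:fibonacci}(a) to obtain the closed forms \eqref{eq:torus-knot-solution-x}--\eqref{eq:torus-knot-solution-y}. The reparametrization $p = x_1$, $q = my_1$, $r = (x_2 - my_1)/x_1$ is then a direct substitution, and the non-pinched condition at the first crossing translates to $p,q,r$ nonzero and $r \ne 0$. The observation in \Cref{rem:sqrt-W} that the odd-indexed $\fib i$ are polynomials in $\Lambda$ while the even-indexed ones factor as $\sqrt\Lambda$ times a polynomial in $\Lambda$ ensures that the ratios in \eqref{eq:torus-knot-solution-x}--\eqref{eq:torus-knot-solution-y} depend only on $\Lambda$, as claimed.

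The main obstacle will be the second closure relation $y_1 = y_{N+1}$: a priori this is an independent equation, so I need to show that imposing $B_N=0$ already forces it. My plan is to run the identical argument with the $y$-recurrence, which produces a condition of the form $F_0 G_{N+2} - F_{N+1} G_1 = 0$ (or similar) that I then rewrite as a $\sqrt\Lambda$-Fibonacci combination of $B_N$ and $B_{N+1}$; a short manipulation using $B_i^2 - B_{i-1}B_{i+1} = (-1)^{i+1}$ from \Cref{lemma:fibonacci}(b) should reduce this to a scalar multiple of $B_N$, yielding the claim. This Fibonacci identity bookkeeping is the delicate part — everything else is substitution and induction.
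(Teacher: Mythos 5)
Your proposal follows essentially the same route as the paper: reduce the closure to the single condition $G_N/F_N = G_0/F_0$, pass to the $\sqrt\Lambda$-Fibonacci sequence $H_i = F_0 G_i - F_i G_0$ with $H_0 = 0$ so that \cref{lemma:fibonacci}(a) collapses the condition to $B_N = 0$, and then expand $B_{2n+1}$ via part (c) to get \eqref{eq:trefoil-riley}. The one point where you go further is the second closure relation $y_1 = y_{N+1}$, which the paper simply asserts is implied by $B_N = 0$; your plan to verify it with the identity $B_i^2 - B_{i-1}B_{i+1} = (-1)^{i+1}$ is the right kind of bookkeeping and fills in a step the paper leaves implicit.
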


\begin{remark}
  The polynomial in \eqref{eq:trefoil-riley} is sometimes called the \defemph{Riley polynomial} of the knot, and this specific case is discussed in \cite[Section 5]{Riley1972}.
  Riley studied these polynomials for two-bridge knots, but they can be defined more generally and are a useful computational tool \cite{Cho2022}.

  In the present case \eqref{eq:trefoil-riley} is a monic polynomial with \(n\) distinct roots.
  The roots of \eqref{eq:trefoil-riley} can be given explicitly \cite[Theorem 5]{Riley1972}.
  In particular, when \(n =1\) the only root is \(\Lambda = -1\).
  This gives us the solutions \eqref{eq:trefoil-soln} for the trefoil knot.
\end{remark}

\begin{remark}
  The geometry of these solutions is not affected by the choice of \(p,q,r\), but is instead determined by \(m\) and the choice of root \(\Lambda\) of the Riley polynomial.
  In our conventions this is less obvious, but there is a different presentation \cite[Section 5]{Kim2016} of the holonomy of a shaped diagram that is manifestly independent of the choice of \(p,q,r\).

  We can still give an informal explanation of this independence.
  In general \cite[Theorem 3.1]{Munoz2009} the character variety of a  \((p,q)\) torus knot has \((p-1)(q-1)/2\) components coming from irreducible representations.\note{There is also a component describing the representations with abelian image. It is not detected by the \(b\)-gluing equations because they only detect non-pinched solutions, which always correspond to a nonabelian holonomy representation.}
  In our family of examples \(p = 2\) and \(q = 2n +1\) so there are \(n\) components, each corresponding to a root \(\Lambda\) of \eqref{eq:trefoil-riley}.
  Each component arising in this way is \(1\)-dimensional \cite[Theorem 3.1]{Munoz2009} and parametrized by a rational function of \(m\).
  (The components are isomorphic to \(\CC\) via the trace \(\mu + \mu^{-1}\) of a generator of the knot group, but this generator is not a meridian, so \(\mu\) is some rational function of \(m\).)
\end{remark}

\begin{example}
  \label{ex:trefoil}
  Consider the diagram of the trefoil in \cref{fig:trefoil}.
  For any meridian eigenvalue \(m \ne 0\), we can freely choose the variables \(b_1, b_2, b_3\), as long as
  \[
    \frac{b_2}{m b_1}, \frac{b_3}{b_1} \ne 1
  \]
  so that the solution is not pinched.
  The remaining segment variables are given by
  \begin{equation}
    \label{eq:trefoil-soln}
    \left(b_4, b_5, b_6\right)=
    \left(
      \frac{b_1(m b_2 - b_3)}{m(b_1 + m b_2 - b_3)},
      \frac{m b_1 b_2}{b_1 + m b_2 - b_3},
      -\frac{b_1 b_3}{m(m b_2 - b_3)}
    \right).
  \end{equation}
  This solution is the case \(n = 1\) of \cref{thm:torus-knot-solutions} after the substitutions \(p = b_1, q = m b_2, r = (b_3 - m b_2)/b_1\).

  We can use \cref{thm:decoration-octahedral} to compute the boundary eigenvalues of this representation.
  We have \(\delta(\mer) = m\) and
  \[
    \ell = \delta(\lon)
    =
    m^{-3} \frac{b_2 b_4 b_6}{b_1 b_3 b_5}
    =
    -1/m^{6}
  \]
  which matches the \(A\)-polynomial \(m^{6} \ell + 1\) of the trefoil knot.
\end{example}

As before, the choice of \(b_1, b_2, b_3\) does not affect the conjugacy class of the representation, which is uniquely determined by \(m\).
In this case, we can check this against the character variety of the trefoil knot.
Non-pinched solutions correspond to representations with nonabelian image, and the \(\slg\)-character variety of the trefoil has a single nonabelian component \cite[Theorem 3.1]{Munoz2009} cut out by the equation
\[
  m^{6} \ell +1 = 0.
\]
This shows explicitly that \eqref{eq:trefoil-soln} yields at least one representation every nonabelian conjugacy class.
We can do a similar computation for general \((2,2n+1)\)-torus knots:

\begin{example}
  Consider the \(\chi\)-coloring of the \((2,N)\)-torus knot given in \cref{thm:torus-knot-solutions}.
  The longitude is
  \begin{align*}
    \delta(\lon)
    &=
    \frac{1}{m^{N}} \prod_{k=1}^{N} \frac{x_i}{y_i}
    =
    \frac{1}{m^{N}} \prod_{k=1}^{N} \frac{F_{k-1}}{m G_{k+1}} \frac{G_k}{ F_k}
    \\
    &=
    \frac{1}{m^{2N}} \frac{F_0 G_1}{ F_{N} G_{N+1}}
    =
    \frac{1}{m^{2N}} \frac{G_0 G_1}{ G_{N} G_{N+1}},
  \end{align*}
  where in the last step we used \eqref{eq:torus-poly-deriv}.
  Because \(B_N = 0\), by \cref{lemma:fibonacci} we have
  \begin{align*}
    \frac{G_0 G_1}{ G_{N} G_{N+1}}
    &=
    \frac{G_0 G_1}{
      ( G_0 B_{N-1} + G_1 B_{N}) 
      ( G_0 B_{N} + G_1 B_{N + 1}) 
    }
    \\
    &=
    \frac{1}{ B_{N-1} B_{N}}
    =
    (-1)^{N} = -1
  \end{align*}
  because \(N = 2n+1\) is odd.
  We conclude that
  \begin{equation}
    \delta(\lon) = -m^{-2N}
  \end{equation}
  which recovers the \(A\)-polynomial of the right-handed \((2,N)\)-torus knot.
\end{example}
The discrete parameter \(\Lambda\) does not appear in this computation; this corresponds to the fact \cite[Theorem 3.1]{Munoz2009} that the \(n\) nonabelian components of the character variety are all isomorphic.
Choosing \(\Lambda\) picks out a component which is then continuously parametrized by \(m + m^{-1}\).

\appendix
\printbibliography
\end{document}